\documentclass[A4paper,10pt]{article}
\usepackage{amsmath,amssymb,amsfonts,amsthm}
\usepackage{enumerate}
\usepackage{paralist}
\usepackage{graphics} %% add this and next lines if pictures should be in esp format
\usepackage{epsfig} %For pictures: screened artwork should be set up with an 85 or 100 line screen
\usepackage{graphicx}  \usepackage{epstopdf}%This is to transfer .eps figure to .pdf figure; please compile your paper using PDFLeTex or PDFTeXify.
\usepackage[colorlinks=true]{hyperref}
   % Warning: when you first run your tex file, some errors might occur,
   % please just press enter key to end the compilation process, then it will be fine if you run your tex file again.
   % Note that it is highly recommended by AIMS to use this package.
\hypersetup{urlcolor=blue, citecolor=red}

\usepackage{bm}
\usepackage{color}

  \textheight=8.2 true in
   \textwidth=5.0 true in
    \topmargin 30pt
     \setcounter{page}{1}

 % Please minimize the usage of "newtheorem", "newcommand", and use
 % equation numbers only situation when they provide essential convenience
 % Try to avoid defining your own macros

\newtheorem{theorem}{Theorem}[section]
\newtheorem{corollary}{Corollary}

\newtheorem{lemma}[theorem]{Lemma}
\newtheorem{proposition}{Proposition}

\theoremstyle{definition}
\newtheorem{definition}[theorem]{Definition}
\newtheorem{remark}{Remark}

\newcommand{\vnorm}[1]{\left\| #1 \right\|}
\newcommand{\scalarp}[1]{\left\langle #1 \right\rangle}

%% Place the running title of the paper with 40 letters or less in []
 %% and the full title of the paper in { }.
\title{(Un)conditional consensus emergence under feedback controls}

% Place all authors' names in [ ] shown as running head;
% No more than 40 letters. Leave { } empty
% Please use `and' to connect the last two names if applicable
\date{}

\begin{document}
\maketitle

\vspace{-1cm}
\centerline{\scshape Mattia Bongini and Massimo Fornasier }
\medskip
{\footnotesize
% please put the address of the first author
 \centerline{Technische Universit\"at M\"unchen, Fakult\"at Mathematik}
   \centerline{ Boltzmannstra\ss e 3, D-85748 Garching, Germany}
} % Do not forget to end the {\footnotesize by the sign }

\medskip

\centerline{\scshape Dante Kalise }
\medskip
{\footnotesize
 % please put the address of the second  and third author
 \centerline{ Radon Institute for Computational and Applied Mathematics, Austrian Academy of Sciences }
   \centerline{Altenbergerstra\ss e 69, A-4040 Linz, Austria}
}

\medskip

%The abstract of your paper
\begin{abstract}
We study the problem of consensus emergence in multi-agent systems via external feedback controllers. We consider a set of agents interacting with dynamics given by  a Cucker-Smale type of model, and study its consensus stabilization by means of centralized and decentralized control configurations. We present a characterization of consensus emergence for systems with different feedback structures, such as leader-based configurations, perturbed information feedback, and  feedback computed upon spatially confined information. We characterize consensus emergence for  this latter design as a parameter-dependent transition regime between self-regulation and centralized feedback stabilization. Numerical experiments illustrate the different features of the proposed designs.
\end{abstract}

\section{Introduction}\label{sec:first_results}

Over the last years, the study of multi-agent systems has become a topic of increasing interest in mathematics, biology, sociology, and engineering, among many other disciplines. Upon the seminal articles of Reynolds \cite{Reynolds87}, Vicsek  et al. \cite{vics} and more recently, Cucker and Smale \cite{CS}, there has been a substantial amount of mathematical works addressing from both analytical and computational perspectives, different phenomena arising in this class of systems. Multi-agent systems are usually modeled as a large-set of particles interacting under simple binary rules, such as attraction, repulsion, and alignment forces, which can depend either metrically or topologically  on the agent configuration; the wide applicability of this setting ranges from modeling the collective behavior of bird flocks \cite{Reynolds87}, to the study of data transmission over communication networks \cite{ignaciuk2013congestion}, including the description of opinion dynamics in human societies \cite{krause02}, and the formation control of platoon systems \cite{murray07,Peters}.
At a microscopic level, multi-agent systems are often represented by a large-scale set of differential (or difference) equations; in this context, it is of interest the study of asymptotic behaviors, pattern formation, self-organization phenomena, and its basins of attraction.
To make matters concrete, in this article we consider a set of $N$, $d-$dimensional agents interacting under a controlled Cucker-Smale model of the form
\begin{eqnarray}
\left\{
\begin{aligned}
\begin{split} \label{eq:cuckersmale}
\dot{\bm{x}}_{i} & = \bm{v}_{i} \\
\dot{\bm{v}}_{i} & = \frac{1}{N} \sum_{j = 1}^N a\left(r_{ij}\right)\left(\bm{v}_{j}-\bm{v}_{i}\right) +\bm{u}_i, \qquad i = 1, \ldots, N
\end{split}
\end{aligned}
\right.
\end{eqnarray}
where the pair  $(\bm{x}_i,\bm{v}_i)\in\mathbb{R}^{2d}$ represents the position and velocity of every agent, $\bm{u}_i$ is an external controller to be suitably defined later, and  $a:[0, +\infty) \rightarrow [0, +\infty)$ is a bounded, non increasing, continuous function, whereas $r_{ij}$ stands for the Euclidean distance $\vnorm{x_i - x_j}$. This kind of model has been introduced by Cucker and Smale in \cite{CS,cusm07}, for specific choices of the interaction function $a$ as
\begin{equation}\label{eq:kernel}
a(r_{ij})=\frac{1}{(1+r_{ij}^2)^{\delta}} \quad \text{for every} \quad \delta \in [0,+\infty),
\end{equation}
later generalized to arbitrary positive interaction functions in \cite{HaHaKim}. Throughout our work we stick to this more general approach; we stress that henceforth, every  result we obtain applies for \emph{any} choice of the interaction kernel $a$ that is a positive, continuous, bounded and non increasing function.

For a group of agents evolving according to  \eqref{eq:cuckersmale} , we shall be concerned with studying the asymptotic convergence of the velocity field of to a common vector, a phenomenon often defined as consensus. It is clear that if the group converges to consensus, the consensus velocity coincides with the mean velocity of the group
\begin{equation}
\overline{\bm{v}} \stackrel{\Delta}{=}  \frac{1}{N} \sum^N_{i = 1} \bm{v}_i.
\end{equation}

\begin{definition}[Consensus] \label{def:consensus}
We say that a solution $(\bm{x}(t),\bm{v}(t))$ of system \eqref{eq:cuckersmale} \emph{tends to consensus} if the consensus parameter vectors $\bm{v}_i$ tend to the mean $\overline{\bm{v}}$, i.e.,
\begin{align*}
\lim_{t \rightarrow + \infty} \vnorm{\bm{v}_i(t) - \overline{\bm{v}}(t)} = 0 \quad \text{for every } i = 1,\ldots,N.
\end{align*}
\end{definition}

To set our work in perspective, let us begin by referring to the available results concerning consensus emergence for the uncontrolled system  \eqref{eq:cuckersmale},  i.e. when $\bm{u}_i\equiv 0$. In \cite{cusm07} a first result was presented related to the parameter $\delta$ in \eqref{eq:kernel}; it asserts that for $\delta\leq 1/2$, the system will tend asymptotically to consensus, independently of its initial configuration. For $\delta>1/2$, consensus emergence will depend on the cohesion of the initial setting.  A precise characterization of this situation was obtained in \cite[Theorem 3.1]{HaHaKim}, where the authors give a sufficient condition depending on the initial configuration and the parameter $\delta$. Further results concerning variations of the original system and consensus emergence have been presented in \cite{CuGu}, where the authors study the effect of adding agents with preferred navigation directions (stubborn agents), and \cite{hahakim10rayleigh}, where a Rayleigh-type of damping is added to the dynamics. 

 In general, the aforementioned results can be interpreted, in a wider framework, as stability results for nonlinear systems around a consensus manifold. A natural extension is then to consider the case when a controller is included as in \eqref{eq:cuckersmale}, and consensus can be achieved not only by internal self-regulation, but also by means of an external action. In \cite{CFPT}, the authors consider consensus stabilization for the Cucker-Smale system by means of both feedback-based controllers and open-loop, sparse optimal control. In particular, in \cite[Proposition 2]{CFPT}  it is shown that, with a controller of the form
\begin{equation}\label{eq:feed}
\bm{u}_i=-(\bm{v}_i-\bar{\bm{v}})\,,
\end{equation}
consensus emergence can be guaranteed for any configuration and values of $\delta$. A natural drawback of such a controller relates to the fact that it is always active, requiring what we call \textsl{full information or centralized control}, i.e., for a single agent, feedback computation will make use, at every time, of the total velocity field; even if full information is available, it is also possible that some perturbation is present. A much more realistic setting relates to what is known in the literature as \textsl{decentralized control} \cite{Bakule}; in our context, it means a control design where every agent acts based on partial information as for instance, the agents around a certain metrical or topological neighborhood \cite{Ballerini,olfati}.

The aim of this article is to make a contribution along these directions. Starting from a control of the form \eqref{eq:feed}, we will study variations of the feedback structure for consensus stabilization, by covering different settings such as feedback under perturbed information, leader-following feedback, and decentralized, local feedback depending on a metrical neighborhood of the agents. In every case, we present results concerning sufficient conditions for consensus emergence. In general, these results represent a transition between consensus emergence conditions for the uncontrolled Cucker-Smale system, and the feedback stabilization result under full information presented in \cite[Proposition 2]{CFPT}.

The paper is structured as follows. In Section 2,  we present some preliminary definitions and results concerning consensus emergence in the Cucker-Smale model. In Section 3 and 4, we introduce consensus stabilization results with feedback controllers based on perturbed information. Section 5 addresses the problem of consensus emergence under local feedback. Finally, in Section 6, numerical experiments are presented, aimed at illustrating the main features of the proposed designs; in particular, we numerically investigate the sharpness of the already existing and new estimates for local feedback stabilization.

\section{Preliminaries}
In order to start studying system \eqref{eq:cuckersmale},  we introduce the following notation and terminology: given a vector $\bm{a} = (\bm{a}_1, \ldots, \bm{a}_N) \in (\mathbb{R}^d)^N$, the symbol
\begin{align*}
\bm{a}^{\perp}_i \stackrel{\Delta}{=}  \bm{a}_i - \overline{\bm{a}}
\end{align*}
shall stand for the deviation of the vector $\bm{a}_i$ with respect to the mean $\overline{\bm{a}}$. Note that
\begin{equation*}
\sum^N_{i = 1} \bm{a}^{\perp}_i = \sum^N_{i = 1} \bm{a}_i - N \overline{\bm{a}} = N \overline{\bm{a}} - N \overline{\bm{a}} = 0,
\end{equation*}
and thus, for any vector $\bm{c} \in \mathbb{R}^d$, denoting by $\scalarp{\cdot,\cdot}$ the usual scalar product on $\mathbb{R}^d$, it holds
\begin{align} \label{eq:vertequalzero}
\sum^N_{i = 1} \scalarp{\bm{a}^{\perp}_i, \bm{c}} = \scalarp{\sum^N_{i = 1}\bm{a}^{\perp}_i, \bm{c}} = 0.
\end{align}

The following calculation shall be often exploited: given a $N\times N$ matrix $\omega$ which is symmetric and with positive entries, i.e., $\omega_{ij} = \omega_{ji}$ and $\omega_{ij} > 0$, for any $\bm{a} \in (\mathbb{R}^d)^N$ we have
\begin{equation} \label{eq:maintrick}
\begin{split}
\frac{1}{N^2} \sum^N_{i = 1} \sum^N_{j = 1} \omega_{ij} \scalarp{\bm{a}_j - \bm{a}_i, \bm{a}_i} & = \frac{1}{2 N^2} \Bigg( \sum^N_{i = 1} \sum^N_{j = 1} \omega_{ij} \scalarp{\bm{a}_j - \bm{a}_i, \bm{a}_i} \\
& \quad + \sum^N_{j = 1} \sum^N_{i = 1} \omega_{ji} \scalarp{\bm{a}_i - \bm{a}_j, \bm{a}_j} \Bigg) \\
& = -\frac{1}{2 N^2}\sum^N_{j = 1} \sum^N_{i = 1} \omega_{ij} \vnorm{\bm{a}_i - \bm{a}_j}^2 \\
& \leq - \min_{i,j} \omega_{ij} \frac{1}{N} \sum^N_{i = 1}\vnorm{\bm{a}_i^{\perp}}^2. \\
\end{split}
\end{equation}
In order to characterize consensus emergence in terms of the solutions $(\bm{x}(t),\bm{v}(t))$ of the system \eqref{eq:cuckersmale}, we define the following quantities
\[
X(t)=\frac{1}{2N^2}\sum_{i,j,=1}^N||\bm{x}_i(t)-\bm{x}_j(t)||^2\,,\quad\text{and}\quad V(t)=\frac{1}{2N^2}\sum_{i,j,=1}^N||\bm{v}_i(t)-\bm{v}_j(t)||^2\,,
\]
which provide an accurate description of consensus in terms of energy of the system by measuring the spread, both in positions and velocities, of the configuration.
A first result establishing a link between consensus in the sense of Definition \ref{def:consensus} and the above introduced quantities is stated as follows.

\begin{proposition}
The following are equivalent:
\begin{enumerate}
\item $\lim_{t \rightarrow + \infty} \vnorm{\bm{v}_i(t) - \overline{\bm{v}}(t)} = 0$ for every $i = 1,\ldots,N$,
\item $\lim_{t \rightarrow + \infty} \bm{v}_i^{\perp} = 0$ for every $i = 1,\ldots,N$,
\item $\lim_{t \rightarrow + \infty} V(t) = 0$.
\end{enumerate}
\end{proposition}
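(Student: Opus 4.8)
The plan is to prove the equivalences by showing $(1) \Leftrightarrow (2)$ trivially and then $(2) \Leftrightarrow (3)$ via a double-counting argument that relates the pairwise spread $V(t)$ to the sum of squared deviations $\sum_i \vnorm{\bm{v}_i^\perp}^2$.

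\textbf{Step 1: $(1) \Leftrightarrow (2)$.} This is immediate from the definition $\bm{v}_i^\perp = \bm{v}_i - \overline{\bm{v}}$, so that $\vnorm{\bm{v}_i(t) - \overline{\bm{v}}(t)} = \vnorm{\bm{v}_i^\perp(t)}$; convergence of the norm to $0$ is the same as convergence of the vector to $0$.

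\textbf{Step 2: the key identity.} The heart of the argument is the algebraic identity
\begin{equation*}
V(t) = \frac{1}{2N^2}\sum_{i,j=1}^N \vnorm{\bm{v}_i - \bm{v}_j}^2 = \frac{1}{N}\sum_{i=1}^N \vnorm{\bm{v}_i^\perp}^2.
\end{equation*}
To see this, write $\bm{v}_i - \bm{v}_j = \bm{v}_i^\perp - \bm{v}_j^\perp$ and expand the square: $\vnorm{\bm{v}_i^\perp - \bm{v}_j^\perp}^2 = \vnorm{\bm{v}_i^\perp}^2 + \vnorm{\bm{v}_j^\perp}^2 - 2\scalarp{\bm{v}_i^\perp, \bm{v}_j^\perp}$. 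Summing over $i$ and $j$, the first two terms each contribute $N\sum_i \vnorm{\bm{v}_i^\perp}^2$, while the cross term vanishes because $\sum_{i,j}\scalarp{\bm{v}_i^\perp,\bm{v}_j^\perp} = \scalarp{\sum_i \bm{v}_i^\perp, \sum_j \bm{v}_j^\perp} = 0$ by \eqref{eq:vertequalzero} (or directly since $\sum_i \bm{v}_i^\perp = 0$). Dividing by $2N^2$ gives the identity.

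\textbf{Step 3: $(2) \Leftrightarrow (3)$.} From the identity, $V(t) = \frac{1}{N}\sum_{i=1}^N \vnorm{\bm{v}_i^\perp(t)}^2$ is a finite sum of nonnegative terms. Hence $V(t) \to 0$ if and only if each summand $\vnorm{\bm{v}_i^\perp(t)}^2 \to 0$, which is exactly $(2)$. This closes the chain of equivalences.

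I do not anticipate a genuine obstacle here: the only slightly delicate point is getting the normalization constants right and remembering to justify the vanishing of the cross term via $\sum_i \bm{v}_i^\perp = 0$; everything else is bookkeeping. One could alternatively phrase Step 2 as a special case of the computation in \eqref{eq:maintrick} with $\omega_{ij} \equiv 1$ (which, up to sign, gives $-\frac{1}{2N^2}\sum_{i,j}\vnorm{\bm{v}_i-\bm{v}_j}^2 \leq -\frac1N\sum_i\vnorm{\bm{v}_i^\perp}^2$ with equality), but the direct expansion is cleaner for an exact identity.
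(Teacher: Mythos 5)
Your proof is correct, and it follows exactly the route the paper takes implicitly: the paper states this proposition without proof, but your key identity $V(t)=\frac{1}{N}\sum_{i=1}^N\vnorm{\bm{v}_i^{\perp}(t)}^2$ is precisely the fact the authors invoke without comment in the first line of the proof of Lemma \ref{lem:bigv_growth}, and the remaining equivalences are the trivial observations you give. Nothing is missing.
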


It is thus natural to prove a sufficiently strong decay of the functional $V(t)$ in order to establish that a solution of \eqref{eq:cuckersmale} tends to consensus. On the other hand, it is well known that not every solution of system \eqref{eq:cuckersmale} tends to consensus in the sense of Definition \ref{def:consensus}; in this context, a relevant result will be the characterization of consensus emergence introduced in \cite[Theorem 3.1]{HaHaKim}, which we recall in a concise version:
\begin{theorem}\label{thm:hhk}
Let $(\bm{x}_0, \bm{v}_0) \in (\mathbb{R}^d)^N \times (\mathbb{R}^d)^N$ and set $X_0 = B(\bm{x}_0, \bm{x}_0)$ and $V_0 = B(\bm{v}_0, \bm{v}_0)$. If the following inequality is satisfied:
\begin{align}
\label{eq:HaHaKim}
\int^{+\infty}_{\sqrt{X_0}} a(\sqrt{2N} r) \ dr \geq \sqrt{V_0},
\end{align}
then the solution of \eqref{eq:cuckersmale} with initial datum $(\bm{x}_0, \bm{v}_0)$ tends to consensus.
\end{theorem}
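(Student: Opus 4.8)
The plan is to reduce the claim, via the Proposition above, to showing that $V(t)\to 0$, and to obtain this from a pair of coupled scalar differential inequalities for $X(t)$ and $V(t)$, along the lines of the standard Lyapunov-functional argument for the Cucker--Smale model (here the controller is absent, $\bm u_i\equiv 0$).

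First I would differentiate $V$. Since the alignment term is antisymmetric, $\overline{\bm v}$ is conserved, so $\tfrac{d}{dt}\bm v_i^{\perp}=\dot{\bm v}_i=\tfrac1N\sum_j a(r_{ij})(\bm v_j^{\perp}-\bm v_i^{\perp})$; writing $V=\tfrac1N\sum_i\vnorm{\bm v_i^{\perp}}^2$ and applying the identity \eqref{eq:maintrick} with the symmetric positive weights $\omega_{ij}=a(r_{ij})$ gives $\dot V\le -2\,\big(\min_{i,j}a(r_{ij})\big)\,V$. Since $a$ is non-increasing and, by the definition of $X$, $\max_{i,j}r_{ij}(t)\le\sqrt{2N}\,\sqrt{X(t)}$ (this is where the weight $\sqrt{2N}$ in \eqref{eq:HaHaKim} enters), one has $\min_{i,j}a(r_{ij})\ge a(\sqrt{2N}\sqrt{X})$, whence $\tfrac{d}{dt}\sqrt{V(t)}\le -\,a(\sqrt{2N}\sqrt{X(t)})\,\sqrt{V(t)}$ whenever $V(t)>0$. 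For $X$, differentiating and using Cauchy--Schwarz yields $|\dot X|\le 2\sqrt{XV}$, hence $\big|\tfrac{d}{dt}\sqrt{X}\big|\le\sqrt{V}$. (Global-in-time existence of the solution follows from boundedness of $a$ together with the monotonicity of $V$.)

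The key step is then to verify that the functional $\mathcal E(t):=\sqrt{V(t)}+\int_0^{\sqrt{X(t)}}a(\sqrt{2N}\,r)\,dr$ is non-increasing: indeed $\tfrac{d}{dt}\mathcal E=\tfrac{d}{dt}\sqrt{V}+a(\sqrt{2N}\sqrt{X})\,\tfrac{d}{dt}\sqrt{X}\le a(\sqrt{2N}\sqrt{X})\big(\tfrac{d}{dt}\sqrt{X}-\sqrt{V}\big)\le 0$ by the two inequalities above and $a\ge 0$ --- note that no sign information on $\tfrac{d}{dt}\sqrt{X}$ is needed. Consequently $\int_{\sqrt{X_0}}^{\sqrt{X(t)}}a(\sqrt{2N}\,r)\,dr\le\sqrt{V_0}$ for all $t\ge 0$. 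Since $\dot V\le 0$, $V(t)$ decreases to some $V_\infty\ge 0$; if $V_\infty>0$, then $\sqrt{X(t)}$ cannot remain bounded (otherwise $a(\sqrt{2N}\sqrt{X})$ would be bounded below by a positive constant --- here positivity of $a$ is essential --- forcing exponential decay of $V$), so along a sequence $t_n$ with $\sqrt{X(t_n)}\to+\infty$, passing to the limit in $\mathcal E(t_n)\le\mathcal E(0)$ and invoking \eqref{eq:HaHaKim} gives $\sqrt{V_\infty}\le\sqrt{V_0}-\int_{\sqrt{X_0}}^{+\infty}a(\sqrt{2N}\,r)\,dr\le 0$, a contradiction. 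Hence $V(t)\to 0$ and, by the Proposition, the solution tends to consensus.

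I expect the genuinely delicate point to be this last limiting argument rather than the differential inequalities, which are routine once \eqref{eq:maintrick} is available. In particular one must cover the borderline case of equality in \eqref{eq:HaHaKim}, in which $X(t)$ need not stay bounded, and rule out a strictly positive limit for $V$; the remaining care --- the crucial use of $a>0$, the mild non-smoothness of $\sqrt{V},\sqrt{X}$ at instants where $V$ or $X$ vanishes (dealt with by invariance of the consensus manifold and an absolute-continuity argument), and pinning down the exact geometric constant linking $\max_{i,j}r_{ij}$ to $\sqrt{X}$ --- is routine.
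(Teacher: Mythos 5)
Your proof is correct and follows essentially the same route as the paper: the paper itself only quotes this statement from \cite{HaHaKim}, but its proof of the generalization in Theorem \ref{th:HaHaKimExtended} is precisely your argument --- the differential inequalities $\tfrac{d}{dt}\sqrt{V}\le -a\bigl(\sqrt{2NX}\bigr)\sqrt{V}$ and $\bigl|\tfrac{d}{dt}\sqrt{X}\bigr|\le\sqrt{V}$ obtained from \eqref{eq:maintrick} and \eqref{eq:distance_est}, followed by the monotonicity of $\sqrt{V(t)}+\int_0^{\sqrt{X(t)}}a(\sqrt{2N}r)\,dr$ (phrased there as the change of variables $r=\sqrt{X(s)}$ in \eqref{eq:growthbound}) and the dichotomy between bounded $X$ (exponential decay) and unbounded $X$ (limiting argument using the hypothesis). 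Your contradiction on $V_\infty>0$ packages the endgame slightly differently from the paper's auxiliary level $\overline{X}$ and in fact treats the borderline equality case in \eqref{eq:HaHaKim} a bit more transparently, but it is the same proof.
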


In general, we can induce consensus into the system, by adding a feedback term measuring the distance from the group velocity, leading to

\begin{eqnarray}
\left\{
\begin{aligned}
\begin{split} \label{eq:cuckersmale_uniform}
\dot{\bm{x}}_{i} & = \bm{v}_{i} \\
\dot{\bm{v}}_{i} & = \frac{1}{N} \sum_{j = 1}^N a\left(r_{ij}\right)\left(\bm{v}_{j}-\bm{v}_{i}\right) + \gamma(\overline{\bm{v}} - \bm{v}_i),
\end{split}
\end{aligned}
\right.
\end{eqnarray}
where $\gamma$ is a prescribed nonnegative constant, modeling the strength of the additional alignment term. As \eqref{eq:cuckersmale_uniform} can be rewritten as \eqref{eq:cuckersmale} with the function $a(r_{ij}) + \gamma$ replacing $a(r_{ij})$, by Theorem \ref{thm:hhk} each solution of \eqref{eq:cuckersmale_uniform} tends to consensus. As pointed out in \cite{CFPT}, the main drawback of this approach, however, is that it requires that each agent has a perfect information at every instant of the whole system, a condition which is seldom met in real-life situations; it is perhaps more realistic to ask that each agent computes an approximated mean velocity vector $\overline{\bm{v}}_i$, instead of the true mean velocity of the group $\overline{\bm{v}}$. Therefore, we consider the model

\begin{eqnarray}
\left\{
\begin{aligned}
\begin{split} \label{eq:cuckersmale_local}
\dot{\bm{x}}_{i} & = \bm{v}_{i} \\
\dot{\bm{v}}_{i} & = \frac{1}{N} \sum_{j = 1}^N a\left(r_{ij}\right)\left(\bm{v}_{j}-\bm{v}_{i}\right) + \gamma(\overline{\bm{v}}_i - \bm{v}_i).
\end{split}
\end{aligned}
\right.
\end{eqnarray}

In studying under which conditions the solutions of system \eqref{eq:cuckersmale_local} tend to consensus, it is often desirable to express the approximated feedback as a combination of a term consisting on a \textsl{true information feedback}, i.e., a feedback based on the real average $\overline{\bm{v}}$, and a perturbation term. We rewrite the system \eqref{eq:cuckersmale_local} in the following form:
\begin{eqnarray}
\left\{
\begin{aligned}
\begin{split} \label{eq:cuckersmale_perturbed}
\dot{\bm{x}}_{i} & = \bm{v}_{i} \\
\dot{\bm{v}}_{i} & = \frac{1}{N} \sum_{j = 1}^N a\left(r_{ij}\right)\left(\bm{v}_{j}-\bm{v}_{i}\right) + \alpha(\overline{\bm{v}} - \bm{v}_i) + \beta \Delta_i,
\end{split}
\end{aligned}
\right.
\end{eqnarray}
where $\alpha = \alpha(t)$ and $\beta = \beta(t)$ are two nonnegative, piecewise continuous functions, and $\Delta_i$ is a time-dependent not necessarily continuous deviation acting on agent $i$; therefore, solutions in this context have to be understood in terms of weak solutions in the Carath\'eodory sense \cite{filipov} (we also refer the reader  to  \cite[Appendix]{FS13} for specific details in the context of multi-agent systems). System \eqref{eq:cuckersmale_perturbed} encompasses all the previously introduced models, as it is readily seen:
\begin{itemize}
\item if $\alpha = \beta \equiv \gamma$ and $\Delta_i =  \bm{v}_i - \overline{\bm{v}}$, or $\alpha = \beta = 0$, then we recover system \eqref{eq:cuckersmale},
\item the choices $\alpha = \gamma$, $\Delta_i = 0$ (or equivalently $\beta = 0$) yield system \eqref{eq:cuckersmale_uniform},
\item if $\alpha = \beta \equiv \gamma$ and $\Delta_i = \overline{\bm{v}}_i - \overline{\bm{v}}$ we obtain system \eqref{eq:cuckersmale_local}.
\end{itemize}

The introduction of the perturbation term in system \eqref{eq:cuckersmale_perturbed} may deeply modify  the nature of the original model: for instance, an immediate consequence is that the mean velocity of the system is, in general, no longer a conserved quantity (as it is for system \eqref{eq:cuckersmale}).

\begin{proposition} \label{prop:derivative_mean}
For system \eqref{eq:cuckersmale_perturbed}, with perturbations given by the vector $\Delta = (\Delta_1, \ldots, \Delta_N)$, we have
\begin{align*}
	\frac{d}{dt}\overline{\bm{v}} = \beta \overline{\Delta}.
\end{align*}
\end{proposition}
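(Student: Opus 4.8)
The plan is to differentiate the definition $\overline{\bm{v}}=\frac1N\sum_{i=1}^N\bm{v}_i$ directly, substitute the dynamics \eqref{eq:cuckersmale_perturbed}, and show that the two ``intrinsic'' contributions (the Cucker--Smale coupling and the true-information feedback) cancel, leaving only the average of the perturbation. Concretely, I would start from
\begin{align*}
\frac{d}{dt}\overline{\bm{v}}=\frac1N\sum_{i=1}^N\dot{\bm{v}}_i=\frac{1}{N^2}\sum_{i=1}^N\sum_{j=1}^N a\left(r_{ij}\right)\left(\bm{v}_j-\bm{v}_i\right)+\frac{\alpha}{N}\sum_{i=1}^N\left(\overline{\bm{v}}-\bm{v}_i\right)+\frac{\beta}{N}\sum_{i=1}^N\Delta_i
\end{align*}
and handle the three terms separately.

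First, the coupling term vanishes: since $r_{ij}=\vnorm{\bm{x}_i-\bm{x}_j}=r_{ji}$, the matrix $\omega_{ij}:=a(r_{ij})$ is symmetric, so relabelling the summation indices $i\leftrightarrow j$ sends $\frac{1}{N^2}\sum_{i,j}a(r_{ij})(\bm{v}_j-\bm{v}_i)$ to its own negative; this is exactly the symmetrization already exploited in \eqref{eq:maintrick}, here in its plain form (without pairing against $\bm{v}_i$). Second, $\overline{\bm{v}}-\bm{v}_i=-\bm{v}_i^{\perp}$, and since $\sum_{i=1}^N\bm{v}_i^{\perp}=0$ (the identity noted just before \eqref{eq:vertequalzero}), the term $\frac{\alpha}{N}\sum_{i=1}^N(\overline{\bm{v}}-\bm{v}_i)$ equals $-\frac{\alpha}{N}\sum_{i=1}^N\bm{v}_i^{\perp}=0$. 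Third, by the definition of the mean, $\frac{1}{N}\sum_{i=1}^N\Delta_i=\overline{\Delta}$, so the last term is precisely $\beta\overline{\Delta}$. Combining the three observations yields the claim.

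There is no real obstacle here; the only point requiring a word of care is that $\alpha$, $\beta$ are merely piecewise continuous and the $\Delta_i$ are not necessarily continuous, so that $t\mapsto(\bm{x}(t),\bm{v}(t))$ is understood as a Carath\'eodory solution. In that setting $t\mapsto\bm{v}_i(t)$ is absolutely continuous, hence so is $t\mapsto\overline{\bm{v}}(t)$, and the identity $\frac{d}{dt}\overline{\bm{v}}=\beta\overline{\Delta}$ is to be read as holding for almost every $t$ (equivalently, in integrated form $\overline{\bm{v}}(t)=\overline{\bm{v}}(0)+\int_0^t\beta(s)\overline{\Delta}(s)\,ds$). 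This is immediate once the a.e.\ computation above is in place, so the statement follows without any further difficulty.
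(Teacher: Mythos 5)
Your proof is correct and follows essentially the same route as the paper's: differentiate the mean, substitute the dynamics, kill the Cucker--Smale coupling by the $i\leftrightarrow j$ symmetry and the $\alpha$-term via $\sum_i\bm{v}_i^{\perp}=0$, leaving $\beta\overline{\Delta}$. The additional remark about the Carath\'eodory/a.e.\ interpretation is a sensible precision the paper leaves implicit.
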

\begin{proof}
\begin{equation*}
\begin{split}
	\frac{d}{dt} \overline{\bm{v}} & =  \frac{1}{N} \sum^N_{i = 1} \frac{d}{dt} \bm{v}_i \\
	& =  \frac{1}{N} \sum^N_{i = 1} \left(\frac{1}{N} \sum_{j = 1}^N a\left(r_{ij}\right)\left(\bm{v}_{j}-\bm{v}_{i}\right) + \alpha(\overline{\bm{v}} - \bm{v}_i) + \beta \Delta_i \right) \\
	& = \underbrace{\frac{1}{N^2} \sum^N_{i = 1} \sum_{j = 1}^N a\left(r_{ij}\right)\left(\bm{v}_{j}-\bm{v}_{i}\right)}_{= 0 \text{, by simmetry.}} + \underbrace{\frac{\alpha}{N} \sum^N_{i = 1} \bm{v}^{\perp}_i}_{= 0} + \frac{\beta}{N} \sum^N_{i = 1} \Delta_i \\
	& = \beta \overline{\Delta}.
\end{split}
\end{equation*}
\end{proof}

\begin{remark} \label{rem:derivative_mean}
As we have already pointed out, it is possible to recover system \eqref{eq:cuckersmale} by setting$\Delta_i =  \bm{v}_i - \overline{\bm{v}}$ whereas we can recover system \eqref{eq:cuckersmale_uniform} for the choice $\Delta_i =  0$. Note that in both cases we have $\overline{\Delta} =  0$, and therefore the mean velocity is a conserved quantity both in systems \eqref{eq:cuckersmale} and \eqref{eq:cuckersmale_uniform}.

We also highlight the fact that $\overline{\bm{v}}$ is not conserved even in the case that for every $t \geq 0,$ and for every $1 \leq i \leq N$, $\Delta_i(t) = \bm{c}$, where $\bm{c} \not = 0$, i.e., the case in which each agent makes the same error in evaluating the mean velocity.
\end{remark}

\section{General results for consensus stabilization under perturbed information}

As already pointed out in the previous section, the main strategy for studying under which assumptions a solution of system \eqref{eq:cuckersmale} tends to consensus is to obtain an estimate of the decay of the functional $V(t)$. We follow a similar approach in order to study consensus emergence for system  \eqref{eq:cuckersmale_perturbed}. We begin by proving the following lemma:

\begin{lemma} \label{lem:bigv_growth}
Let $(\bm{x}(t), \bm{v}(t))$ be a solution of system \eqref{eq:cuckersmale_perturbed}. For every $t \geq 0$, it holds
\begin{align} \label{eq:maintool}
\frac{d}{dt} V(t) \leq - 2 a\left(\sqrt{2NX(t)}\right) V(t) -2 \alpha V(t) + \frac{2 \beta}{N} \sum^N_{i = 1} \scalarp{\Delta_i(t), \bm{v}^{\perp}_i(t)}\,.
\end{align}
\end{lemma}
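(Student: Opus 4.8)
The plan is to differentiate the energy $V(t)$ directly from its pairwise definition --- which sidesteps having to differentiate the now time-dependent mean $\overline{\bm v}$ --- then substitute the dynamics \eqref{eq:cuckersmale_perturbed} term by term and estimate each of the three resulting contributions. Since $\alpha,\beta$ are only piecewise continuous and the $\Delta_i$ need not be continuous, $V$ is merely absolutely continuous, so I would read all the identities below as holding for a.e.\ $t\ge 0$; this is enough for the integral arguments that will later invoke \eqref{eq:maintool}.

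First I would reduce $\frac{d}{dt}V$ to a single sum. Differentiating $V(t)=\frac{1}{2N^2}\sum_{i,j}\vnorm{\bm v_i-\bm v_j}^2$, using that the summand $\scalarp{\bm v_i-\bm v_j,\dot{\bm v}_i-\dot{\bm v}_j}$ is invariant under $i\leftrightarrow j$ while $\bm v_i-\bm v_j$ changes sign, and that $\sum_{j=1}^N(\bm v_i-\bm v_j)=N\bm v_i^{\perp}$, I obtain
\[
\frac{d}{dt}V(t)=\frac{2}{N}\sum_{i=1}^N\scalarp{\bm v_i^{\perp},\dot{\bm v}_i}.
\]
(Alternatively one may differentiate $V=\frac1N\sum_i\vnorm{\bm v_i^{\perp}}^2$, which produces an extra term $-\frac{2\beta}{N}\scalarp{\overline{\Delta},\sum_i\bm v_i^{\perp}}$ through Proposition~\ref{prop:derivative_mean}, vanishing by \eqref{eq:vertequalzero}; either route gives the same formula.)

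Next I would insert $\dot{\bm v}_i=\frac1N\sum_j a(r_{ij})(\bm v_j-\bm v_i)+\alpha(\overline{\bm v}-\bm v_i)+\beta\Delta_i$ into this expression. The $\alpha$-term immediately gives $\frac{2\alpha}{N}\sum_i\scalarp{\bm v_i^{\perp},-\bm v_i^{\perp}}=-2\alpha V$, and the $\beta$-term is exactly $\frac{2\beta}{N}\sum_i\scalarp{\Delta_i,\bm v_i^{\perp}}$. For the interaction term I would write $\bm v_i^{\perp}=\bm v_i-\overline{\bm v}$ and note that $\sum_{i,j}a(r_{ij})\scalarp{\overline{\bm v},\bm v_j-\bm v_i}=0$ since $r_{ij}=r_{ji}$, so what remains is $\frac{2}{N^2}\sum_{i,j}a(r_{ij})\scalarp{\bm v_j-\bm v_i,\bm v_i}$, to which the symmetrization identity \eqref{eq:maintrick} applies with $\omega_{ij}=a(r_{ij})$, turning it into $-\frac1{N^2}\sum_{i,j}a(r_{ij})\vnorm{\bm v_i-\bm v_j}^2$. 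To pass to the stated coefficient I would use that $X(t)=\frac1N\sum_k\vnorm{\bm x_k^{\perp}}^2$, so by the triangle inequality $r_{ij}^2=\vnorm{\bm x_i^{\perp}-\bm x_j^{\perp}}^2\le 2\vnorm{\bm x_i^{\perp}}^2+2\vnorm{\bm x_j^{\perp}}^2\le 2NX(t)$, whence $a(r_{ij})\ge a(\sqrt{2NX(t)})$ by monotonicity of $a$; this bounds the interaction term by $-2a(\sqrt{2NX(t)})V(t)$, and summing the three pieces gives \eqref{eq:maintool}.

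I do not expect any genuine obstacle here: \eqref{eq:maintool} is a Gr\"onwall-type differential inequality for the velocity spread. The only two points that demand care are obtaining the \emph{sharp} radius estimate $r_{ij}\le\sqrt{2NX}$ (rather than the cruder $r_{ij}\le N\sqrt{2X}$), which is precisely what yields the coefficient $a(\sqrt{2NX})$ consistent with Theorem~\ref{thm:hhk}, and keeping track of the two spurious cross-terms --- the one pairing $\overline{\bm v}$ against the symmetric kernel, and (in the alternative computation) the one involving $\overline{\Delta}$ --- both of which cancel by the symmetry of $a$ and by \eqref{eq:vertequalzero}.
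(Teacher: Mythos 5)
Your proposal is correct and follows essentially the same route as the paper: insert the dynamics into $\frac{d}{dt}V$, cancel the mean-velocity and $\overline{\Delta}$ cross-terms, apply the symmetrization identity \eqref{eq:maintrick} to the interaction sum, and use $r_{ij}\le\sqrt{2NX}$ with the monotonicity of $a$ to obtain the coefficient $-2a(\sqrt{2NX})$. The only cosmetic difference is that you differentiate the pairwise form of $V$ directly (avoiding Proposition~\ref{prop:derivative_mean}), whereas the paper differentiates $\frac1N\sum_i\vnorm{\bm{v}_i^{\perp}}^2$ and cancels the resulting $\overline{\Delta}$ term via \eqref{eq:vertequalzero} --- a variant you already note is equivalent.
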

\begin{proof}
Differentiating $V$ for every $t \geq 0,$ we have
\begin{eqnarray*}
	\frac{d}{dt} V(t) & = &  \frac{d}{dt} \frac{1}{N} \sum^N_{i = 1} \vnorm{\bm{v}^{\perp}_i(t)}^2 \\
	& = & \frac{2}{N} \sum^N_{i = 1} \scalarp{\frac{d}{dt} \bm{v}^{\perp}_i(t), \bm{v}^{\perp}_i(t)} \\
	& = & \frac{2}{N} \sum^N_{i = 1} \scalarp{\frac{d}{dt} \bm{v}_i(t), \bm{v}^{\perp}_i(t)} - \frac{2}{N} \sum^N_{i = 1} \scalarp{\frac{d}{dt} \overline{\bm{v}}(t), \bm{v}^{\perp}_i(t)},
\end{eqnarray*}
which, inserting the expression for $\frac{d}{dt} \bm{v}_i(t)$, yields
\begin{eqnarray} \label{eq:derivative_bigv}
\begin{split}
	\frac{d}{dt} V(t) & = \underbrace{\frac{2}{N^2} \sum^N_{i = 1} \sum^N_{j = 1} a\left(r_{ij}\right)\scalarp{\bm{v}_{j}(t)-\bm{v}_{i}(t), \bm{v}^{\perp}_i(t)}}_{(i)} + \\
	& \quad + \frac{2 \alpha}{N} \sum^N_{i = 1} \scalarp{\overline{\bm{v}}(t) - \bm{v}_i(t), \bm{v}^{\perp}_i(t)} + \frac{2 \beta}{N} \sum^N_{i = 1} \scalarp{\Delta_i(t), \bm{v}^{\perp}_i(t)} \\
	& \quad - \frac{2}{N} \sum^N_{i = 1} \scalarp{\frac{d}{dt} \overline{\bm{v}}(t), \bm{v}^{\perp}_i(t)}.
\end{split}
\end{eqnarray}
Since
\begin{eqnarray} \label{eq:distance_est}
\begin{split}
r_{ij} = \vnorm{\bm{x}_i - \bm{x}_j} & = \vnorm{\bm{x}_i^{\perp} - \bm{x}_j^{\perp}} \\
& \leq \vnorm{\bm{x}_i^{\perp}} + \vnorm{\bm{x}_j^{\perp}} \\
& \leq \sqrt{2} \left( \sum^N_{k = 1} \vnorm{\bm{x}_k^{\perp}}\right)^{\frac{1}{2}} \\
& \leq \sqrt{2 N X},
\end{split}
\end{eqnarray}
the fact that $a$ is non increasing and inequality \eqref{eq:maintrick} yield
\begin{align} \label{eq:lemma1_CFPT}
(i) \leq - 2 a\left(\sqrt{2NX(t)}\right) V(t).
\end{align}
Using Proposition \ref{prop:derivative_mean}, we can rewrite the remaining term as
\begin{equation} \label{eq:controlterm}
\begin{split}
	\frac{d}{dt} V(t) - (i) & = -\frac{2 \alpha}{N} \sum^N_{i = 1} \vnorm{\bm{v}^{\perp}_i(t)}^2 + \frac{2 \beta}{N} \sum^N_{i = 1} \scalarp{\Delta_i(t), \bm{v}^{\perp}_i(t)} - \underbrace{\frac{2\beta}{N} \sum^N_{i = 1} \scalarp{\overline{\Delta}(t), \bm{v}^{\perp}_i(t)}}_{= 0}\\
	& = -2 \alpha V(t) + \frac{2 \beta}{N} \sum^N_{i = 1} \scalarp{\Delta_i(t), \bm{v}^{\perp}_i(t)}
\end{split}
\end{equation}
Applying \eqref{eq:lemma1_CFPT} and \eqref{eq:controlterm} on \eqref{eq:derivative_bigv} concludes the proof.
\end{proof}
As a direct consequence we obtain the following theorem.

\begin{theorem} \label{th:perpbound_convergence}
Let $(\bm{x}(t), \bm{v}(t))$ be a solution of system \eqref{eq:cuckersmale_perturbed}, and suppose that there exists a $T \geq 0$ such that for every $t \geq T$,
\begin{align} \label{eq:smallerror}
\sum^N_{i = 1} \scalarp{\Delta_i(t), \bm{v}^{\perp}_i(t)} \leq \phi(t) \sum^N_{i = 1} \vnorm{\bm{v}^{\perp}_i(t)}^2
\end{align}
for some function $\phi:[T,+\infty) \rightarrow [0,\ell]$, where $\ell<\frac{\alpha}{\beta}$. Then $(\bm{x}(t), \bm{v}(t))$ tends to consensus.
\end{theorem}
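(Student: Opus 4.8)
The plan is to insert the smallness condition \eqref{eq:smallerror} directly into the differential inequality \eqref{eq:maintool} of Lemma \ref{lem:bigv_growth} and then close the argument by a Grönwall estimate. Fix $t \geq T$. Recalling the identity $\sum_{i=1}^N \vnorm{\bm{v}_i^\perp(t)}^2 = N\,V(t)$ already used in the proof of Lemma \ref{lem:bigv_growth}, the hypothesis \eqref{eq:smallerror} together with the bound $\phi(t) \leq \ell$ gives
\begin{equation*}
\frac{2\beta}{N}\sum_{i=1}^N \scalarp{\Delta_i(t),\bm{v}_i^\perp(t)} \;\leq\; 2\beta\,\phi(t)\,V(t) \;\leq\; 2\beta\ell\,V(t),
\end{equation*}
so that \eqref{eq:maintool} yields, for every $t \geq T$,
\begin{equation*}
\frac{d}{dt} V(t) \;\leq\; -2 a\bigl(\sqrt{2NX(t)}\bigr) V(t) \;-\; 2(\alpha - \beta\ell)\,V(t).
\end{equation*}

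The crucial observation — and the reason why, unlike the uncontrolled case, no cohesion assumption on the position spread of the type \eqref{eq:HaHaKim} is required — is that $a$ is nonnegative, so the first term on the right-hand side may simply be discarded. Setting $\lambda := \alpha - \beta\ell$, which is strictly positive precisely because $\ell < \alpha/\beta$, we are left with $\frac{d}{dt}V(t) \leq -2\lambda V(t)$ for $t \geq T$; hence by Grönwall's inequality $V(t) \leq V(T)\,e^{-2\lambda(t-T)}$ for all $t \geq T$, and in particular $V(t) \to 0$ as $t \to +\infty$. By the Proposition equating $V(t)\to 0$ with the convergence required in Definition \ref{def:consensus}, this is exactly the statement that $(\bm{x}(t),\bm{v}(t))$ tends to consensus.

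I do not expect any real obstacle here: the content of the proof is the sign bookkeeping above, carried out under the standing assumption $\beta > 0$ that makes $\alpha/\beta$ meaningful (if $\beta = 0$ the system reduces to \eqref{eq:cuckersmale_uniform}, for which consensus is already known). The only point worth a moment's care is that the hypothesis asks for $\phi$ to be bounded by a \emph{constant} $\ell$ strictly below $\alpha/\beta$, and not merely for $\phi(t) < \alpha/\beta$ to hold pointwise: a pointwise strict inequality with $\phi(t)$ approaching $\alpha/\beta$ sufficiently fast would make $\int^{+\infty}(\alpha - \beta\phi(t))\,dt$ converge and would not, by itself, force $V(t) \to 0$.
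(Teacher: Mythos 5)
Your proof is correct and follows essentially the same route as the paper: both insert the hypothesis \eqref{eq:smallerror} into \eqref{eq:maintool}, discard the nonnegative alignment term $2a(\sqrt{2NX(t)})V(t)$, bound $\phi$ by $\ell$, and conclude by Gr\"onwall that $V(t)\leq V(T)e^{-2(\alpha-\beta\ell)(t-T)}\to 0$. The closing remarks on $\beta>0$ and on why a uniform bound $\ell<\alpha/\beta$ (rather than a pointwise strict inequality) is needed are sound, though not part of the paper's argument.
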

\begin{proof}
Under the assumption \eqref{eq:smallerror}, for every $t \geq T$ the upper bound in \eqref{eq:maintool} can be simplified to
\begin{equation*}
\begin{split}
\frac{d}{dt}V(t) & \leq -2 \alpha V(t) + \frac{2 \beta}{N} \sum^N_{i = 1} \scalarp{\Delta_i(t), \bm{v}^{\perp}_i(t)} \\
& \leq -2 \alpha V(t) + \frac{2 \beta}{N} \phi(t) \sum^N_{i = 1} \vnorm{\bm{v}^{\perp}_i(t)}^2 \\
& = -2 \alpha V(t) + 2 \beta \phi(t) V(t) \\
& \leq 2 \beta \left(\ell - \frac{\alpha}{\beta}\right) V(t).
\end{split}
\end{equation*}
Integrating between $T$ and $t$ (where $t \geq T$) we get
\begin{equation*}
V(t) \leq V(T) e^{2 \beta \left(\ell - \frac{\alpha}{\beta}\right)(t - T)}
\end{equation*}
and as the factor $\ell - \frac{\alpha}{\beta}$ is negative, $V(t)$ approaches $0$ exponentially fast.
\end{proof}

\begin{corollary} \label{cor:noperp_convergence}
If there exists $T \geq 0$ such that $\Delta^{\perp}_i(t) = 0$ for every $t \geq T$ and for every $1 \leq i \leq N$, then any solution of system \eqref{eq:cuckersmale_perturbed} tends to consensus.
\end{corollary}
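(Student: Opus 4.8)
The plan is to reduce this directly to Theorem \ref{th:perpbound_convergence}, by checking that its hypothesis \eqref{eq:smallerror} holds with the trivial choice $\phi \equiv 0$. First I would decompose each perturbation around its mean, $\Delta_i(t) = \Delta_i^\perp(t) + \overline{\Delta}(t)$, and substitute this into the left-hand side of \eqref{eq:smallerror}:
\begin{equation*}
\sum_{i=1}^N \scalarp{\Delta_i(t), \bm{v}_i^\perp(t)} = \sum_{i=1}^N \scalarp{\Delta_i^\perp(t), \bm{v}_i^\perp(t)} + \scalarp{\overline{\Delta}(t),\, \textstyle\sum_{i=1}^N \bm{v}_i^\perp(t)}.
\end{equation*}
The second term is identically zero by \eqref{eq:vertequalzero} (taken with $\bm{a} = \bm{v}$ and the constant vector $\bm{c} = \overline{\Delta}(t)$), and the first term vanishes for every $t \geq T$ because there $\Delta_i^\perp(t) = 0$ for each $i$. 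Hence $\sum_{i=1}^N \scalarp{\Delta_i(t), \bm{v}_i^\perp(t)} = 0$ for all $t \geq T$, so \eqref{eq:smallerror} holds on $[T, +\infty)$ with $\phi \equiv 0$; since $\phi \equiv 0$ maps into $[0,\ell]$ for any $\ell$ with $0 < \ell < \alpha/\beta$, Theorem \ref{th:perpbound_convergence} applies and $(\bm{x}(t),\bm{v}(t))$ tends to consensus, in fact with $V(t) \leq V(T)e^{-2\alpha(t-T)}$ for $t \geq T$.

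If one prefers a self-contained argument that does not invoke Theorem \ref{th:perpbound_convergence}, the same identity $\sum_{i=1}^N \scalarp{\Delta_i(t),\bm{v}_i^\perp(t)} = 0$ (valid for $t \geq T$) can be fed straight into the estimate \eqref{eq:maintool} of Lemma \ref{lem:bigv_growth}; discarding the nonpositive term $-2a(\sqrt{2NX(t)})V(t)$ leaves $\frac{d}{dt}V(t) \leq -2\alpha V(t)$ on $[T,+\infty)$, and integrating (a Gr\"onwall argument) forces $V(t)\to 0$, which by the equivalence recalled in Section 2 is exactly consensus in the sense of Definition \ref{def:consensus}.

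I do not anticipate any genuine difficulty here: the whole content is the observation that the perturbation enters the energy estimate \eqref{eq:maintool} only through the quantity $\sum_i \scalarp{\Delta_i,\bm{v}_i^\perp}$, and \eqref{eq:vertequalzero} shows this quantity depends on $\Delta$ solely through its deviations $\Delta_i^\perp$, so a deviation-free perturbation is invisible to the decay of $V$. The only subtlety worth flagging in the write-up is that $\overline{\bm{v}}$ is no longer conserved (it drifts as $\frac{d}{dt}\overline{\bm{v}} = \beta\overline{\Delta}$ by Proposition \ref{prop:derivative_mean}); but this drift is already built into the definition of $\bm{v}_i^\perp$, hence into both Theorem \ref{th:perpbound_convergence} and Lemma \ref{lem:bigv_growth}, so nothing extra is needed.
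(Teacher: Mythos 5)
Your proof is correct and follows essentially the same route as the paper: both reduce the claim to Theorem \ref{th:perpbound_convergence} with $\phi \equiv 0$ by showing that $\sum_{i}\scalarp{\Delta_i,\bm{v}_i^{\perp}}$ vanishes once $\Delta_i^{\perp}=0$, via the identity \eqref{eq:vertequalzero} applied to $\overline{\Delta}$. The only cosmetic difference is that the paper writes $\Delta_i=\overline{\Delta}$ directly rather than decomposing $\Delta_i=\Delta_i^{\perp}+\overline{\Delta}$; your added remarks on the explicit decay rate and the non-conservation of $\overline{\bm{v}}$ are accurate but not needed.
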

\begin{proof}
Noting that $\Delta^{\perp}_i = 0$ implies $\Delta_i = \overline{\Delta}$, we have, by \eqref{eq:vertequalzero}
\begin{align*}
\sum^N_{i = 1} \scalarp{\Delta_i(t), \bm{v}^{\perp}_i(t)} = \sum^N_{i = 1} \scalarp{\overline{\Delta}, \bm{v}^{\perp}_i(t)} = 0.
\end{align*}
We apply Theorem \ref{th:perpbound_convergence} with $\phi(t) \equiv 0$ for every $t \geq T$ and obtain the result.
\end{proof}

\begin{remark}
Corollary \ref{cor:noperp_convergence} implies trivially that any solution of system \eqref{eq:cuckersmale_uniform} tends to consensus (this was already a consequence of Theorem \ref{thm:hhk}), but has moreover a rather nontrivial implication: also any solution of systems subjected to \emph{deviated} uniform control, i.e., systems like \eqref{eq:cuckersmale_perturbed} where $\Delta_i(t) = \Delta(t)$ for every $1 \leq i \leq N$ and for every $t \geq 0$, tend to consensus, because for every $1 \leq i \leq N$ and for every $t \geq 0$, we have
\begin{align*}
\begin{split}
\Delta^{\perp}_i(t) = \Delta_i(t) - \frac{1}{N}\sum^N_{j = 1} \Delta_j(t) = \Delta(t) - \Delta(t) = 0,
\end{split}
\end{align*}
and thus Corollary \ref{cor:noperp_convergence} applies. This means that systems of this kind converge to consensus even if the agents have an incorrect knowledge of the mean velocity, provided they all possess the same deviation. 
\end{remark}
A final consequence of the previously developed results is the following theorem, which provides  an upper bound for tolerable perturbations under which consensus emergence can be unconditionally guaranteed.
\begin{theorem}
Let $\varepsilon_i: [0, +\infty) \rightarrow [0,\ell]$ for a fixed $\ell < \frac{\alpha}{\beta}$, for every $i = 1, \ldots, N$. If there exists $T \geq 0$ such that $\vnorm{\Delta_i(t)} \leq \varepsilon_i(t) \vnorm{\bm{v}_i^{\perp}(t)}$ for every $t \geq T$ and for every $1 \leq i \leq N$, then any solution of system \eqref{eq:cuckersmale_perturbed} tends to consensus.
\end{theorem}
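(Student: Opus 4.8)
The plan is to obtain this statement as an immediate consequence of Theorem~\ref{th:perpbound_convergence}: it suffices to verify that the standing hypothesis $\vnorm{\Delta_i(t)} \leq \varepsilon_i(t)\vnorm{\bm{v}_i^{\perp}(t)}$ forces condition \eqref{eq:smallerror} to hold with an admissible function $\phi$.

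First, I would fix $t \geq T$ and $i \in \{1,\ldots,N\}$ and apply the Cauchy--Schwarz inequality together with the assumed bound on $\Delta_i$:
\[
\scalarp{\Delta_i(t), \bm{v}^{\perp}_i(t)} \leq \vnorm{\Delta_i(t)}\,\vnorm{\bm{v}^{\perp}_i(t)} \leq \varepsilon_i(t)\vnorm{\bm{v}^{\perp}_i(t)}^2 \leq \ell\,\vnorm{\bm{v}^{\perp}_i(t)}^2 .
\]
Summing this over $i = 1, \ldots, N$ gives
\[
\sum^N_{i = 1} \scalarp{\Delta_i(t), \bm{v}^{\perp}_i(t)} \leq \ell \sum^N_{i = 1} \vnorm{\bm{v}^{\perp}_i(t)}^2 ,
\]
which is exactly \eqref{eq:smallerror} with the constant choice $\phi(t) \equiv \ell$. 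Since $\phi$ takes values in $[0,\ell]$ and $\ell < \frac{\alpha}{\beta}$ by hypothesis, Theorem~\ref{th:perpbound_convergence} applies and yields that $(\bm{x}(t),\bm{v}(t))$ tends to consensus (indeed with $V(t)$ decaying exponentially).

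There is no real obstacle here: the theorem is a corollary of Theorem~\ref{th:perpbound_convergence}, and the only points deserving a moment's attention are that the pointwise estimate on $\vnorm{\Delta_i}$ is paired with the \emph{same index} $\bm{v}_i^{\perp}$, so no index mismatch arises when passing the scalar-product bound under the sum, and that the constant $\ell$ appearing in the hypothesis is precisely the one required by Theorem~\ref{th:perpbound_convergence}, so the strict inequality $\ell < \frac{\alpha}{\beta}$ transfers verbatim. One could equally well take the sharper $\phi(t) = \max_{1\le i\le N}\varepsilon_i(t)$, but the crude choice $\phi \equiv \ell$ already closes the argument.
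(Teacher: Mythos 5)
Your proof is correct and follows exactly the same route as the paper: apply Cauchy--Schwarz pointwise to get $\scalarp{\Delta_i,\bm{v}^{\perp}_i}\leq\varepsilon_i\vnorm{\bm{v}^{\perp}_i}^2\leq\ell\vnorm{\bm{v}^{\perp}_i}^2$, sum over $i$, and invoke Theorem~\ref{th:perpbound_convergence} with $\phi\equiv\ell$. Nothing is missing.
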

\begin{proof}
By using the Cauchy-Schwarz inequality we have
\begin{equation*}
\begin{split}
\sum^N_{i = 1} \scalarp{\Delta_i(t), \bm{v}^{\perp}_i(t)} & \leq \sum^N_{i = 1} \varepsilon_i(t) \vnorm{\bm{v}^{\perp}_i(t)}^2 \\
& \leq \ell \sum^N_{i = 1} \vnorm{\bm{v}^{\perp}_i(t)}^2. \\
\end{split}
\end{equation*}
The conclusion follows by taking $\phi(t) \equiv \ell$ for $t \in [T, +\infty)$ in Theorem \ref{th:perpbound_convergence}.
\end{proof}

\begin{remark}
The result above shows that, provided that the magnitude of the perturbation is smaller than the one of the deviation of the agent velocity from the mean, then convergence to consensus is obtained unconditionally with respect to the initial condition. This is the case of local estimations of the average, as the largest error that an agent can make when estimating the group average upon a subset of agents is precisely its own deviation from the mean, $\bm{v}^{\perp}_i$.
\end{remark}

\section{Perturbations as linear combinations of velocity deviations} \label{sec:cuckersmale_localmean}

We begin this section by considering a simple case study, which is nevertheless relevant as it addresses consensus stabilization based on a leader-following feedback.

Let us use Lemma \ref{lem:bigv_growth} to study the convergence to consensus of a system like \eqref{eq:cuckersmale_local}, where each agent computes its local mean velocity $\overline{\bm{v}}_i$ by taking into account themselves and only a single common agent $(x_1, v_1)$, which in turn takes into account only itself by computing $\overline{\bm{v}}_1 = \bm{v}_1$. Formally, given two finite conjugate exponents $p, q$ (i.e., two positive real numbers satisfying $\frac{1}{p} + \frac{1}{q} = 1$), we assume that for any $i = 1, \ldots, N$
\begin{align*}
\overline{\bm{v}}_i(t) = \frac{1}{p}\bm{v}_i(t) + \frac{1}{q}\bm{v}_1(t).
\end{align*}
We shall prove that any solution of this system tends to consensus, no matter how small the weight $\frac{1}{q}$ of $\bm{v}_1$ in $\overline{\bm{v}}_i$ is. We start by writing the system under the form \eqref{eq:cuckersmale_perturbed}, with $\alpha = \beta = \gamma$ and
\begin{align*}
\Delta_i(t) = \frac{1}{p}\bm{v}^{\perp}_i(t) + \frac{1}{q}\bm{v}^{\perp}_1(t).
\end{align*}
Hence, the perturbation term in \eqref{eq:maintool} is thus
\begin{equation*}
\begin{split}
	\frac{2 \gamma}{N}\sum^N_{i = 1} \scalarp{\Delta_i(t), \bm{v}^{\perp}_i(t)} & =  \frac{2 \gamma}{N}\sum^N_{i = 1} \scalarp{\frac{1}{p}\bm{v}^{\perp}_i(t) + \frac{1}{q}\bm{v}^{\perp}_1(t), \bm{v}^{\perp}_i(t)}\\
	& = \frac{1}{p} \frac{2 \gamma}{N} \sum^N_{i = 1} \vnorm{\bm{v}_i^{\perp}(t)}^2 + \frac{1}{q} \frac{2 \gamma}{N} \scalarp{\bm{v}^{\perp}_1(t), \underbrace{\sum^N_{i = 1} \bm{v}^{\perp}_i(t)}_{= 0}} \\
	& = \frac{2 \gamma}{p} V(t)\,,
\end{split}
\end{equation*}
and Lemma \ref{lem:bigv_growth} let us bound the growth of $V(t)$ as
\begin{align*}
	\frac{d}{dt}V(t) \leq 2 \gamma \left(-1 + \frac{1}{p}\right) V(t) = -\frac{2 \gamma}{q} V(t)\,.
\end{align*}
This ensures the exponential decay of the functional $V(t)$ for any $q > 0$.

Motivated by the latter configuration,  we turn our attention to the study of systems like \eqref{eq:cuckersmale_perturbed} where the perturbation of the mean of the $i$-th agent has the form
\begin{align} \label{eq:pert_cucker}
\Delta_i(t) = \sum^N_{j = 1} \omega_{ij}(t) \bm{v}^{\perp}_j(t).
\end{align}
We shall see that the results obtained in Section \ref{sec:first_results} help us identify under which assumptions on the coefficients $\omega_{ij}$ we can infer unconditional convergence to consensus.

\begin{theorem} \label{th:suff_cond_consensus}
Consider a system of the form \eqref{eq:cuckersmale_perturbed}, where $\Delta_i$ is given as in \eqref{eq:pert_cucker}. Then, if for every $t \geq 0$ and every $i,j = 1, \ldots, N$ we have $\omega_{ij}(t) = \omega_{ji}(t)$, and we set
\begin{align*}
I(t) := \min_{i,j} \omega_{ij}(t) \text{ and } S(t) := \max_{i} \sum^N_{j = 1}\omega_{ij}(t),
\end{align*}
the following estimate holds:
\begin{align} \label{eq:decayperp}
\frac{d}{dt}V(t) \leq - 2 a\left(\sqrt{2NX(t)}\right) V(t) + 2\beta \left( S(t) - NI(t) - \frac{\alpha}{\beta} \right) V(t).
\end{align}
Therefore, if there exists a $T \geq 0$ such that the quantity $S(t) - NI(t) - \frac{\alpha}{\beta}$ is bounded from above by a constant $C < 0$ in $[T, +\infty)$, then any solution of the system tends to consensus.
\end{theorem}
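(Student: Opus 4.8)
The plan is to start from the general estimate of Lemma~\ref{lem:bigv_growth} and to control the perturbation term $\frac{2\beta}{N}\sum_{i=1}^N\scalarp{\Delta_i(t),\bm{v}_i^{\perp}(t)}$ for the specific structure \eqref{eq:pert_cucker}. Substituting \eqref{eq:pert_cucker} gives
\[
\sum_{i=1}^N \scalarp{\Delta_i(t),\bm{v}_i^{\perp}(t)} = \sum_{i=1}^N\sum_{j=1}^N \omega_{ij}(t)\scalarp{\bm{v}_j^{\perp}(t),\bm{v}_i^{\perp}(t)},
\]
and I would then use the elementary splitting $\scalarp{\bm{v}_j^{\perp},\bm{v}_i^{\perp}} = \scalarp{\bm{v}_j^{\perp}-\bm{v}_i^{\perp},\bm{v}_i^{\perp}} + \vnorm{\bm{v}_i^{\perp}}^2$ to separate a ``difference'' contribution from a ``diagonal'' one. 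Since $\bm{v}_j^{\perp}-\bm{v}_i^{\perp}=\bm{v}_j-\bm{v}_i$ and the deviations have zero mean (so that $(\bm{v}_i^{\perp})^{\perp}=\bm{v}_i^{\perp}$), the symmetry hypothesis $\omega_{ij}(t)=\omega_{ji}(t)$ is exactly what is needed to apply \eqref{eq:maintrick} with $\bm{a}_i=\bm{v}_i^{\perp}$ to the difference contribution, yielding $\sum_{i,j}\omega_{ij}(t)\scalarp{\bm{v}_j^{\perp}-\bm{v}_i^{\perp},\bm{v}_i^{\perp}}\le -N^2 I(t)\,V(t)$, where I also use the standard identity $V(t)=\frac{1}{N}\sum_i\vnorm{\bm{v}_i^{\perp}}^2$.

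For the diagonal contribution I would bound, termwise and irrespective of the sign of the row sums (legitimate since $\vnorm{\bm{v}_i^{\perp}}^2\ge 0$),
\[
\sum_{i=1}^N\Big(\sum_{j=1}^N\omega_{ij}(t)\Big)\vnorm{\bm{v}_i^{\perp}(t)}^2 \le S(t)\sum_{i=1}^N\vnorm{\bm{v}_i^{\perp}(t)}^2 = N\,S(t)\,V(t).
\]
Adding the two contributions gives $\sum_i\scalarp{\Delta_i,\bm{v}_i^{\perp}}\le N\big(S(t)-NI(t)\big)V(t)$, so $\frac{2\beta}{N}\sum_i\scalarp{\Delta_i,\bm{v}_i^{\perp}}\le 2\beta\big(S(t)-NI(t)\big)V(t)$; inserting this into \eqref{eq:maintool} together with the $-2\alpha V(t)$ term and factoring out $\alpha/\beta$ produces exactly \eqref{eq:decayperp}.

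For the convergence claim, assuming $S(t)-NI(t)-\frac{\alpha}{\beta}\le C<0$ on $[T,+\infty)$, I would discard the nonpositive term $-2a(\sqrt{2NX(t)})V(t)$ in \eqref{eq:decayperp} to obtain $\frac{d}{dt}V(t)\le 2\beta C\,V(t)$ for $t\ge T$, integrate between $T$ and $t$ to get $V(t)\le V(T)e^{2\beta C(t-T)}$, and conclude that $V(t)\to 0$ exponentially (as in the proof of Theorem~\ref{th:perpbound_convergence}); consensus in the sense of Definition~\ref{def:consensus} then follows from the equivalence between $\lim_t V(t)=0$ and consensus established at the beginning of Section~2. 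Here one should read $\beta$ as a positive constant — or at least as bounded away from $0$ — consistently with the ratio $\alpha/\beta$ appearing in the statement.

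The only genuinely delicate point is the bookkeeping after the splitting $\scalarp{\bm{v}_j^{\perp},\bm{v}_i^{\perp}}=\scalarp{\bm{v}_j^{\perp}-\bm{v}_i^{\perp},\bm{v}_i^{\perp}}+\vnorm{\bm{v}_i^{\perp}}^2$: recognizing that the difference part falls exactly under the hypotheses of \eqref{eq:maintrick} (this is where the symmetry of $\omega$ enters and produces the $-NI(t)$ term), while the diagonal part is precisely what the row-sum bound $S(t)$ controls. Everything else is a routine substitution into Lemma~\ref{lem:bigv_growth} followed by a Gr\"onwall-type integration.
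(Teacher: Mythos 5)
Your proposal is correct and follows essentially the same route as the paper's proof: the same splitting $\scalarp{\bm{v}_j^{\perp},\bm{v}_i^{\perp}}=\scalarp{\bm{v}_j^{\perp}-\bm{v}_i^{\perp},\bm{v}_i^{\perp}}+\vnorm{\bm{v}_i^{\perp}}^2$, the same application of \eqref{eq:maintrick} to extract $-NI(t)$, and the same row-sum bound giving $S(t)$, followed by insertion into \eqref{eq:maintool}. The concluding Gr\"onwall step, which the paper leaves implicit by analogy with Theorem \ref{th:perpbound_convergence}, is handled correctly as well.
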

\begin{proof}
Standard calculations yield
\begin{equation*}
\begin{split}
\sum^N_{i = 1} \scalarp{\Delta_i(t), \bm{v}^{\perp}_i(t)} & = \sum^N_{i = 1} \sum^N_{j = 1} \omega_{ij}(t) \scalarp{\bm{v}^{\perp}_j(t), \bm{v}^{\perp}_i(t)}\\
& = \sum^N_{i = 1} \sum^N_{j = 1} \omega_{ij}(t) \scalarp{\bm{v}^{\perp}_j(t) - \bm{v}^{\perp}_i(t), \bm{v}^{\perp}_i(t)} \\
& \quad + \sum^N_{i = 1} \left( \sum^N_{j = 1} \omega_{ij}(t) \right) \vnorm{\bm{v}^{\perp}_i(t)}^2\\
& = N^2 \left( -\frac{1}{2N^2} \sum^N_{i = 1} \sum^N_{j = 1} \omega_{ij}(t) \vnorm{\bm{v}_j(t) - \bm{v}_i(t)}^2\right) \\
& \quad + \sum^N_{i = 1} \left( \sum^N_{j = 1} \omega_{ij}(t) \right) \vnorm{\bm{v}^{\perp}_i(t)}^2\\
& \leq N \left(- NI(t)  + S(t) \right) V(t),
\end{split}
\end{equation*}
having used the equality \eqref{eq:maintrick}. Applying this into \eqref{eq:maintool} and collecting $\beta$, we get  \eqref{eq:decayperp}.
\end{proof}

In the following results we shall assume $\sum^N_{j = 1} \omega_{ij}(t) = 1$, which implies by Proposition \ref{prop:derivative_mean} that $\overline{\Delta} =0$ and that $\overline{\bm{v}}$ is conserved. In particular, for $\alpha = \beta$ the system can be eventually rewritten as a Cucker-Smale system of the
type \eqref{eq:cuckersmale}, with a different interaction function $a$, and the following results can be seen as consequences of  Theorem \ref{thm:hhk}.

\begin{corollary}
Let, for any $t \geq 0$, $\omega(t) \in [0,+\infty)^{N \times N}$ be a symmetric stochastic matrix, i.e., $\sum^N_{j = 1} \omega_{ij}(t) = 1$ for every $i = 1, \ldots, N$. If there exists a $\vartheta > 0$ such that:
\begin{align*}
I(t) = \min_{i,j} \omega_{ij}(t) \geq \vartheta > \frac{\beta - \alpha}{N \beta},
\end{align*}
then any solution of the system tends to consensus.
\end{corollary}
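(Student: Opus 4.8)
The plan is to obtain the statement as an immediate specialization of Theorem~\ref{th:suff_cond_consensus}. The corollary concerns a system of the form \eqref{eq:cuckersmale_perturbed} with $\Delta_i$ given by \eqref{eq:pert_cucker} and with $\omega(t)$ symmetric and nonnegative, so the structural hypotheses of Theorem~\ref{th:suff_cond_consensus} are met and the decay estimate \eqref{eq:decayperp} is at our disposal. The first observation is that stochasticity pins down the quantity $S(t)$ appearing there: since $\sum_{j = 1}^N \omega_{ij}(t) = 1$ for every $i = 1, \ldots, N$, we have $S(t) = \max_i \sum_{j = 1}^N \omega_{ij}(t) = 1$ for all $t \geq 0$.

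Next I would feed in the lower bound on $I(t)$. By assumption $I(t) = \min_{i,j} \omega_{ij}(t) \geq \vartheta$, hence $S(t) - N I(t) - \frac{\alpha}{\beta} \leq 1 - N\vartheta - \frac{\alpha}{\beta} =: C$ for every $t \geq 0$, so $C$ is a uniform upper bound for the coefficient controlling the perturbation term in \eqref{eq:decayperp} (here one uses $\beta > 0$, and that $\alpha,\beta$ are constants — or at least that the ratio $\alpha/\beta$ is — so that $C$ is genuinely a constant). It then remains only to verify that $C < 0$, and this is precisely a rearrangement of the standing hypothesis: $\vartheta > \frac{\beta - \alpha}{N\beta}$ is equivalent, after multiplying by $N$ and using $\frac{\beta-\alpha}{\beta} = 1 - \frac{\alpha}{\beta}$, to $N\vartheta > 1 - \frac{\alpha}{\beta}$, i.e. to $1 - N\vartheta - \frac{\alpha}{\beta} < 0$, which says exactly $C < 0$.

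With this $C < 0$ in hand, the last assertion of Theorem~\ref{th:suff_cond_consensus}, applied with $T = 0$, yields that every solution of the system tends to consensus, which is the claim. I do not expect a genuine obstacle in this argument: it is a direct corollary. The only points that deserve a moment's attention are recognising that $S(t) \equiv 1$ for a stochastic weight matrix and that the inequality in the hypothesis is meaningful uniformly in $t$ only when $\beta > 0$ (and, implicitly, $\alpha/\beta$ constant); once these are granted, the decay estimate \eqref{eq:decayperp} does all the work.
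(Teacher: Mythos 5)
Your argument is correct and coincides with the paper's own proof: both reduce the claim to Theorem \ref{th:suff_cond_consensus} by observing that stochasticity gives $S(t)=1$, so $S(t)-NI(t)-\frac{\alpha}{\beta}\leq 1-N\vartheta-\frac{\alpha}{\beta}$, which the hypothesis on $\vartheta$ makes negative. Your explicit algebraic check of that last equivalence and the remark about $\beta>0$ are just slightly more detailed than the paper's one-line verification.
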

\begin{proof}
Under the above hypotheses, the quantity $S(t) - NI(t) - \frac{\alpha}{\beta}$ of Theorem \ref{th:suff_cond_consensus} is bounded from above by $1 - N \vartheta - \frac{\alpha}{\beta}$, which, by assumption, is negative.
\end{proof}

\begin{corollary} \label{cor:phi_consensus}
Suppose that
\begin{align*}
\omega_{ij}(t) = \frac{\phi(r_{ij}(t))}{\eta(t)}
\end{align*}
where $\phi:[0, +\infty) \rightarrow \left(0,1\right]$ is a non increasing, positive, bounded function, and $\eta:[0, +\infty) \rightarrow \left[0,+\infty\right)$ is a nonnegative function. Then, given constants $\alpha, \beta \geq 0$ satisfying
\begin{align} \label{eq:betaalpha}
1 \leq N \frac{\beta}{\alpha} \leq \eta(t) \quad \text{for every } t \in [0,+\infty),
\end{align}
we have that any solution of system \eqref{eq:cuckersmale_perturbed}, for $\Delta_i$ as in \eqref{eq:pert_cucker} and $\alpha$ and $\beta$ as above, tends to consensus.
\end{corollary}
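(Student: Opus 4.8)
The plan is to recognise the system in the statement as a special case of the setting treated in Theorem~\ref{th:suff_cond_consensus}, and then to exploit the constraint \eqref{eq:betaalpha} to force the correction coefficient in \eqref{eq:decayperp} to be nonpositive. First I would verify the structural hypotheses of Theorem~\ref{th:suff_cond_consensus}: with $\Delta_i$ given by \eqref{eq:pert_cucker} and $\omega_{ij}(t)=\phi(r_{ij}(t))/\eta(t)$, the matrix $\omega(t)$ is symmetric, because $r_{ij}=r_{ji}$ and $\phi$ depends only on this mutual distance, and it has nonnegative entries, because $\phi>0$ and $\eta\ge 0$. Hence the estimate \eqref{eq:decayperp} is available, with $I(t)=\eta(t)^{-1}\min_{i,j}\phi(r_{ij}(t))$ and $S(t)=\eta(t)^{-1}\max_i\sum_{j}\phi(r_{ij}(t))$.

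Next I would bound these two quantities. Since $\phi\le 1$, one gets $S(t)\le N/\eta(t)$. Since $\phi$ is non-increasing, $\min_{i,j}\phi(r_{ij}(t))=\phi\big(\max_{i,j}r_{ij}(t)\big)$, and the elementary estimate \eqref{eq:distance_est} gives $\max_{i,j}r_{ij}(t)\le\sqrt{2NX(t)}$, so $I(t)\ge \eta(t)^{-1}\phi\big(\sqrt{2NX(t)}\big)$. Inserting both bounds and using \eqref{eq:betaalpha} in the form $N/\eta(t)\le\alpha/\beta$ yields
\[
S(t)-NI(t)-\frac{\alpha}{\beta} \ \le\ \frac{N}{\eta(t)}-\frac{N\,\phi(\sqrt{2NX(t)})}{\eta(t)}-\frac{\alpha}{\beta} \ \le\ -\frac{N\,\phi(\sqrt{2NX(t)})}{\eta(t)} \ \le\ 0 .
\]
Plugging this into \eqref{eq:decayperp} produces
\[
\frac{d}{dt}V(t) \ \le\ -2\Big[\,a\big(\sqrt{2NX(t)}\big)+\frac{N\beta}{\eta(t)}\,\phi\big(\sqrt{2NX(t)}\big)\Big]V(t),
\]
which is exactly the decay estimate for $V$ associated with an uncontrolled Cucker--Smale system \eqref{eq:cuckersmale} whose interaction kernel dominates $a$. (Equivalently, in the case $\alpha=\beta$ one may substitute $\overline{\bm{v}}-\bm{v}_i=-\bm{v}_i^\perp$ and $\sum_j\omega_{ij}\bm{v}_j^\perp=\sum_j\omega_{ij}(\bm{v}_j-\bm{v}_i)+(\sum_j\omega_{ij})\bm{v}_i^\perp$ directly in \eqref{eq:cuckersmale_perturbed}, rewriting it as a Cucker--Smale system with interaction coefficients $a(r_{ij})+N\beta\phi(r_{ij})/\eta(t)$ plus a residual, sign-definite feedback of the form $\beta\big(1-\sum_j\omega_{ij}\big)(\overline{\bm{v}}-\bm{v}_i)$; this is the reduction announced just before the statement.)

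The remaining step is to conclude convergence to consensus from this differential inequality, by the comparison argument underlying Theorem~\ref{thm:hhk}: since the bracketed coefficient is nonnegative, $V$ is non-increasing, hence $\sqrt{X(t)}$ grows at most linearly, and one feeds this back into the inequality to drive $V(t)\to 0$. I expect this last step to be the delicate point: because $\eta$ is only assumed bounded below, the $\phi$-correction on its own need not contribute the ``fat tail'' needed for unconditional convergence, so the argument must use the full effective kernel $a(\cdot)+\tfrac{N\beta}{\eta(t)}\phi(\cdot)$ together with the hypotheses on $\phi$ (positivity and boundedness by $1$) and on $\eta$ through \eqref{eq:betaalpha}, and must keep careful track that the time-dependence entering via $\eta(t)$ does not disrupt the monotonicity exploited in the proof of Theorem~\ref{thm:hhk}.
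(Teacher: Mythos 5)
Your reduction to Theorem~\ref{th:suff_cond_consensus} and the bounds $S(t)\le N/\eta(t)$, $I(t)\ge \phi(\sqrt{2NX(t)})/\eta(t)$ match the paper's starting point, but your conclusion has a genuine gap. The chain you derive only yields $S(t)-NI(t)-\alpha/\beta\le -N\phi(\sqrt{2NX(t)})/\eta(t)\le 0$, whereas Theorem~\ref{th:suff_cond_consensus} requires this quantity to be bounded above by a \emph{strictly negative constant} on a tail interval; with mere nonpositivity, all that survives of \eqref{eq:decayperp} is a decay of uncontrolled Cucker--Smale type with effective kernel $a+\tfrac{N\beta}{\eta}\phi$. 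Your fallback --- running the comparison argument of Theorem~\ref{thm:hhk} with that effective kernel --- cannot close the argument, because it delivers only \emph{conditional} consensus: it needs the integral of the effective kernel over $[\sqrt{X_0},+\infty)$ to dominate $\sqrt{V_0}$, while $\phi$ is merely positive, bounded and non-increasing and may well be integrable (e.g.\ $\phi(r)=(1+r^2)^{-\epsilon}$ with $\epsilon>1/2$), and $\eta$ has no a priori upper bound. You flag this yourself as ``the delicate point,'' but you do not resolve it, and as written the final step (``one feeds this back into the inequality to drive $V(t)\to 0$'') is false for general $a$ and $\phi$: it would prove unconditional consensus for the uncontrolled system as well.

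The paper closes the gap with a dichotomy that is missing from your argument. If $X(t)$ stays bounded by some $\overline{X}$, then $\phi(\sqrt{2NX(t)})\ge\phi(\sqrt{2N\overline{X}})>0$ by the positivity and monotonicity of $\phi$, so your own estimate becomes uniformly negative and Theorem~\ref{th:suff_cond_consensus} applies directly, giving exponential decay of $V$. If instead $X(t)$ is unbounded, $I(t)$ is useless (it may tend to $0$), and the paper switches mechanisms: unboundedness forces, for each $i$, some distance $r_{ij(i)}(t)$ to become large, so that eventually $\phi(r_{ij(i)}(t))\le 1-\rho$ and hence $S(t)\le (N-\rho)/\eta(t)\le \frac{\alpha}{N\beta}(N-\rho)$; combined with $\frac{\alpha}{\beta}\eta(t)\ge N$ from \eqref{eq:betaalpha}, this again bounds the relevant quantity above by the negative constant $-\frac{\alpha\rho}{N\beta}$, and Theorem~\ref{th:suff_cond_consensus} concludes. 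Neither branch invokes the integral condition of Theorem~\ref{thm:hhk}, which is precisely how the unconditional statement is obtained.
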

\begin{proof}
If we consider the quantity $X(t)$, there are at most two cases: either $X(t)$ is bounded from above by a constant $\overline{X}$ in $[0, +\infty)$, or $X(t)$ remains unbounded.

In the first case, we can  bound $S(t)$ from above by $N$. Since $r_{ij}(t) \leq \sqrt{2NX(t)} \leq \sqrt{2N\overline{X}}$ and $\phi$ is non increasing, we have that
\begin{align*}
I(t) = \min_{ij} \omega_{ij}(t) \geq \phi\left(\sqrt{2N\overline{X}}\right),
\end{align*}
and thus, from \eqref{eq:betaalpha}, it follows that
\begin{align} \label{eq:invoketheorem}
S(t) - N I(t) - \frac{\alpha}{\beta}\eta(t) \leq N - N \phi\left(\sqrt{2N\overline{X}}\right) - \frac{\alpha}{\beta}\eta(t) \leq - N \phi\left(\sqrt{2N\overline{X}}\right).
\end{align}
Since $\phi$ is positive, the proof is completed by using Theorem \ref{th:suff_cond_consensus}.

Suppose now, instead, that $X(t)$ is unbounded: in this case the term $I(t)$ is bounded from below by a term going to $0$ (and hence not helping us) and we have to take advantage of $S(t)$ as shall be shown now. By definition, $X(t)$ is unbounded if and only if there exist two agents with indexes $h$ and $k$ such that $r_{hk}(t)$ is unbounded. By the triangle inequality, it follows that for any index $i$ , there is an index $j(i)$ for which $r_{ij(i)}$ is unbounded. Thus, we fix $\rho > 0$ and let $T > 0$ be the maximum time $t$ such that $\phi(r_{ij(i)}(t)) < 1 - \rho$ for every $i = 1, \ldots, N$. Then for every $t > T$ we may bound $S(t)$ as
\begin{align*}
S(t) \leq \frac{1}{\eta(t)}\left(N - 1 + 1 - \rho \right) \leq \frac{\alpha}{N \beta}(N - \rho)
\end{align*}
and therefore, again from \eqref{eq:betaalpha},
\begin{align*}
S(t) - N I(t) - \frac{\alpha}{\beta}\eta(t) \leq \frac{\alpha}{N \beta}(N - \rho) - \frac{\alpha}{\beta}\eta(t) \leq - \frac{\alpha}{N \beta}\rho + \frac{\alpha}{\beta} - N \leq - \frac{\alpha}{N \beta}\rho < 0
\end{align*}
holds for every $t \geq T$, since $N \frac{\beta}{\alpha} \geq 1$ by assumption. Theorem \ref{th:suff_cond_consensus} yields thus the result.
\end{proof}

\begin{remark} \label{re:corollary_phi}
A concrete example of a system for which we can apply Corollary \ref{cor:phi_consensus} is obtained by considering the functions
\begin{align*}
\phi(r) = \frac{1}{(1 + r^2)^{\epsilon}}\,,
\end{align*}
and
\begin{align*}
\eta(t) = \max_i\left\{\sum^N_{j = 1} \phi(r_{ij}(t))\right\}.
\end{align*}
In this case, $\epsilon$ can be thought as a parameter tuning the ability of each particle to gather information about the speed of the other agents: indeed, consider a set of agents such that $r_{ij} > 0$, if $i \not = j$. Then, if $\epsilon$ is $0$, $\overline{\bm{v}}_i = \overline{\bm{v}}$ for every $i$, and each particle communicates at the same rate with near and far away agents, while if $\epsilon$ goes to $+\infty$ then $\overline{\bm{v}}_i$ approaches $\bm{v}_i$, hence each particle is unable to gain knowledge about the speed of the other agents. The function $\eta$ serves to the purpose of being a common normalizing factor: naturally one would choose for every agent $i$ the normalizing factor given by
\begin{align} \label{eq:usual_choice}
\sum^N_{j = 1} \phi(r_{ij}(t)),
\end{align}
but that would produce a non symmetric matrix $\omega$, for which the above results are not valid. In this context, the function $\eta$ is a suitable replacement, being also coherent with the asymptotic behavior of \eqref{eq:usual_choice} for $\epsilon \rightarrow 0$ and $\epsilon \rightarrow +\infty$.
\end{remark}

\begin{remark} \label{rem:functionR}
The request of positivity of the function $\phi$ cannot be removed from Corollary \ref{cor:phi_consensus}, as the function
\begin{align*}
\phi(r) = \chi_{[0,R]}(r) = \left\{
\begin{array}{ll}
1 & \text{ if } r \leq R \\
0 & \text{ if } r > R \\
\end{array}
\right.
\end{align*}
shows.
%This is the case when each agent is only able to gather information to calculate the mean velocity of the system in a ball with radius $R \geq 0$ with center in the agent. This model can be visualized by the following situation: imagine a group of people are inside a pitch black environment, where they cannot see anything. They can communicate with each other using only their voice, so the nearer the agent which speak is the better the voice can be recognized, and thus the more effective the exchange of information is. Now, suppose they are given a lighter and this lighter casts a light aura of radius $R > 0$ (the case $R = 0$ can be visualized with the absence of a lighter). The consequence is that they receive further information from other agents as soon as they stay inside the cone of light the lighter casts. The strongest is the will of each agent to remain near to those he can see thanks to the lighter (preference modeled by the parameter $\alpha$ in \eqref{eq:cuckersmale_perturbed}) the more this situation will give rise to subgroups of agents moving coherently, i.e. as a single agent placed in the center of mass of the subgroup will move.
Indeed, what fails in the argument of the proof is the case in which we suppose that $X(t)$ is bounded by $\overline{X}$: if the quantity $\sqrt{2N\overline{X}}$ is not less or equal to $R$, then $\phi\left(\sqrt{2N\overline{X}}\right) = 0$ in the inequality \eqref{eq:invoketheorem}, and we cannot invoke Theorem \ref{th:suff_cond_consensus} in order to infer consensus.
\end{remark}

\section{Perturbations due to local averaging} \label{sec:localmeanR}

An interesting case of a system like \eqref{eq:cuckersmale_local} is the one where the local mean is calculated as
\begin{align*}
\overline{\bm{v}}_i = \frac{1}{\# \Lambda_R(i)} \sum_{j \in \Lambda_R(i)} \bm{v}_j,
\end{align*}
where $\Lambda_R(i) = \left\{j \in \{1, \ldots, N\} \mid r_{ij} \leq R \right\}$ and $\# \Lambda_R(i)$ is its cardinality. In this case, we model the situation in which each agent calculates its local mean counting only those agents inside a ball of radius $R$ centered on him. We want to address the issue of characterizing the behavior of system \eqref{eq:cuckersmale_local} with the above choice for $\overline{\bm{v}}_i$ when the radius $R$ of each ball is either reduced to $0$ or set to grow to $+\infty$: we shall see that we can reformulate  this decentralized system again as a Cucker-Smale model for a different interaction function for which we can apply Theorem \ref{thm:hhk}. We shall show how tuning the radius $R$ affects the convergence to consensus, from the case $R \geq 0$ where only conditional convergence is ensured, to the unconditional convergence result given for $R = +\infty$.

First of all, by defining $\chi_{[0,R]}(r)$ as the characteristic function of a ball of radius $R$ centered at the origin,  we can rewrite $\overline{\bm{v}}_i$ as
\begin{align} \label{eq:truncated_perturbation}
\overline{\bm{v}}_i = \frac{1}{\sum^N_{k = 1} \chi_{[0,R]}(r_{ik})} \sum_{j = 1}^N \chi_{[0,R]}(r_{ij}) \bm{v}_j.
\end{align}
%A good approximation of the step function $\chi_{[0,R]}$ is given by the function
%\begin{align} \label{eq:truncated_bell}
%\psi_{R,\theta}(r) = \left\{
%\begin{array}{ll}
%1 & \text{ if } r \leq R, \\
%\frac{1}{(r - R + 1)^{\theta}} & \text{ if } r > R. \\
%\end{array}
%\right.
%\end{align}
%which is parametrized by the couple $(R, \theta)$.
As already noted in Remark \ref{re:corollary_phi}, the normalizing terms $\sum^N_{k = 1} \chi_{[0,R]}(r_{ik}(t))$ give rise to a matrix of weights which is not symmetric. Since this will be an issue also in the present section, we take $\eta_R(t)$ to be a function approximating the above normalizing terms and which also preserves its asymptotics for $R \rightarrow 0$ and $R \rightarrow +\infty$, as for instance,
\begin{align} \label{eq:normalizingR}
\eta_R(t) = \max_i \left\{\sum^N_{k = 1} \chi_{[0,R]}(r_{ik}(t)) \right\}.
\end{align}
For every $t \geq 0$, we replace the vector $\overline{\bm{v}}_i(t) $ by
\begin{align*}
\frac{1}{\eta_{R}(t)} \sum^N_{j  = 1} \chi_{[0,R]}(r_{ij}(t)) \bm{v}_j(t)\,.
\end{align*}
Moreover, the vector
\begin{align*}
\bm{v}_i(t) \cdot \left(\frac{1}{\eta_{R}(t)} \sum^N_{j  = 1} \chi_{[0,R]}(r_{ij}(t)) \right)
\end{align*}
is also an approximation of $\bm{v}_i(t)$ for $R \rightarrow 0$ and $R \rightarrow +\infty$. This motivates the replacement of the term $\overline{\bm{v}}_i - \bm{v}_i$ in system \eqref{eq:cuckersmale_local} where $\overline{\bm{v}}_i$ is as in \eqref{eq:truncated_perturbation}, with
\begin{equation}
\begin{split} \label{eq:derivationR}
\frac{1}{\eta_{R}} \sum^N_{j  = 1} \chi_{[0,R]}(r_{ij})\bm{v}_j - \left(\frac{1}{\eta_{R}}  \sum^N_{j  = 1} \chi_{[0,R]}(r_{ij}) \right)\bm{v}_i & =\frac{1}{\eta_{R}}  \sum^N_{j  = 1} \chi_{[0,R]}(r_{ij}) (\bm{v}_j - \bm{v}_i) \\
& = \frac{1}{\eta_{R}}  \sum^N_{i = 1} (\bm{v}_j - \bm{v}_i) \\
& \quad + \frac{1}{\eta_{R}}  \sum^N_{j  = 1} (1 - \chi_{[0,R]}(r_{ij})) (\bm{v}_i - \bm{v}_j) \\
& = \frac{N}{\eta_{R}} \left(\overline{\bm{v}} - \bm{v}_i\right) \\
& \quad + \frac{1}{\eta_{R}}  \sum^N_{j  = 1} (1 - \chi_{[0,R]}(r_{ij})) (\bm{v}_i - \bm{v}_j).
\end{split}
\end{equation}

We can thus rewrite the original system as \eqref{eq:cuckersmale_perturbed}:
\begin{eqnarray}
\left\{
\begin{aligned}
\begin{split} \label{eq:cuckersmale_R}
\dot{\bm{x}}_{i} & = \bm{v}_{i} \\
\dot{\bm{v}}_{i} & = \frac{1}{N} \sum_{j = 1}^N a\left(r_{ij}\right)\left(\bm{v}_{j}-\bm{v}_{i}\right) + \gamma \frac{N}{\eta_{\varepsilon}} (\overline{\bm{v}} - \bm{v}_i) + \gamma \Delta^{\varepsilon}_i,
\end{split}
\end{aligned}
\right.
\end{eqnarray}
where the perturbations have the general form
\begin{align} \label{eq:pert_repulsion}
\Delta^{\varepsilon}_i(t) = \frac{1}{\eta_{\varepsilon}(t)} \sum^N_{j = 1} (1 - \psi_{\varepsilon}(r_{ij}(t))) (\bm{v}_i(t) - \bm{v}_j(t)).
\end{align}
For \eqref{eq:pert_repulsion} to be a coherent approximation of our case study, we prescribe that $\varepsilon$ is a parameter ranging in a nonempty set $\Omega$ satisfying:
\begin{enumerate}[$(i)$]
\item \label{item:i} $\psi_{\varepsilon}:[0, +\infty) \rightarrow [0,1]$ is a nonincreasing measurable function for every $\varepsilon \in \Omega$;
\item \label{item:ii} $\eta_{\varepsilon}:[0, +\infty) \rightarrow \mathbb{R}$ is an $L^{\infty}$-function for every $\varepsilon \in \Omega$;
\item \label{item:iii} there are two disjoint subsets $\Omega_{C\!S}$ and $\Omega_{U}$ of $\Omega$ such that, if $\varepsilon \in \Omega_{C\!S}$ then $\psi_{\varepsilon} \equiv \chi_{\{0\}}$ and $\eta_{\varepsilon} \equiv 1$, while if $\varepsilon \in \Omega_{U}$ then $\psi_{\varepsilon} \equiv \chi_{[0, +\infty)}$ and $\eta_{\varepsilon} \equiv N$.
\end{enumerate}
With requirement \eqref{item:iii}, we impose that if $\varepsilon \in \Omega_{C\!S}$ then $\Delta_i^{\varepsilon} = - \frac{N}{\eta_{\varepsilon}} (\overline{\bm{v}} - \bm{v}_i)$, therefore recovering system \eqref{eq:cuckersmale} from \eqref{eq:cuckersmale_R}, whereas if $\varepsilon \in \Omega_{U}$ then $\Delta_i^{\varepsilon} = 0$, and we obtain a particular instance of system \eqref{eq:cuckersmale_uniform}.

In order to study under which conditions on the initial values the solutions of system \eqref{eq:cuckersmale_R} converge to consensus, we cannot use the results of the previous section. This is because, if we compare the following calculations
\begin{equation*}
\begin{split}
\frac{1}{\eta_{R}} \sum^N_{j  = 1} \chi_{[0,R]}(r_{ij})\bm{v}_j - \left(\frac{1}{\eta_{R}}  \sum^N_{j  = 1} \chi_{[0,R]}(r_{ij}) \right)\bm{v}_i & = \left(\frac{1}{\eta_{R}}  \sum^N_{j  = 1} \chi_{[0,R]}(r_{ij}) \right)(\overline{\bm{v}} - \bm{v}_i) \\
& \quad + \frac{1}{\eta_{R}} \sum^N_{j  = 1} \chi_{[0,R]}(r_{ij})\bm{v}^{\perp}_j
\end{split}
\end{equation*}
to \eqref{eq:derivationR}, we obtain that the system we are considering is precisely the version of system \eqref{eq:cuckersmale_perturbed} where $\alpha = \frac{1}{\eta_{R}}  \sum^N_{j  = 1} \chi_{[0,R]}(r_{ij})$ (which is equal to 1 for sufficiently small and large enough $R$) and $\Delta_i$ is of the same kind as the one mentioned in Remark \ref{rem:functionR}.

We present the following result which gives a sufficient condition on the initial data for which the solutions of system \eqref{eq:cuckersmale_R} converge to consensus. We point out that system \eqref{eq:cuckersmale_R} can be rewritten into the shape of a Cucker-Smale type of model as follows:
\begin{eqnarray*}
\left\{
\begin{aligned}
\begin{split}
\dot{\bm{x}}_{i} & = \bm{v}_{i} \\
\dot{\bm{v}}_{i} & = \frac{1}{N} \sum_{j = 1}^N \left(a\left(r_{ij}\right) + \gamma \frac{N}{\eta_{\varepsilon}}\psi_{\varepsilon}(r_{ij})\right) (\overline{\bm{v}} - \bm{v}_i),
\end{split}
\end{aligned}
\right.
\end{eqnarray*}
and therefore, the following result is obtained as an application of Theorem \ref{thm:hhk}.

\begin{theorem} \label{th:HaHaKimExtended}
Fix $\gamma \geq 0$, consider system \eqref{eq:cuckersmale_R} where $\Delta^{\varepsilon}_i$ is as in \eqref{eq:pert_repulsion} and let $(\bm{x}_0,\bm{v}_0) \in (\mathbb{R}^d)^N \times (\mathbb{R}^d)^N$. Then if $X_0 = B(\bm{x}_0, \bm{x}_0)$ and $V_0 = B(\bm{v}_0,\bm{v}_0)$ satisfy
\begin{align} \label{eq:HaHaKimLarge}
\int^{+\infty}_{\sqrt{X_0}} a\left(\sqrt{2N}r\right) \ dr + \frac{\gamma N}{\vnorm{\eta_{\varepsilon}}_{\infty}} \int^{+\infty}_{\sqrt{X_0}} \psi_{\varepsilon}\left(\sqrt{2N}r\right) \ dr \geq \sqrt{V_0},
\end{align}
the solution of system \eqref{eq:cuckersmale_R} with initial datum $(\bm{x}_0,\bm{v}_0)$ tends to consensus.
\end{theorem}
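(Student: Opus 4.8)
The plan is to take seriously the identity displayed just above the statement: regrouping the two feedback contributions $\gamma\frac{N}{\eta_\varepsilon}(\overline{\bm{v}}-\bm{v}_i)$ and $\gamma\Delta^{\varepsilon}_i$ in \eqref{eq:cuckersmale_R}, with $\Delta^{\varepsilon}_i$ as in \eqref{eq:pert_repulsion}, shows that \eqref{eq:cuckersmale_R} is exactly the Cucker--Smale system
\[
\dot{\bm{x}}_i=\bm{v}_i,\qquad \dot{\bm{v}}_i=\frac1N\sum_{j=1}^N\omega_{ij}(t)\,(\bm{v}_j-\bm{v}_i),\qquad \omega_{ij}(t):=a(r_{ij}(t))+\gamma\,\frac{N}{\eta_{\varepsilon}(t)}\,\psi_{\varepsilon}(r_{ij}(t)).
\]
For every fixed $t$ the matrix $\omega(t)$ is symmetric (because $r_{ij}=r_{ji}$), has strictly positive entries (because $a>0$), and each $\omega_{ij}$ is non-increasing in $r_{ij}$ (because $a$ and $\psi_{\varepsilon}$ are non-increasing and $\gamma N/\eta_{\varepsilon}\ge0$). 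In particular, by symmetry $\frac{d}{dt}\overline{\bm{v}}=0$ (consistent with $\overline{\Delta^{\varepsilon}}=0$ in Proposition \ref{prop:derivative_mean}), so $\dot{\bm{x}}^{\perp}_i=\bm{v}^{\perp}_i$ and the functionals $X,V$ satisfy the same relations as in the uncontrolled Cucker--Smale model.

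Next I would record the two differential inequalities on which the proof of Theorem \ref{thm:hhk} rests. Differentiating $X=\frac1N\sum_i\vnorm{\bm{x}^{\perp}_i}^2$ and applying Cauchy--Schwarz gives $\frac{d}{dt}X(t)\le 2\sqrt{X(t)V(t)}$, hence $\big|\frac{d}{dt}\sqrt{X(t)}\big|\le\sqrt{V(t)}$. For $V$, applying the computation \eqref{eq:maintrick} to the vector $\bm{v}^{\perp}$ with the weights $\omega_{ij}(t)$ (equivalently, Lemma \ref{lem:bigv_growth} applied to system \eqref{eq:cuckersmale_R} with $\alpha=\gamma N/\eta_{\varepsilon}$, $\beta=\gamma$, $\Delta_i=\Delta^{\varepsilon}_i$, after bounding the perturbation term), then using $r_{ij}(t)\le\sqrt{2NX(t)}$ from \eqref{eq:distance_est}, the monotonicity of $a$ and $\psi_{\varepsilon}$, and $\eta_{\varepsilon}(t)\le\vnorm{\eta_{\varepsilon}}_{\infty}$, gives
\[
\frac{d}{dt}V(t)\le-2\Big(a\big(\sqrt{2NX(t)}\big)+\gamma\,\tfrac{N}{\eta_{\varepsilon}(t)}\,\psi_{\varepsilon}\big(\sqrt{2NX(t)}\big)\Big)V(t)\le-2\,b\big(\sqrt{2NX(t)}\big)\,V(t),
\]
where $b(r):=a(r)+\frac{\gamma N}{\vnorm{\eta_{\varepsilon}}_{\infty}}\psi_{\varepsilon}(r)$ is positive, bounded and non-increasing.

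From here the argument is the one in \cite[Theorem 3.1]{HaHaKim}, run with $b$ in the role of $a$: the two estimates above are exactly the two ingredients used there once a Carath\'eodory solution is granted, and no continuity of $b$ (equivalently of $\psi_{\varepsilon}$) is needed. Concretely, put $B(r):=\int_{\sqrt{X_0}}^{r}b(\sqrt{2N}s)\,ds$; then $t\mapsto\sqrt{V(t)}+B\big(\sqrt{X(t)}\big)$ has (a.e.) derivative $\le\frac{d}{dt}\sqrt{V}+b(\sqrt{2NX})\,\frac{d}{dt}\sqrt{X}\le\frac{d}{dt}\sqrt{V}+b(\sqrt{2NX})\sqrt{V}\le0$, so it stays $\le\sqrt{V_0}$. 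Since assumption \eqref{eq:HaHaKimLarge} is precisely $\int_{\sqrt{X_0}}^{+\infty}b(\sqrt{2N}r)\,dr\ge\sqrt{V_0}$ and $b\ge a>0$, the inequality $B(\sqrt{X(t)})\le\sqrt{V_0}-\sqrt{V(t)}\le\sqrt{V_0}$ forces $\sqrt{X(t)}\le\bar X$ for all $t\ge0$ and some finite $\bar X$; hence $b(\sqrt{2NX(t)})\ge b(\sqrt{2N}\,\bar X)\ge a(\sqrt{2N}\,\bar X)>0$, and Gr\"onwall's lemma in $\frac{d}{dt}V\le-2a(\sqrt{2N}\,\bar X)\,V$ yields $V(t)\le V_0\,e^{-2a(\sqrt{2N}\,\bar X)t}\to0$. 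Recalling that $\lim_{t\to+\infty}V(t)=0$ is equivalent to the solution tending to consensus, this finishes the proof.

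I do not expect a real obstacle here: the substance of the statement is the reduction to a Cucker--Smale model with the enlarged, and still admissible, kernel $b$, after which Theorem \ref{thm:hhk} applies. The only points needing a line of care are that $\sqrt{X}$ and $\sqrt{V}$ may fail to be differentiable where $X$ or $V$ vanish (handled in the standard way, e.g. by replacing $\sqrt{V}$ with $\sqrt{V+\epsilon}$ and letting $\epsilon\downarrow0$, or by noting that $\dot V\le0$ keeps $V$ at $0$ once it reaches it), and that in the borderline case of equality in \eqref{eq:HaHaKimLarge} the bound $\bar X$ may be $+\infty$, which is treated exactly as in \cite{HaHaKim}.
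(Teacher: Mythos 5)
Your proposal is correct and follows essentially the same route as the paper's proof: the identical bound on the perturbation term gives $\frac{d}{dt}V \le -2\bigl(a(\sqrt{2NX}) + \tfrac{\gamma N}{\vnorm{\eta_{\varepsilon}}_{\infty}}\psi_{\varepsilon}(\sqrt{2NX})\bigr)V$, which combined with $\frac{d}{dt}\sqrt{X}\le\sqrt{V}$ yields the uniform bound $X(t)\le\overline{X}$ and then exponential decay of $V$. Your monotone functional $\sqrt{V}+B(\sqrt{X})$ is simply the differential form of the paper's integrated inequality \eqref{eq:cesemo}, and both arguments dispose of the borderline equality case with the same brief remark.
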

\begin{proof}
From requirement \eqref{item:i} we have
\begin{equation*}
\begin{split}
\frac{1}{N}\sum^N_{i = 1} \scalarp{\Delta_i(t), \bm{v}^{\perp}_i(t)} & = \frac{1}{N\eta_{\varepsilon}(t)} \sum^N_{i = 1} \sum^N_{j = 1} (1 - \psi_{\varepsilon}(r_{ij}(t))) \scalarp{\bm{v}_i(t) - \bm{v}_j(t), \bm{v}^{\perp}_i(t)}\\
& \leq  \frac{N}{\eta_{\varepsilon}(t)} \left(1 - \psi_{\varepsilon} \left(\sqrt{2N X(t)}\right)\right) V(t),
\end{split}
\end{equation*}
and inequality \eqref{eq:maintool} reads
\begin{align*}
\frac{d}{dt}V(t) \leq - 2  \left(a\left(\sqrt{2NX(t)}\right) + \frac{\gamma N}{\eta_{\varepsilon}(t)} \psi_{\varepsilon} \left(\sqrt{2N X(t)}\right) \right) V(t).
\end{align*}
Moreover, since
\begin{align*}
\frac{d}{dt}\sqrt{V(t)} = \frac{1}{2 \sqrt{V(t)}} \frac{d}{dt}V(t),
\end{align*}
from \eqref{item:ii} we have
\begin{equation}
\begin{split}\label{eq:HaHaDecay}
\frac{d}{dt}\sqrt{V(t)} & \leq - 2  \left(a\left(\sqrt{2NX(t)}\right) + \frac{\gamma N}{\eta_{\varepsilon}(t)} \psi_{\varepsilon} \left(\sqrt{2N X(t)}\right) \right) \sqrt{V(t)} \\
& \leq - 2  \left(a\left(\sqrt{2NX(t)}\right) + \frac{\gamma N}{\vnorm{\eta_{\varepsilon}}_{\infty}} \psi_{\varepsilon} \left(\sqrt{2N X(t)}\right) \right) \sqrt{V(t)},
\end{split}
\end{equation}
and integrating between $0$ and $t$ we obtain
\begin{align} \label{eq:growthbound}
\sqrt{V(t)} - \sqrt{V(0)} \leq - \int^t_0 \left(a\left(\sqrt{2NX(t)}\right) + \frac{\gamma N}{\vnorm{\eta_{\varepsilon}}_{\infty}} \psi_{\varepsilon} \left(\sqrt{2N X(t)}\right) \right) \sqrt{V(s)} \ ds.
\end{align}
We now work on changing the variable inside the integral. We can actually claim that
\begin{align} \label{eq:changevariable}
\frac{d}{dt} X(t) \leq \sqrt{V(t)},
\end{align}
since, indeed, the following
\begin{equation*}
\begin{split}
\frac{d}{dt} X(t) = \frac{1}{N} \sum^N_{i = 1}  \frac{d}{dt}\vnorm{x^{\perp}_i(t)}^2 & = \frac{2}{N} \sum^N_{i = 1} \scalarp{\bm{x}^{\perp}_i(t),\frac{d}{dt}\bm{x}^{\perp}_i(t)} \\
& = \frac{2}{N} \sum^N_{i = 1} \scalarp{\bm{x}^{\perp}_i(t),\bm{v}^{\perp}_i(t)} \\
& \leq \frac{2}{N} \sum^N_{i = 1} \vnorm{\bm{x}^{\perp}_i(t)}\vnorm{\bm{v}^{\perp}_i(t)} \\
& \leq \frac{2}{N} \left(\sum^N_{i = 1}\vnorm{\bm{x}^{\perp}_i(t)}\right)^{\frac{1}{2}} \left(\sum^N_{i = 1} \vnorm{\bm{v}^{\perp}_i(t)}^2\right)^{\frac{1}{2}} \\
& = 2 \sqrt{X(t)}\sqrt{V(t)},
\end{split}
\end{equation*}
and $\frac{d}{dt} X(t) = \frac{d}{dt}(\sqrt{X(t)}\sqrt{X(t)}) = 2 \sqrt{X(t)} \frac{d}{dt} \sqrt{X(t)}$, together yield \eqref{eq:changevariable}. Note that we have used
\begin{align*}
\frac{d}{dt}\bm{x}^{\perp}_i(t) = \frac{d}{dt}\left(\bm{x}_i(t) - \frac{1}{N} \sum^N_{j = 1} \bm{x}_j(t)\right) = \bm{v}_i(t) - \frac{1}{N} \sum^N_{j = 1} \bm{v}_j(t) = \bm{v}^{\perp}_i(t).
\end{align*}
Setting $r = \sqrt{X(s)}$, and using \eqref{eq:changevariable} we can change variable in \eqref{eq:growthbound} as follows:
\begin{align} \label{eq:cesemo}
\sqrt{V(t)} - \sqrt{V(0)} \leq - \int^{\sqrt{X(t)}}_{\sqrt{X(0)}} \left(a\left(\sqrt{2N}r\right) + \frac{\gamma N}{\vnorm{\eta_{\varepsilon}}_{\infty}} \psi_{\varepsilon}\left(\sqrt{2N}r\right) \right) \ dr.
\end{align}

Let us suppose that \eqref{eq:HaHaKimLarge} is true, and note that $X(0) = X_0$ and $V(0) = V_0$. If $V(0) = 0$, then there is nothing to prove since we are already in consensus. If, instead, it is true that
\begin{align} \label{eq:almostdone}
0 < \sqrt{V(0)} \leq \int^{+\infty}_{\sqrt{X(0)}} \left(a\left(\sqrt{2N}r\right) + \frac{\gamma N}{\vnorm{\eta_{\varepsilon}}_{\infty}} \psi_{\varepsilon}\left(\sqrt{2N}r\right) \right) \ dr,
\end{align}
then there is a $\overline{X} > X(0)$ such that
\begin{align*}
\sqrt{V(0)} = \int^{\sqrt{\overline{X}}}_{\sqrt{X(0)}} \left(a\left(\sqrt{2N}r\right) + \frac{\gamma N}{\vnorm{\eta_{\varepsilon}}_{\infty}} \psi_{\varepsilon}\left(\sqrt{2N}r\right) \right) \ dr
\end{align*}
(having used the fact that, from \eqref{item:i}, the integrand is a non increasing function). Now, either equality holds in \eqref{eq:almostdone}, and $\lim_{t \rightarrow +\infty} V(t) = 0$ follows by passing to the limit in \eqref{eq:cesemo}, or we have a strict inequality. But in this case $\overline{X} \geq X(t)$ must hold for every $t \geq 0$, since otherwise there would be a $T > 0$ for which we have
\begin{equation*}
\begin{split}
\sqrt{V(0)} & \geq \sqrt{V(T)} + \int^{\sqrt{X(T)}}_{\sqrt{X(0)}} \left(a\left(\sqrt{2N}r\right) + \frac{\gamma N}{\vnorm{\eta_{\varepsilon}}_{\infty}} \psi_{\varepsilon}\left(\sqrt{2N}r\right) \right) \ dr \\
& >  \int^{\sqrt{\overline{X}}}_{\sqrt{X(0)}} \left(a\left(\sqrt{2N}r\right) + \frac{\gamma N}{\vnorm{\eta_{\varepsilon}}_{\infty}} \psi_{\varepsilon}\left(\sqrt{2N}r\right) \right) \ dr \\
& = \sqrt{V(0)},
\end{split}
\end{equation*}
which is obviously a contradiction. Thus, we have that the inequality $\overline{X} \geq X(t)$ is true for every $t \geq 0$, and from \eqref{eq:HaHaDecay} we have
\begin{align*}
\frac{d}{dt}V(t) \leq - 2  \left(a\left(\sqrt{2N\overline{X}}\right) + \frac{\gamma N}{\vnorm{\eta_{\varepsilon}}_{\infty}} \psi_{\varepsilon} \left(\sqrt{2N \overline{X}}\right) \right) V(t).
\end{align*}
The fact that $\lim_{t \rightarrow +\infty} V(t) = 0$ follows from the inequality above.
\end{proof}

A first example of a family of functions $\left\{ \psi_{\varepsilon} \right\}_{\varepsilon \in \Omega}$ is given by
\begin{align*}
\psi_{\varepsilon}(r) = \frac{1}{(1 + r^2)^{\varepsilon}} \text{ where } \varepsilon \in \Omega = [0, \infty],
\end{align*}
for which we set $\psi_{\infty} \equiv \chi_{\{0\}}$ and
\begin{align*}
\eta_{\varepsilon}(t) = \max_i \left\{ \sum^N_{k = 1} \psi_{\varepsilon}(r_{ik}(t)) \right\}.
\end{align*}
In this case, $\Omega_{C\!S} = \{0\}$ and $\Omega_{U} = \{\infty\}$, and \eqref{eq:HaHaKimLarge} is satisfied as soon as
\begin{align*}
\int^{+\infty}_{\sqrt{X_0}} a\left(\sqrt{2N}r\right) \ dr + \gamma \int^{+\infty}_{\sqrt{X_0}} \psi_{\varepsilon}\left(\sqrt{2N}r\right) \ dr \geq \sqrt{V_0}
\end{align*}
is, since $\vnorm{\eta_{\varepsilon}}_{\infty} \leq N$.

But the most interesting example of such a family is the one which has introduced this section: we consider $\Omega = [0, +\infty]$, the set of functions $\{\chi_{[0,R]}\}_{R \in \Omega}$ and $\eta_R$ as in \eqref{eq:normalizingR} (notice that, as before, we have $\Omega_{C\!S} = \{0\}$ and $\Omega_{U} = \{\infty\}$). Since $\vnorm{\eta}_{\infty} \leq N$, if $R$ is sufficiently large to satisfy $\sqrt{2NX_0} \leq R$, condition \eqref{eq:HaHaKimLarge} is satisfied as soon as
\begin{align*}
\int^{+\infty}_{\sqrt{X_0}} a\left(\sqrt{2N}r\right) \ dr + \gamma \left(\frac{R}{\sqrt{2N}} - \sqrt{X_0}\right) \geq \sqrt{V_0},
\end{align*}
by means of a trivial integration. If, instead, $R$ is so small that $\sqrt{2NX_0} > R$, condition \eqref{eq:HaHaKimLarge} is satisfied as soon as
\begin{align*}
\int^{+\infty}_{\sqrt{X_0}} a\left(\sqrt{2N}r\right) \ dr \geq \sqrt{V_0},
\end{align*}
recovering Theorem \ref{thm:hhk}.

The above results can be seen as the asymptotic outcome of the following more general approach: consider the set $\Omega = [0,\infty] \times (1,\infty]$, write $\varepsilon$ as the couple of parameter $R, \theta$ and set
\begin{align*}
\psi_{R,\theta}(r) = \left\{
\begin{array}{ll}
1 & \text{ if } r \leq R, \\
\frac{1}{(r - R + 1)^{\theta}} & \text{ if } r > R. \\
\end{array}
\right.
\end{align*}
This time we have $\Omega_{C\!S} = \{0\}\times\{+ \infty\}$ and $\Omega_{U} = \{+\infty\} \times (1,+\infty]$.
If we suppose that $R$ is sufficiently large to satisfy $\sqrt{2NX_0} \leq R$ and we consider $\eta_{R, \theta}$ to be like
\begin{align*}
\eta_{R, \theta}(t) = \max_i \left\{\sum^N_{k = 1} \psi_{R, \theta}(r_{ik}(t)) \right\} \quad \text{or} \quad \eta_{R, \theta}(t) = \min_i \left\{\sum^N_{k = 1} \psi_{R, \theta}(r_{ik}(t))\right\},
\end{align*}
since in both cases we have $\vnorm{\eta}_{\infty} \leq N$, and it holds
\begin{align*}
\int^{+\infty}_{\sqrt{X_0}} \psi_{R, \theta}\left(\sqrt{2N}r\right) \ dr & = \int^{\frac{R}{\sqrt{2N}}}_{\sqrt{X_0}} \ dr + \int^{+\infty}_{\frac{R}{\sqrt{2N}}} \frac{11}{(\sqrt{2N}r - R + 1)^{\theta}} \ dr\\
& = \frac{R}{\sqrt{2N}} - \sqrt{X_0} + \frac{1}{\theta - 1},
\end{align*}
then condition \eqref{eq:HaHaKimLarge} is satisfied as soon as
\begin{align*}
\int^{+\infty}_{\sqrt{X_0}} a\left(\sqrt{2N}r\right) \ dr + \gamma \left( \frac{R}{\sqrt{2N}} - \sqrt{X_0} + \frac{1}{\theta - 1}\right)  \geq \sqrt{V_0},
\end{align*}
which shows that the consensus region grows linearly with the radius $R$ while it is inversely proportional to the growth of $\theta$.

Otherwise, if $R$ is so small that $\sqrt{2NX_0} > R$, since
\begin{align*}
\int^{+\infty}_{\sqrt{X_0}} \psi_{R, \theta}\left(\sqrt{2N}r\right) \ dr & = \int^{+\infty}_{\sqrt{X_0}} \frac{1}{(\sqrt{2N}r - R + 1)^{\theta}} \ dr\\
& = \frac{1}{(\theta - 1)(\sqrt{2NX_0} - R + 1)^{\theta - 1}}\,,
\end{align*}
we have that condition \eqref{eq:HaHaKimLarge} is true whenever
\begin{align*}
\int^{+\infty}_{\sqrt{X_0}} a\left(\sqrt{2N}r\right) \ dr + \frac{\gamma}{\theta - 1} \frac{1}{(\sqrt{2NX_0} - R + 1)^{\theta - 1}}  \geq \sqrt{V_0}\,.
\end{align*}
In this case, the consensus region is in practice not modified by $R$, but only by the decay of the far-away interaction $\theta$ (the faster the decay, the smaller the consensus region).

In both cases, if $\theta$ goes to $+\infty$, we recover the result we obtained for the family of functions $\{\chi_{[0,R]}\}_{R \in [0,+\infty]}$.

\section{Numerical tests}
We present a series of numerical tests illustrating the main results developed throughout this article. We begin by describing the generic setting upon which the initial configurations of agents are determined; we follow similar ideas as those presented in \cite{cafotove10}. We consider a system of $N$, $2-$dimensional agents with a randomly generated initial configuration of positions and velocities \[(\tilde{\bm{x}},\tilde{\bm{v}})\in[-1,1]^{2N}\times[-1,1]^{2N}\,,\]
interacting by means of the kernel \eqref{eq:kernel} , with $\delta=1$.
We recall that relevant quantities for the analysis of our results are given by
\[
X[\bm{x}](t)=\frac{1}{2N^2}\sum_{i,j,=1}^N||\bm{x}_i(t)-\bm{x}_j(t)||^2\,,\quad\text{and}\quad V[\bm{v}](t)=\frac{1}{2N^2}\sum_{i,j,=1}^N||\bm{v}_i(t)-\bm{v}_j(t)||^2\,.
\]
In fact, once a random initial configuration has been generated, it is possible to rescale it to a desired $(X_0,V_0)$ parametric pair, by means of
\[
(\bm{x},\bm{v})=\left(\sqrt{\frac{X_0}{X[\tilde{\bm{x}}]}}\tilde{\bm{x}}, \sqrt{\frac{V_0}{V[\tilde{\bm{v}}]}} \tilde{\bm{v}}\right)\,,
\]
such that $(X[\bm{x}],V[\bm{v}])=(X_0,V_0)$. As simulations concerning flock trajectories have been generated by prescribing a value for the pair $(X_0,V_0)$, which is used to rescale randomly generated initial conditions, there are slight variations on the initial positions and velocities in every model run, which can affect the final consensus direction. However our results are stated in terms of $X,V$, and independently of the specific initial configuration. For simulation purposes  the system is integrated in time with the specific feedback controller by means of a Runge-Kutta 4th-order scheme.

\noindent \textbf{Leader-based feedback.} The first case that we address is the one presented in Section \ref{sec:cuckersmale_localmean}, where the feedback is built upon local information and a single flock leader. In this case, the local feedback is defined as
\[\bm{u}_i=-(\bm{v}_i-\overline{\bm{v}}_i)\,,\quad\text{with}\quad \overline{\bm{v}}_i=(1-q)\bm{v}_i+q \bm{v}_1\,,\;i=1,\ldots,N\,,\]
where for convenience we have selected the first agent as the leader of the flock. Figure \ref{fig:1} shows the behavior of the flock depending on the parameter $q$, which represents the influence of the leader in the local average. Our result asserts that for $0< q\leq 1$, the system will converge to consensus independently of the initial configuration, which is illustrated by our numerical experiments, as shown in Figures \ref{fig:1} and \ref{fig:12} ; it can be observed that, the weaker the influence of the leader, the longer the flock takes to reach consensus.
\begin{figure}[!ht]
\centering
\resizebox{\textwidth}{!}{
\begin{tabular}{cc}
\epsfig{file=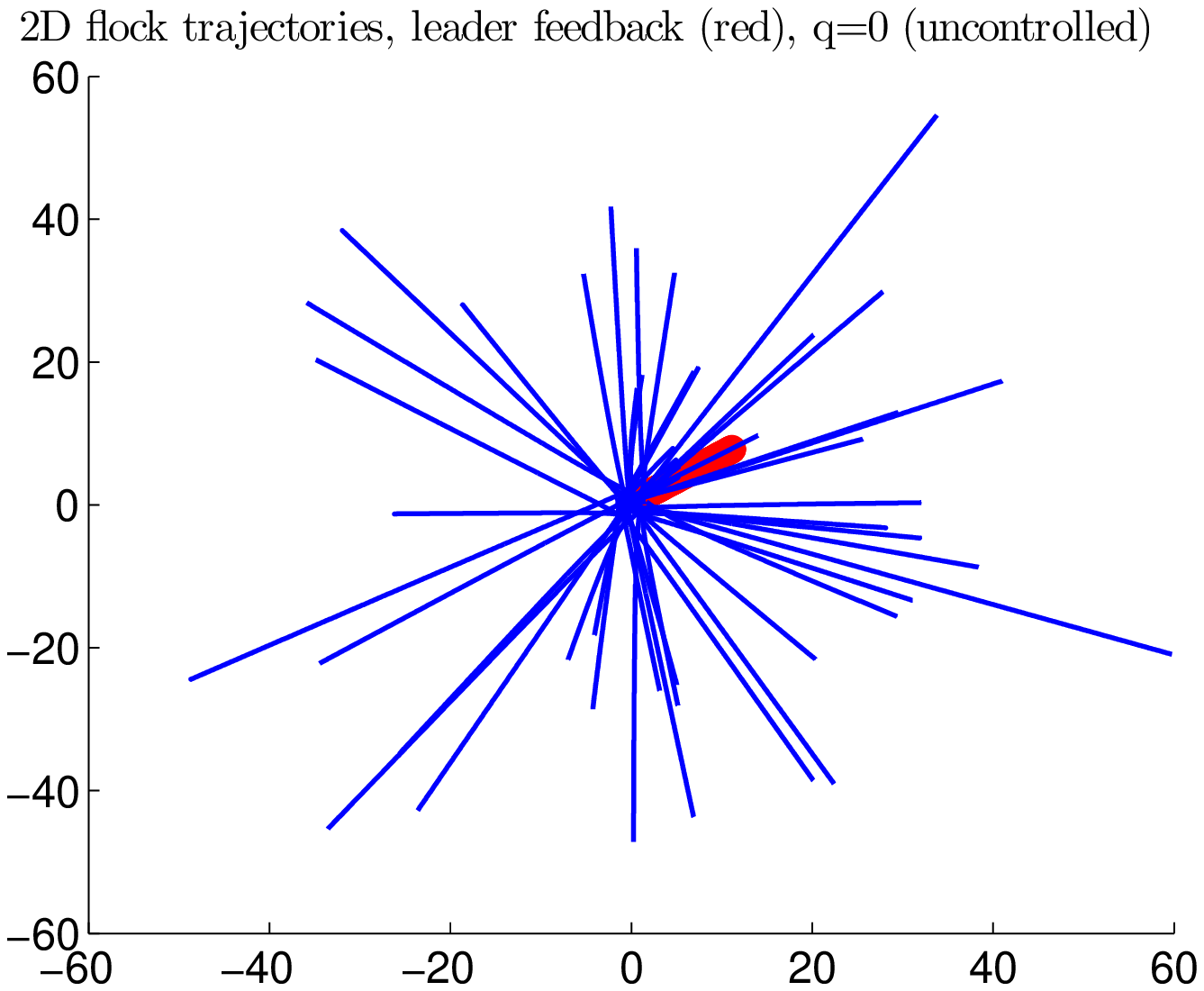,width=0.5\linewidth,clip=} &
\epsfig{file=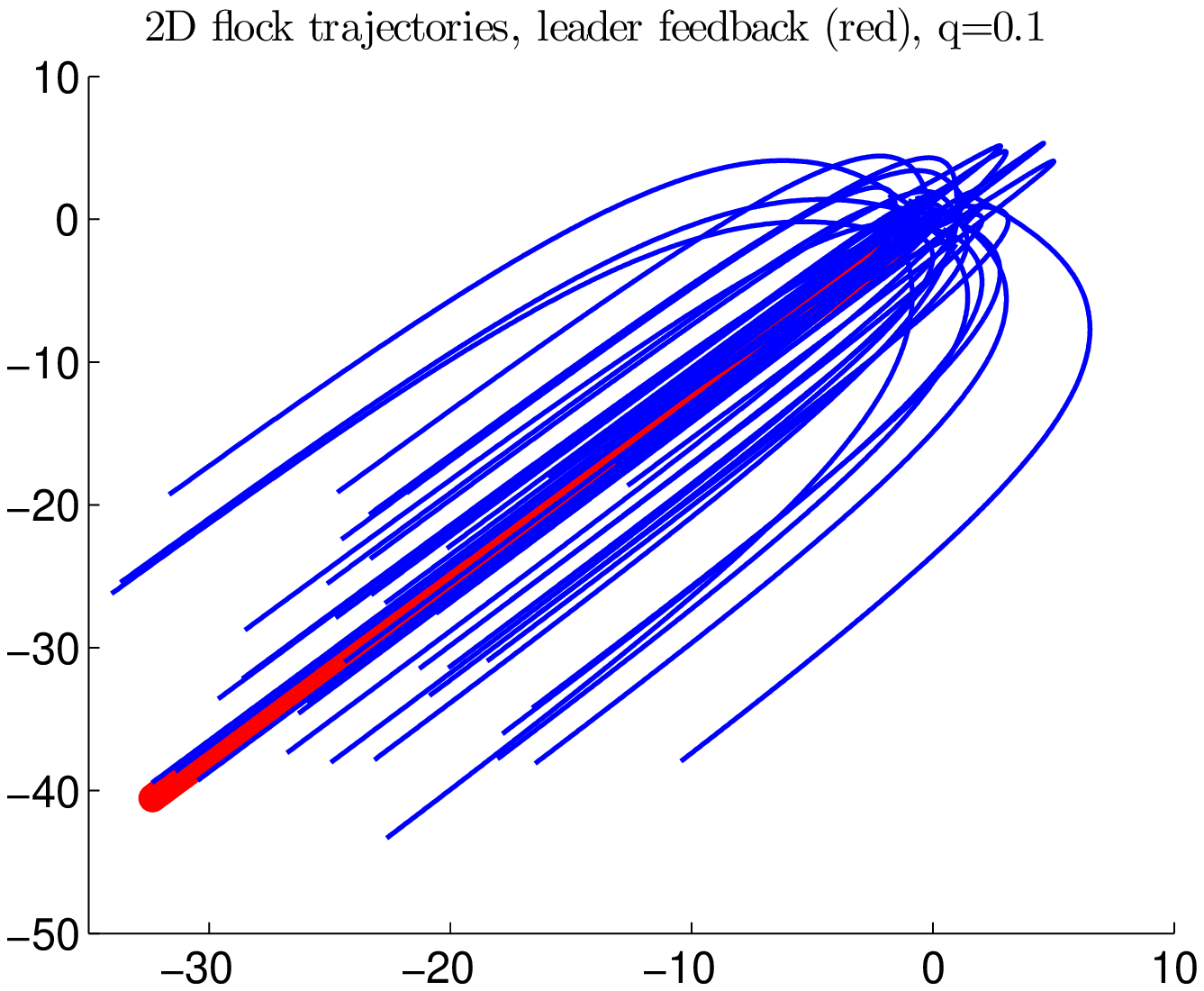,width=0.5\linewidth,clip=}\\
\epsfig{file=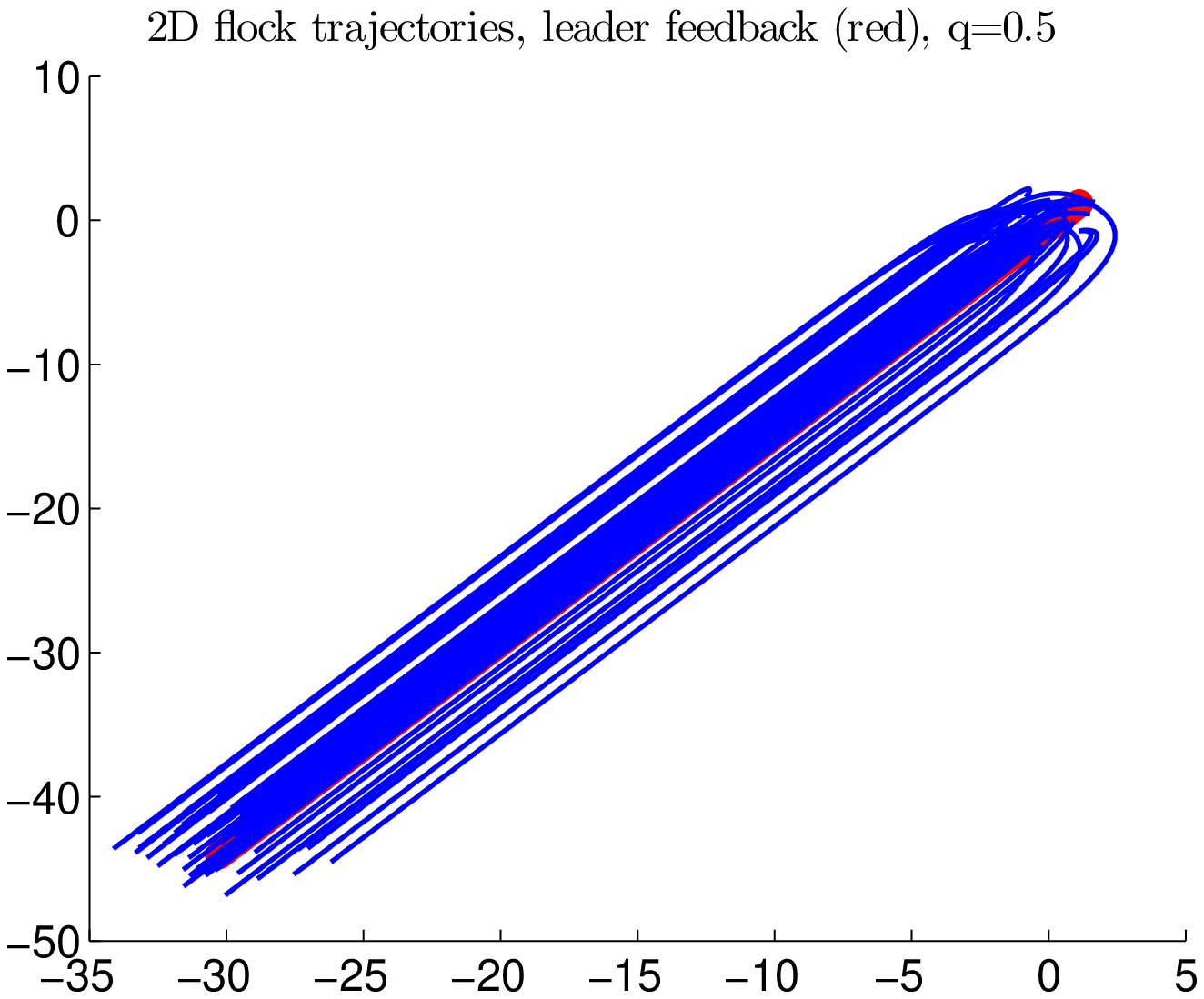,width=0.5\linewidth,clip=} &
\epsfig{file=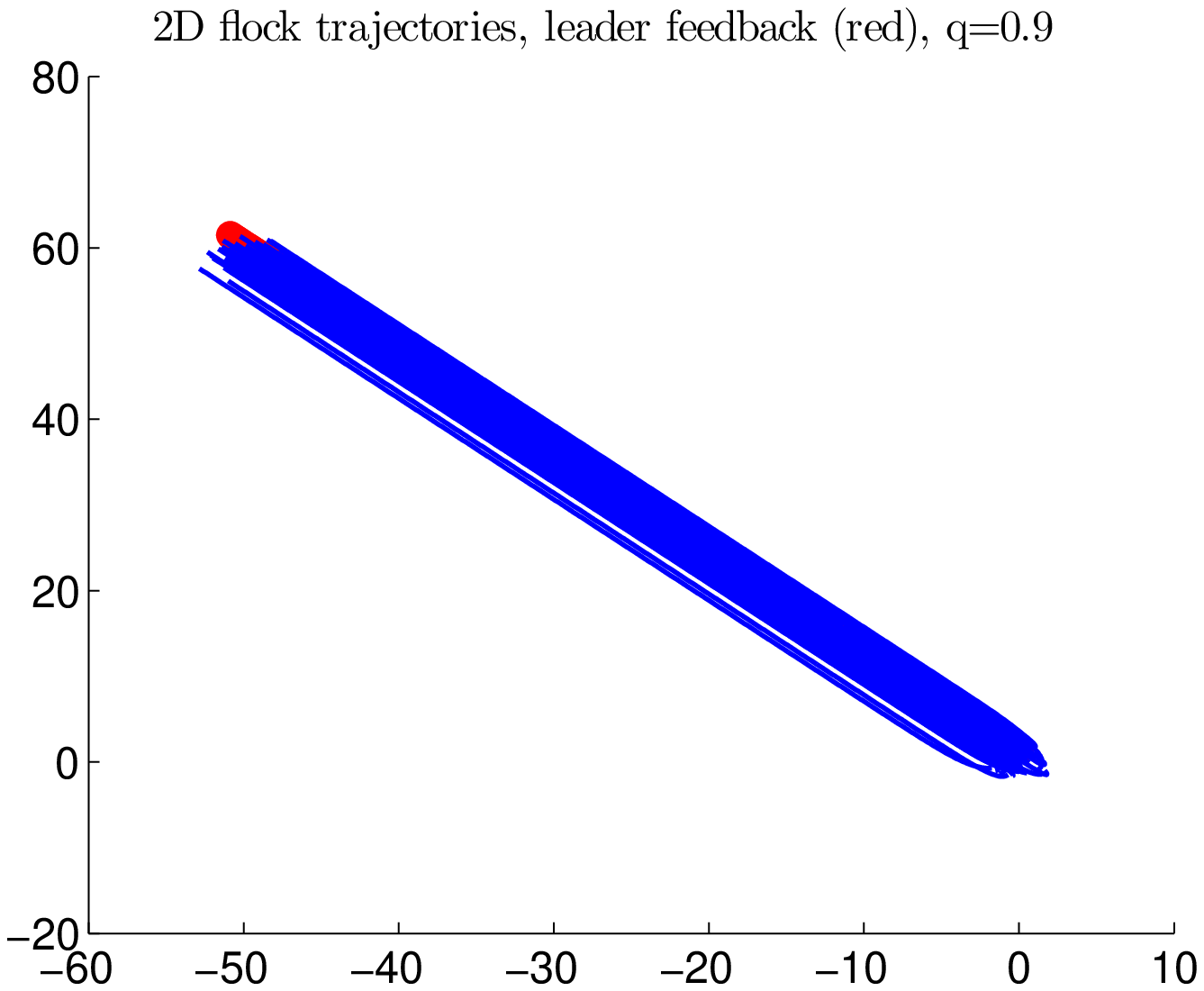,width=0.5\linewidth,clip=}
\end{tabular}}
\caption{Leader-based feedback control. Simulations with 100 agents, the value $q$ indicates the strength of the of the leader in the partial average. It can be observed how, as the strength of the leader is increased, convergent behavior is improved.}
\label{fig:1}
\end{figure}

\begin{figure}[!ht]
\centering
\resizebox{\textwidth}{!}{
\begin{tabular}{cc}
\epsfig{file=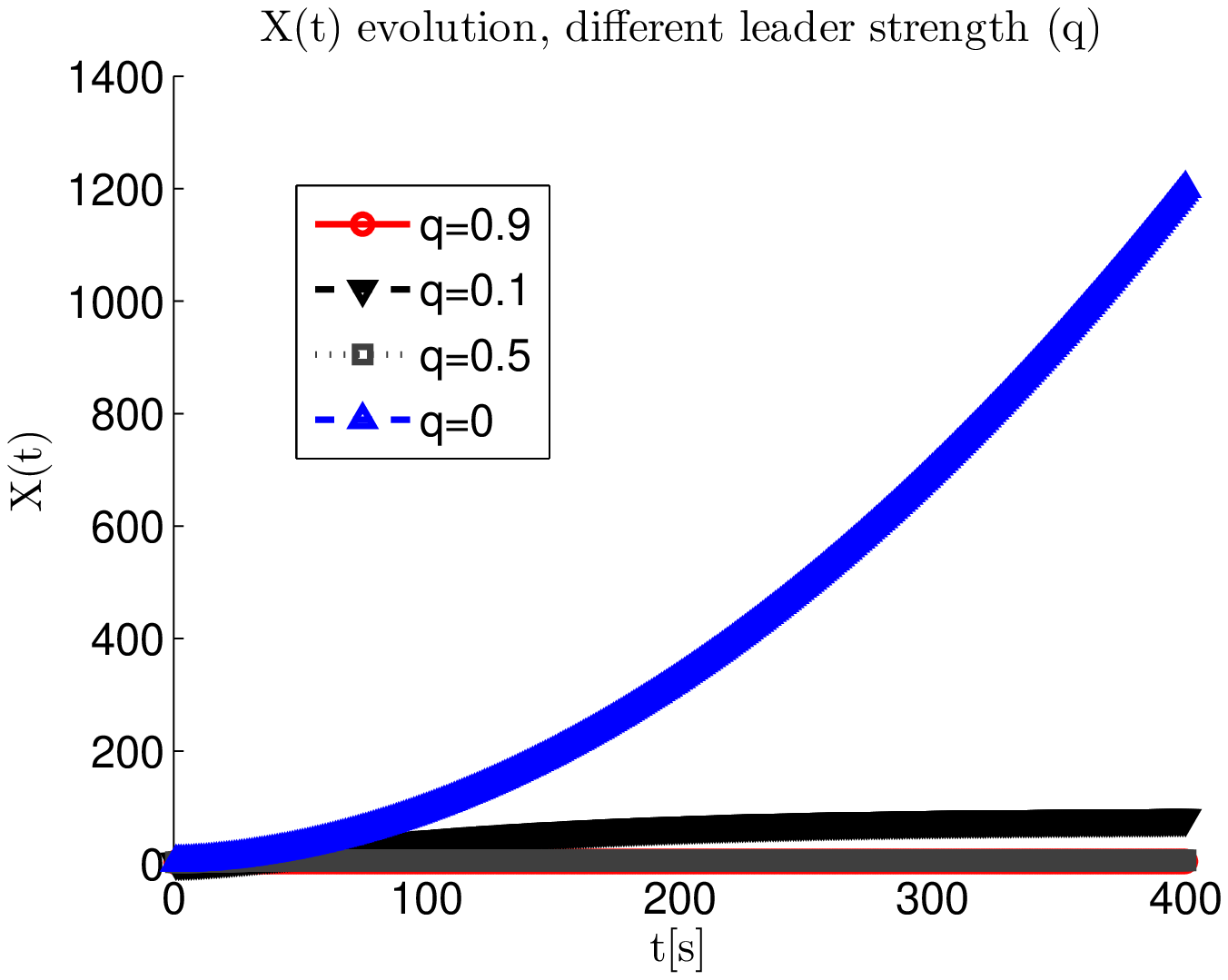,width=0.5\linewidth,clip=} &
\epsfig{file=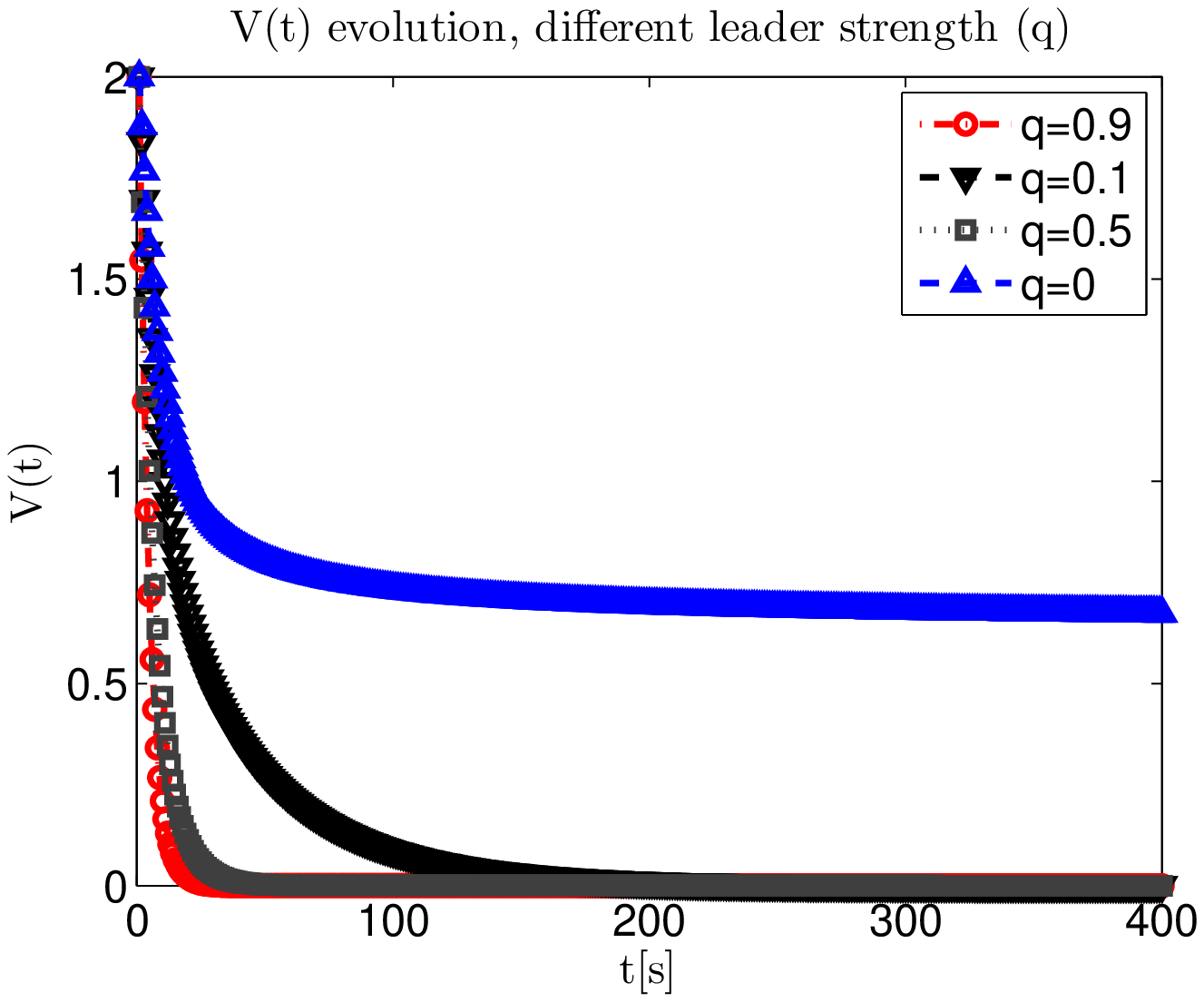,width=0.5\linewidth,clip=}
\end{tabular}}
\caption{Leader-based feedback control. Simulations with 100 agents, the value $q$ indicates the strength of the of the leader in the partial average. Evolution of $X(t)$ and $V(t)$ for the simulations in Figure \ref{fig:1}.}
\label{fig:12}
\end{figure}
\begin{figure}[!ht]
\centering
\resizebox{\textwidth}{!}{
\begin{tabular}{cc}
\epsfig{file=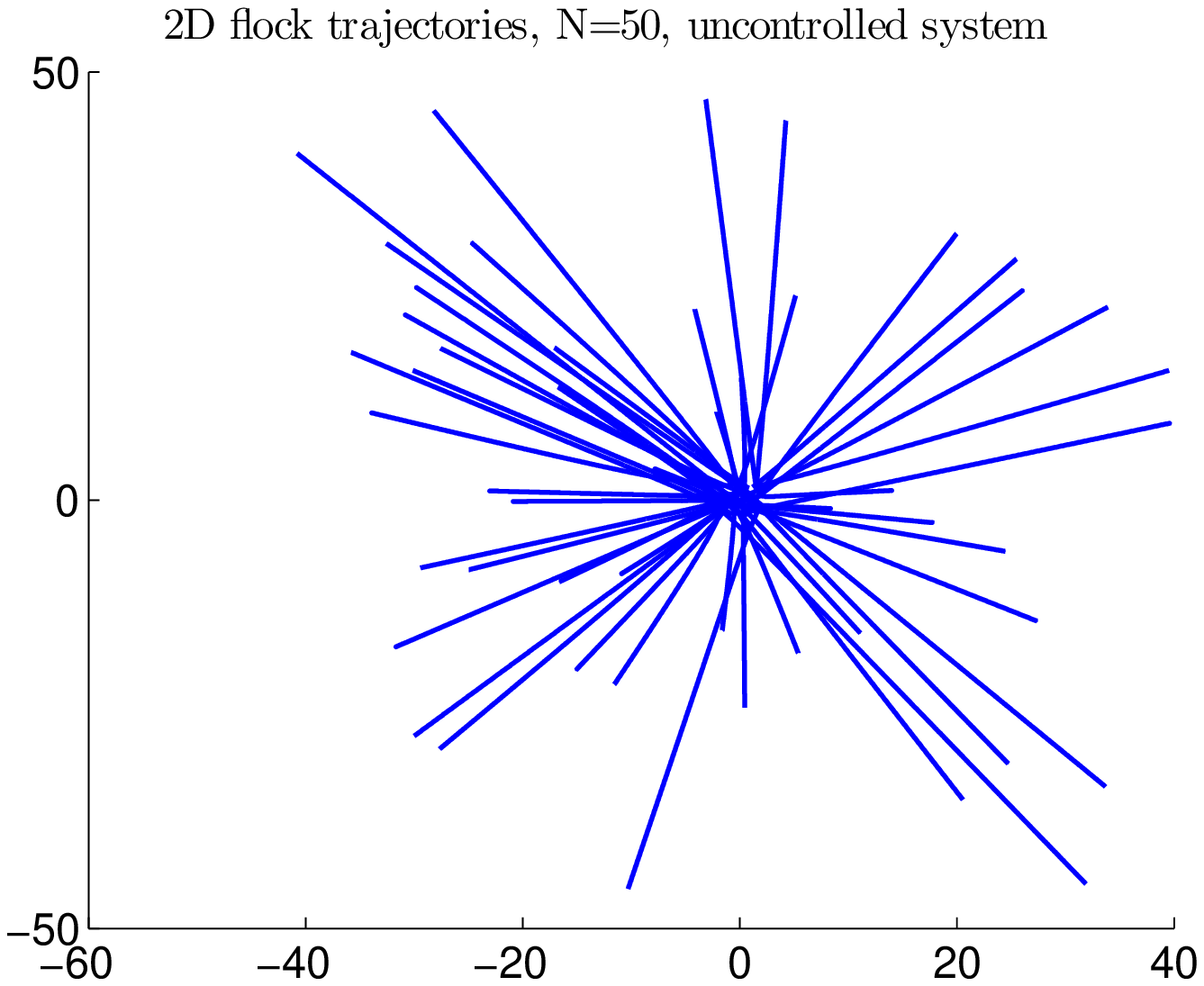,width=0.5\linewidth,clip=} &
\epsfig{file=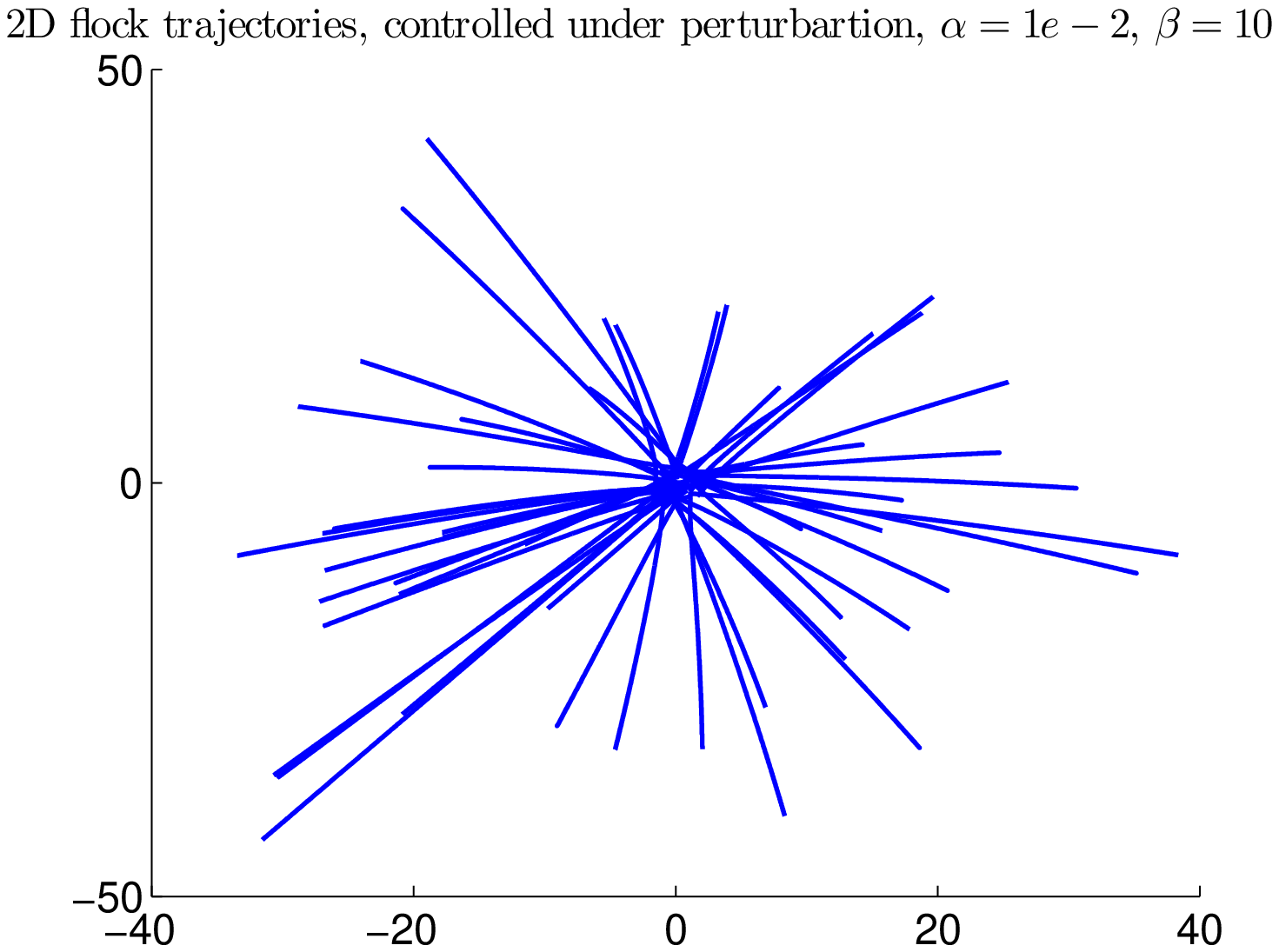,width=0.5\linewidth,clip=}\\
\epsfig{file=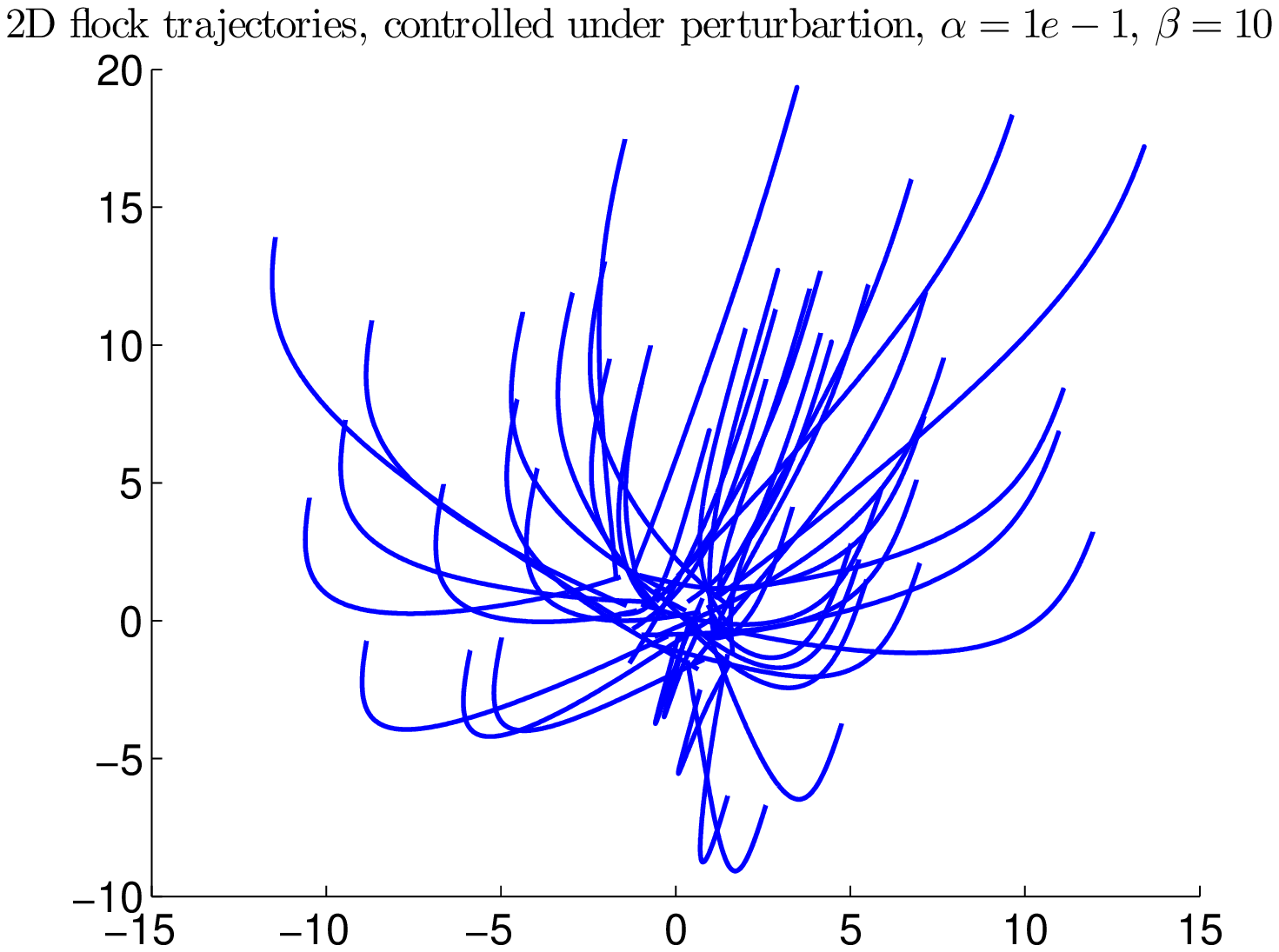,width=0.5\linewidth,clip=} &
\epsfig{file=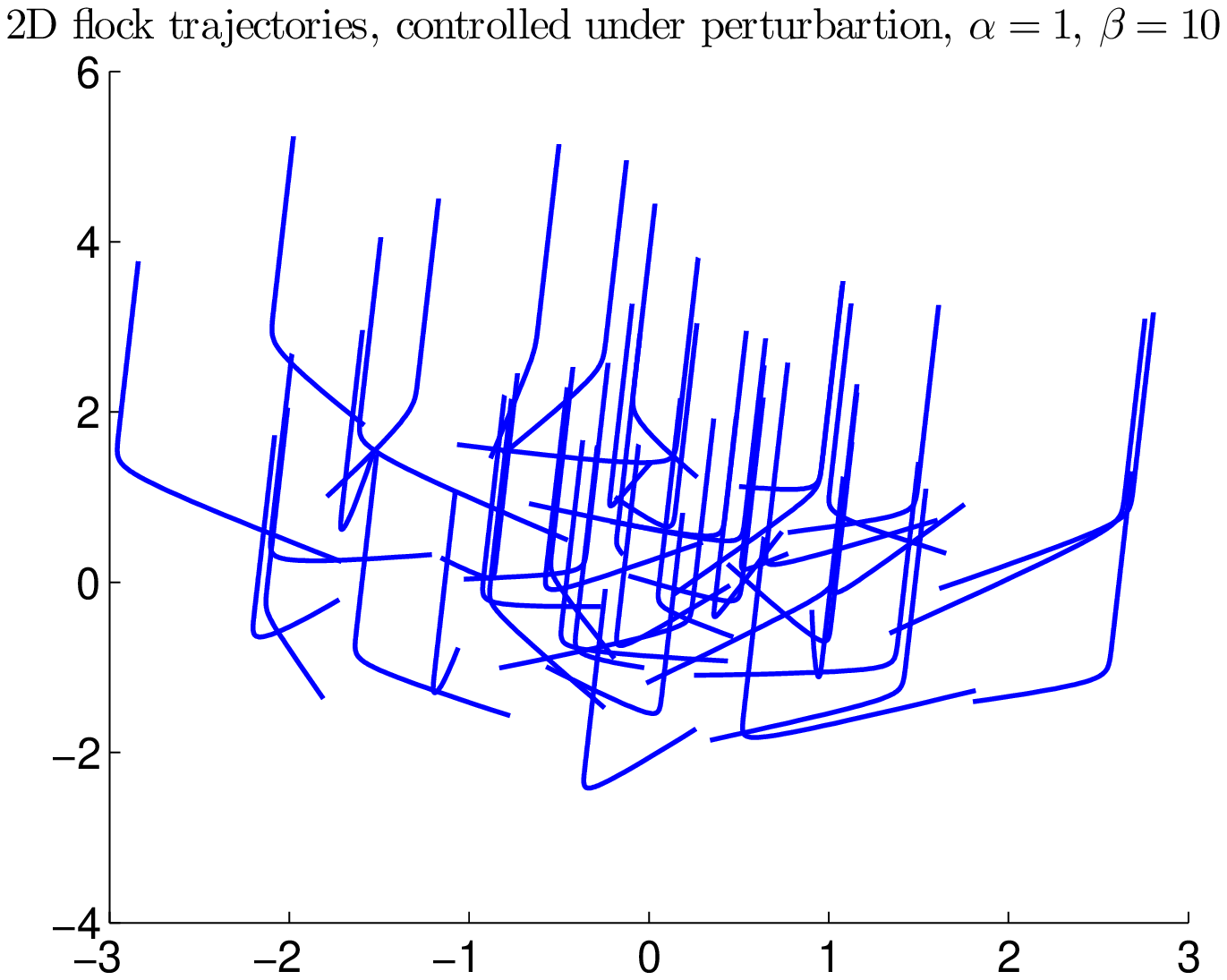,width=0.5\linewidth,clip=}
\end{tabular}}
\caption{Total feedback control under structured perturbations. For a fixed strong structured perturbation term ($\beta=10$), different energies for the unperturbed control term $\alpha$ generate different consensus behavior; the stronger the correct information term is, the faster consensus is achieved.}
\label{fig:2}
\end{figure}

\noindent\textbf{Feedback under perturbed information.} Next, we deal with the setting presented again in Section \ref{sec:cuckersmale_localmean}, by considering a feedback of the form
\[
\bm{u}_i=-\alpha(\bm{v}_i-\overline{\bm{v}})-\beta\Delta_i\,,\quad\overline{\bm{v}}=\frac{1}{N}\sum_{i=1}^N\bm{v}_i\,,
\]
where $\Delta_i=\Delta_i(t)$ represents a structured perturbation written as
\[
\Delta_i=\frac{1}{\eta_i(t)}\sum_{j=1}^N\omega_{ij}(\bm{v}_j-\overline{\bm{v}})\,.
\]
In particular, we address the case where the weighting function $\omega_{ij}$ corresponds to similar Cucker-Smale kernel as for the dynamics, i.e.,
\[\omega_{ij}=\frac{1}{(1+||\bm{x}_i-\bm{x}_j||^2)^{\epsilon}}\,,\quad \eta_i=\frac{1}{N}\sum_{j=1}^N \omega_{ij}\]
In this test, we fix a large value of $\beta=10$, representing a strong perturbation of the feedback, and a small value of $\epsilon=1e-5$, related to a disturbance which is distributed among all the agents. In Figures \ref{fig:2} and \ref{fig:22} , it is shown how increasing the value of $\alpha$, representing the energy of the \textsl{correct information feedback}, induces faster consensus emergence.

\begin{figure}[!ht]
\centering
\resizebox{\textwidth}{!}{
\begin{tabular}{cc}
\epsfig{file=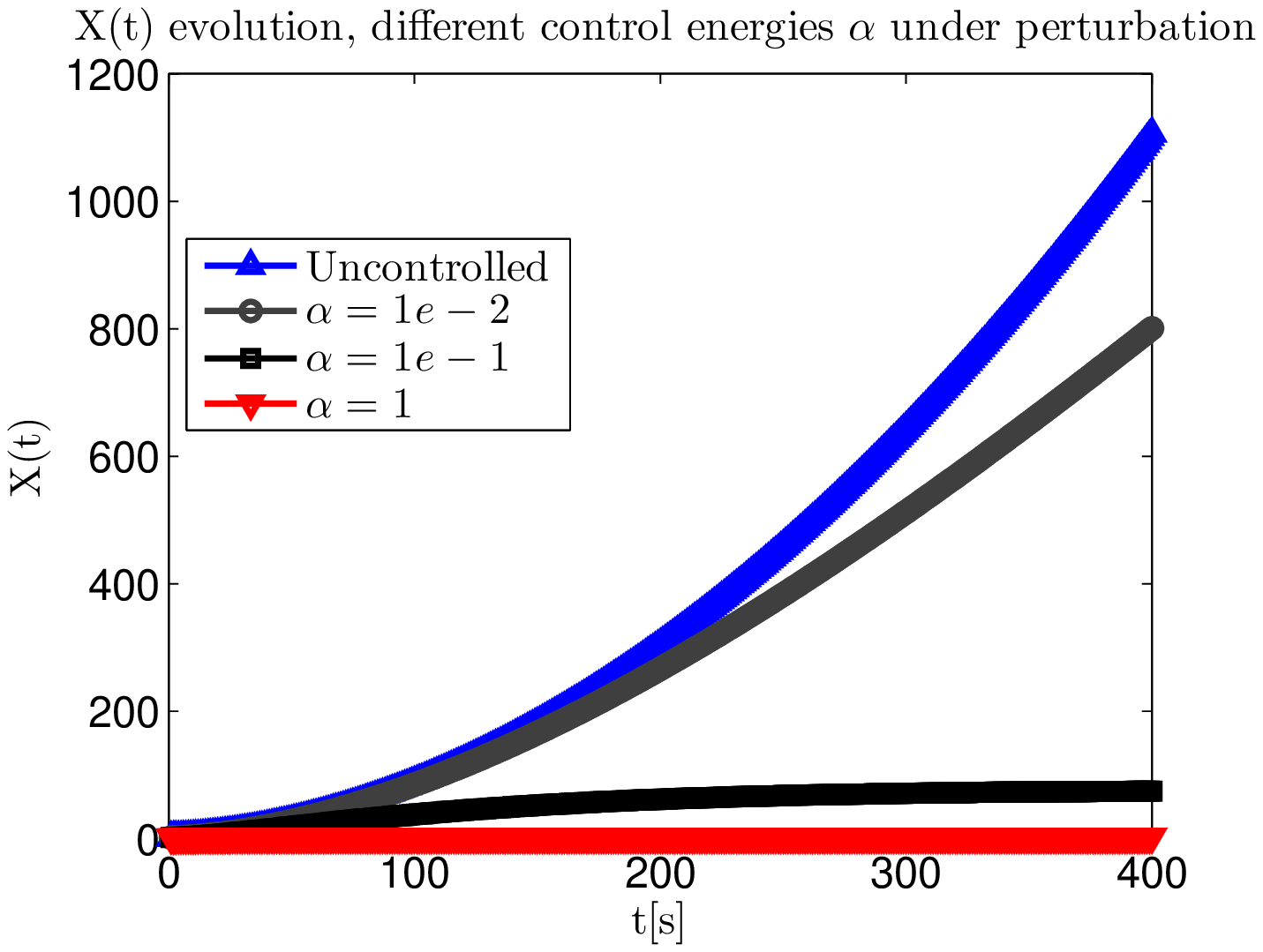,width=0.5\linewidth,clip=} &
\epsfig{file=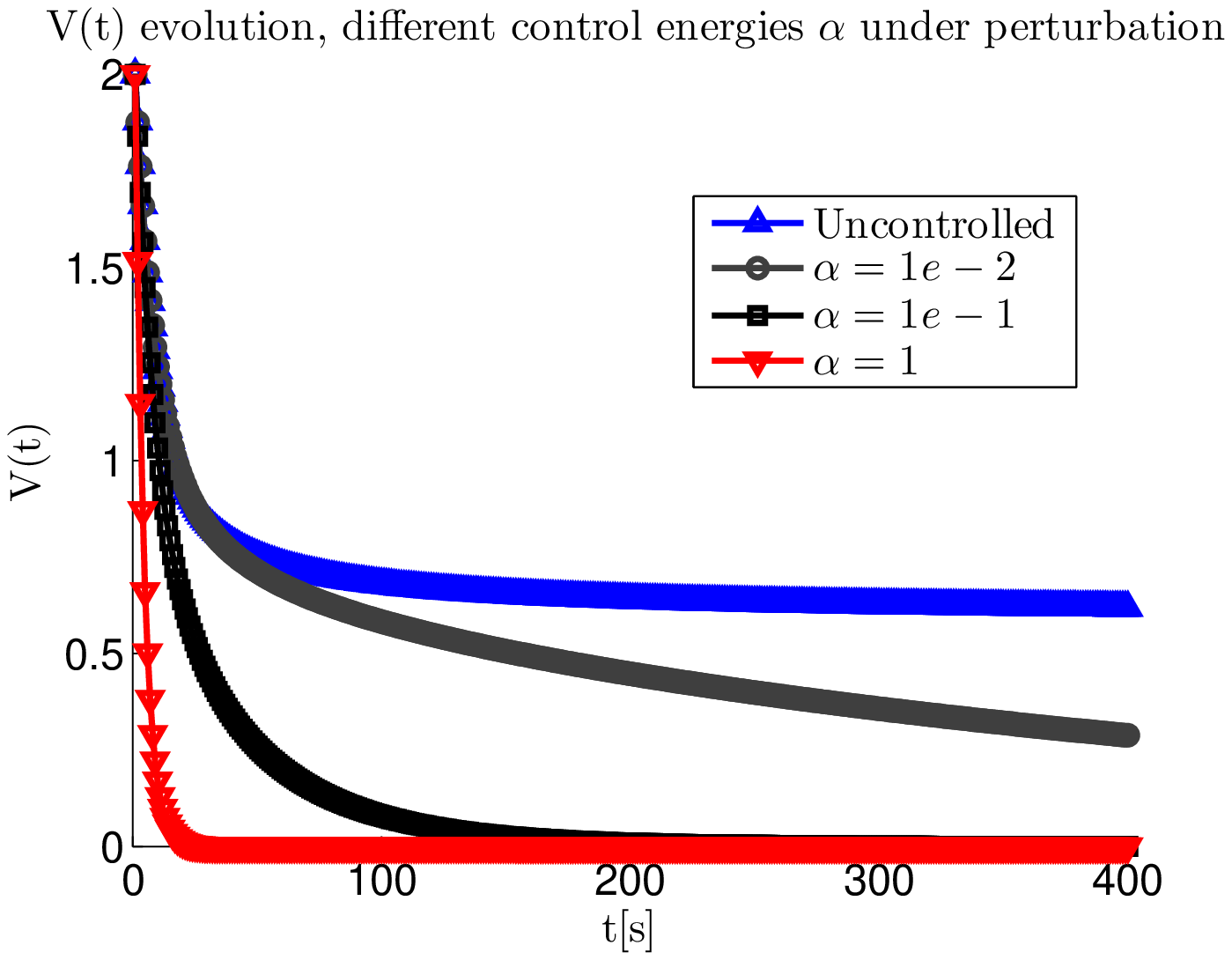,width=0.5\linewidth,clip=}
\end{tabular}}
\caption{Total feedback control under structured perturbations. Evolution of $X(t)$ and $V(t)$ for the simulations in Figure \ref{fig:2}.}
\label{fig:22}
\end{figure}

\noindent\textbf{Local feedback control.} The last test case studies the results presented in Section \ref{sec:localmeanR}, where the feedback is computed according to the local average in eq.\eqref{eq:truncated_perturbation}. Simulations in Figure \ref{fig:3} illustrate the setting. From an uncontrolled system, represented by a local feedback radius $R=0$, by increasing this quantity, partial flocking is consistently achieved, until  full consensus is observed for large radii mimicking a total information feedback control. From a theoretical perspective, this result is presented in Theorem \ref{th:HaHaKimExtended}, which describes a sufficient consensus region for feedback control based on local averages. This theorem recovers on its asymptotics previous results in \cite{HaHaKim}, and \cite{CFPT}, related to consensus regions for uncontrolled and fully controlled systems under total information feedback, respectively.
It has been reported in the literature \cite{cafotove10}, that estimates for consensus regions such as the one provided by Theorem \ref{thm:hhk}, are not sharp in many situations. In this direction, we proceed to contrast the theoretical consensus estimates with the numerical evidence. For this purpose, for a fixed number of agents, we span a large set of possible initial configurations determined by different values of $(X,V)$. For every pair $(X,V)$ we randomly generate a set of 20 initial conditions, and we simulate for a sufficiently large time frame. We measure consensus according to a threshold established on the final value of $V$; we consider that consensus has been achieved if the final value of $V$ is lower or equal to $1e-5$. We proceed by computing empirical probabilities of consensus for every point of our state space $(X,V)$;  results in this direction are presented in Figures \ref{fig:4} and \ref{fig:5}. We first consider the simplified case of 2 agents; according to \cite{CFPT}, for this particular case, the consensus region estimate provided by Theorem \ref{thm:hhk} is sharp, as illustrated is by the results presented in Figure \ref{fig:4}. Furthermore, it is also the case for Theorem \ref{th:HaHaKimExtended}; for $R>0$, the consensus region predicted by the theorem coincides with the numerically observed ones.
\begin{figure}[!ht]
\centering
\resizebox{\textwidth}{!}{
\begin{tabular}{cc}
\epsfig{file=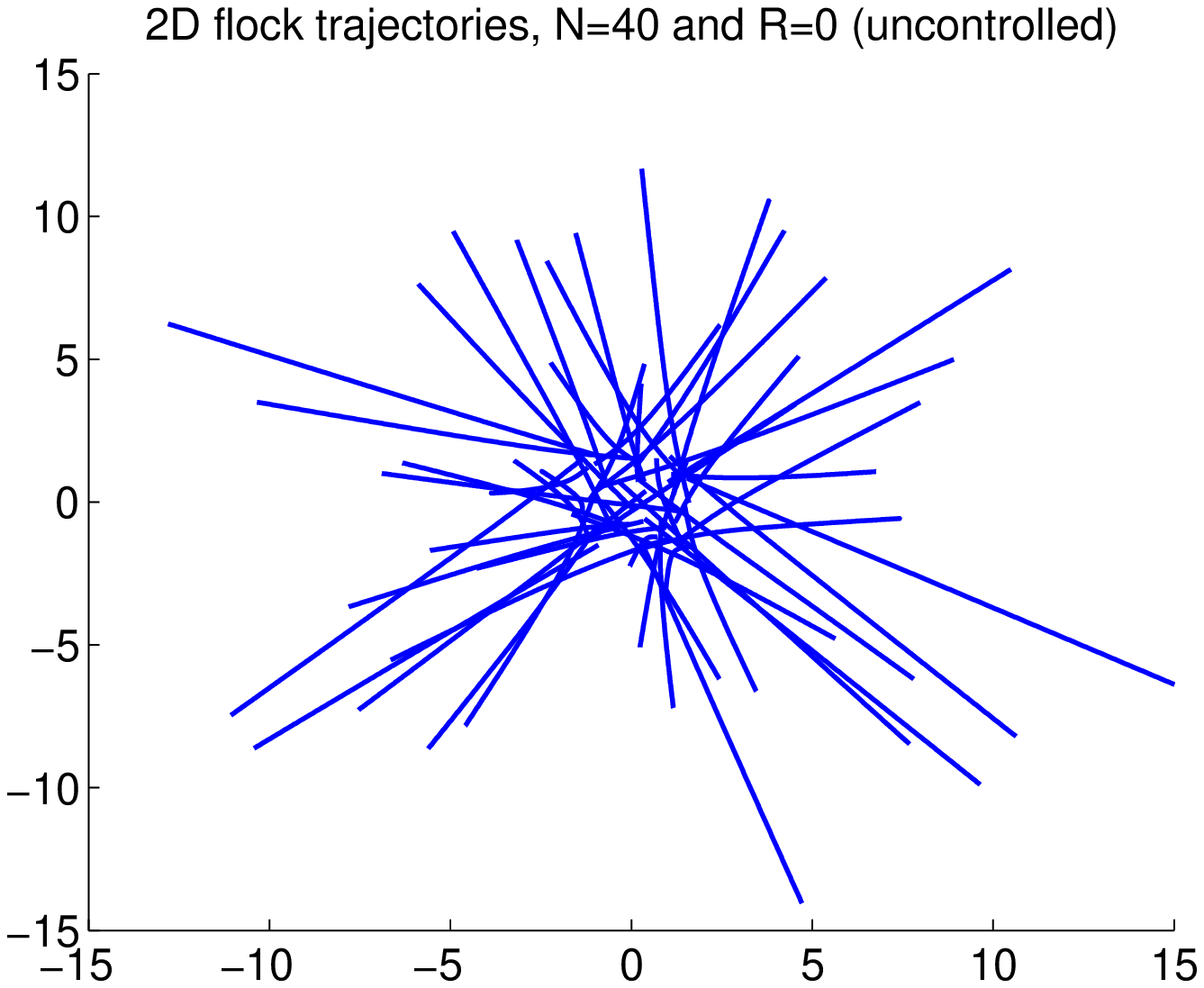,width=0.5\linewidth,clip=} &
\epsfig{file=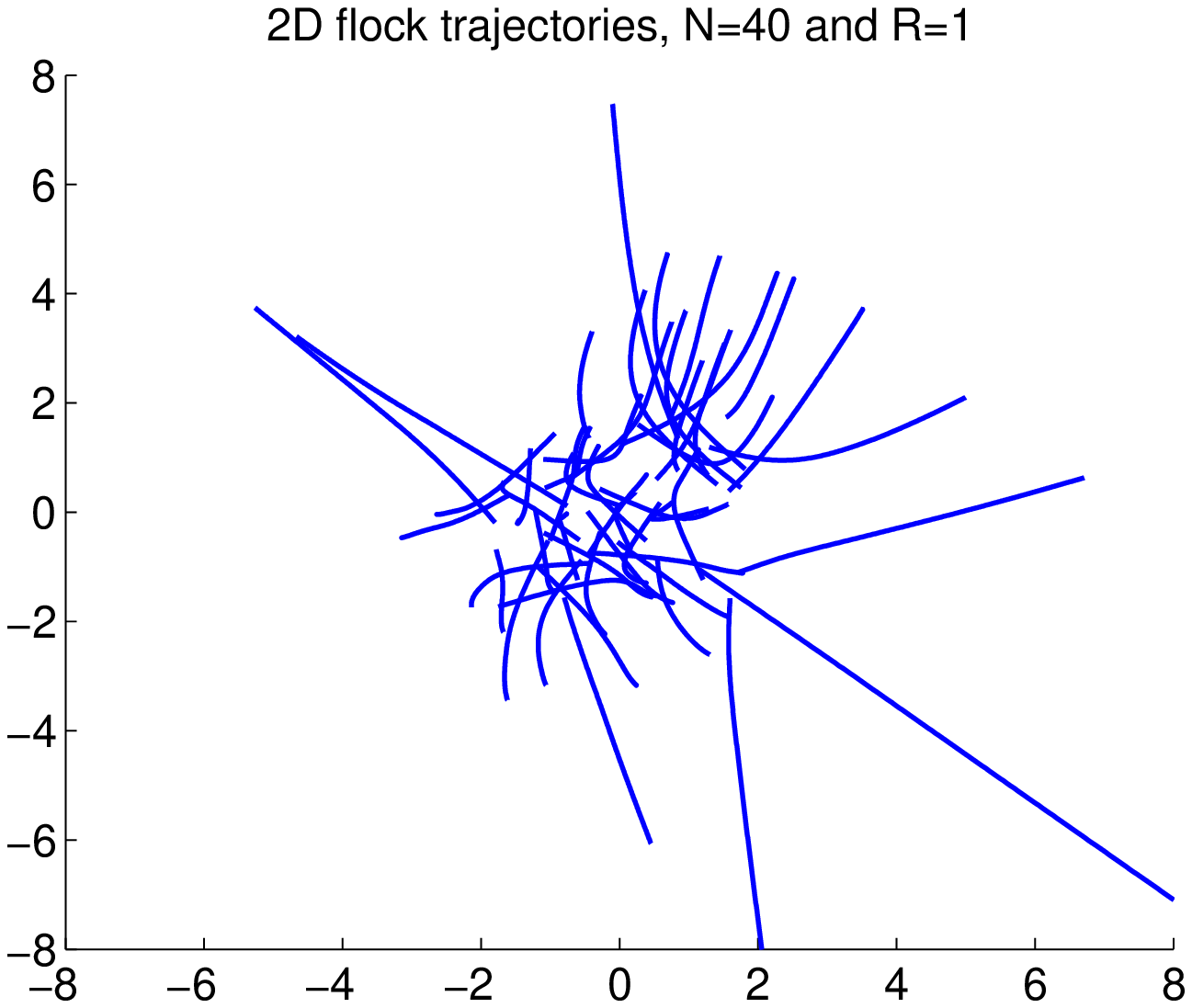,width=0.5\linewidth,clip=}\\
\epsfig{file=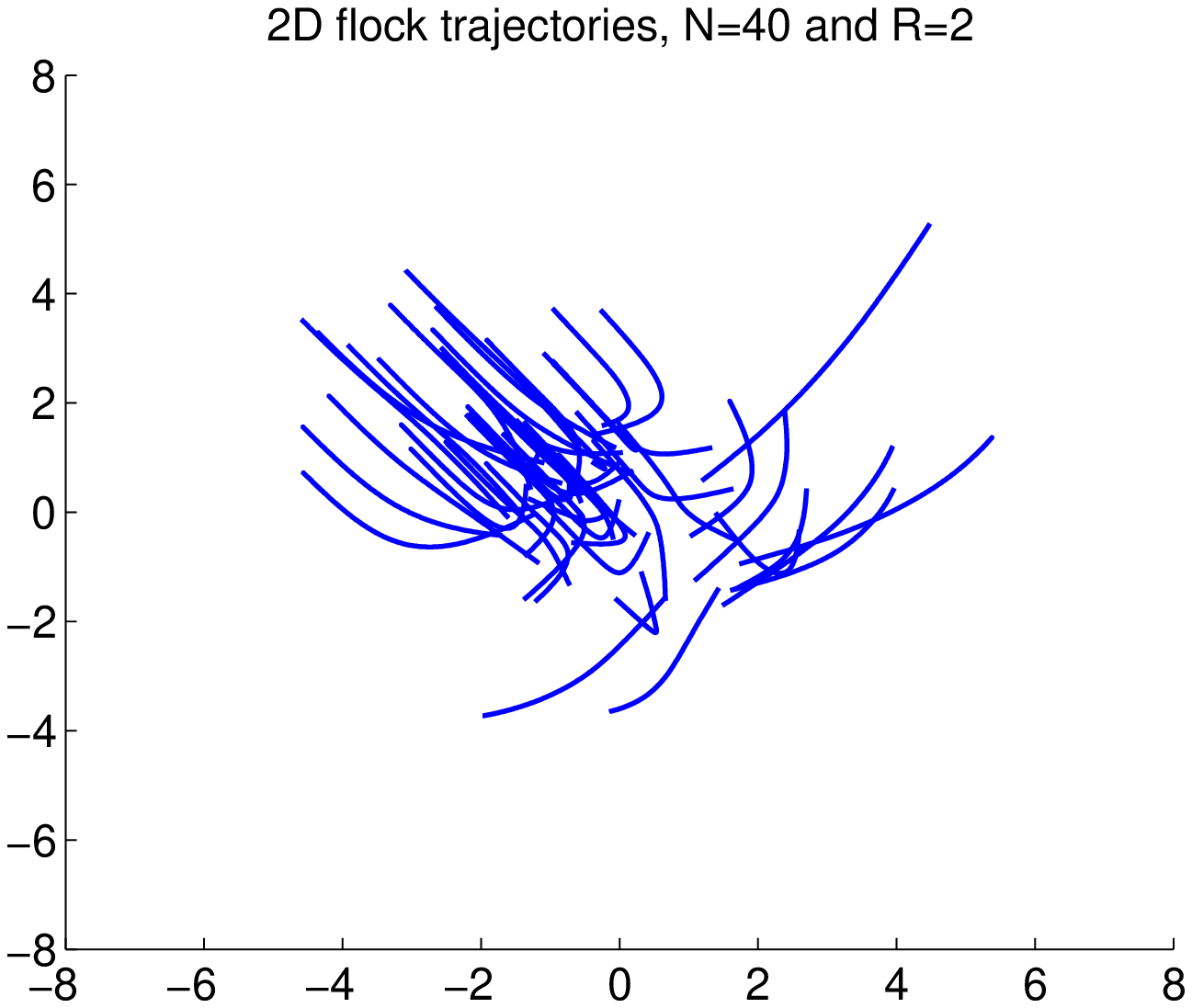,width=0.5\linewidth,clip=} &
\epsfig{file=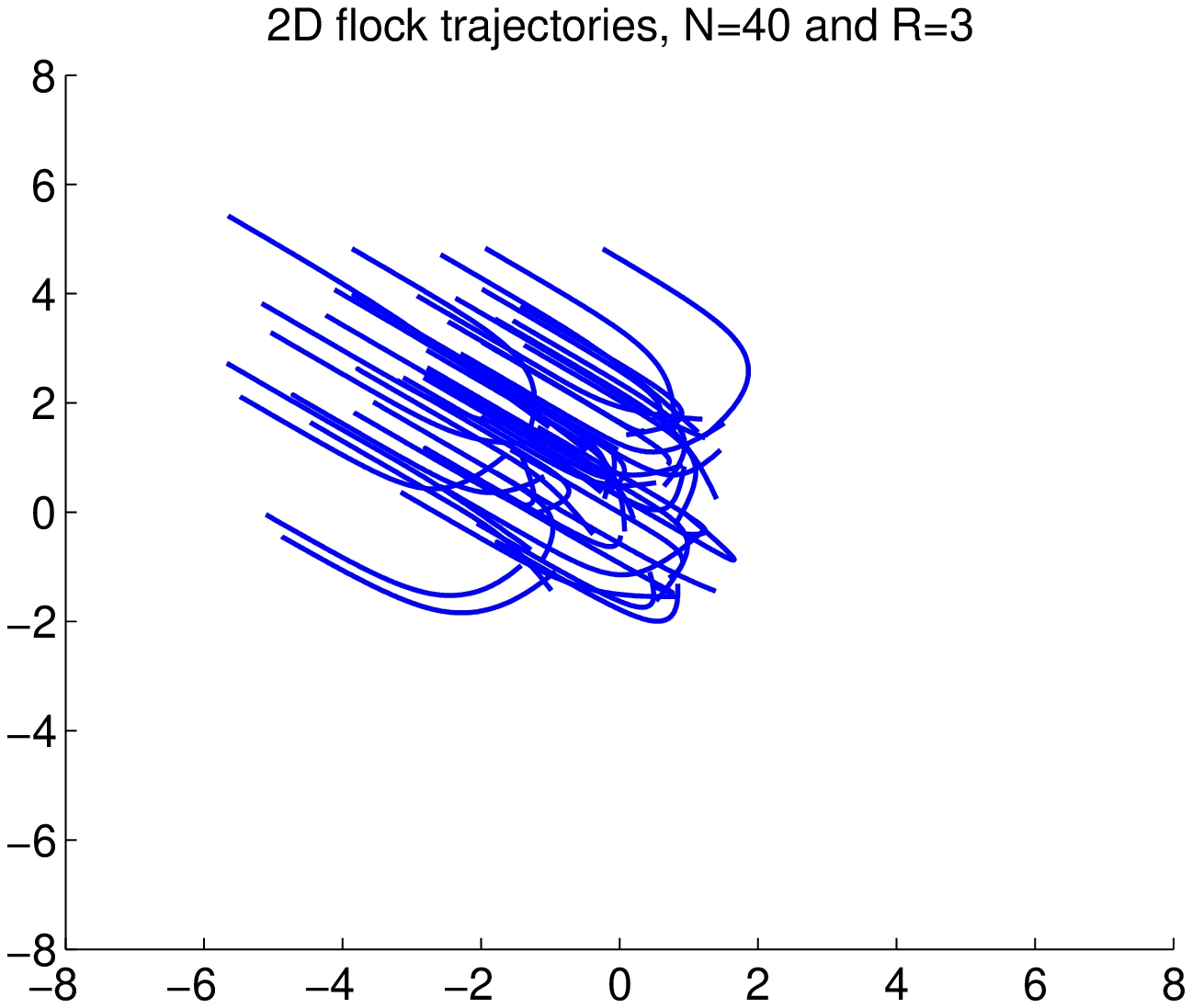,width=0.5\linewidth,clip=}\\
\epsfig{file=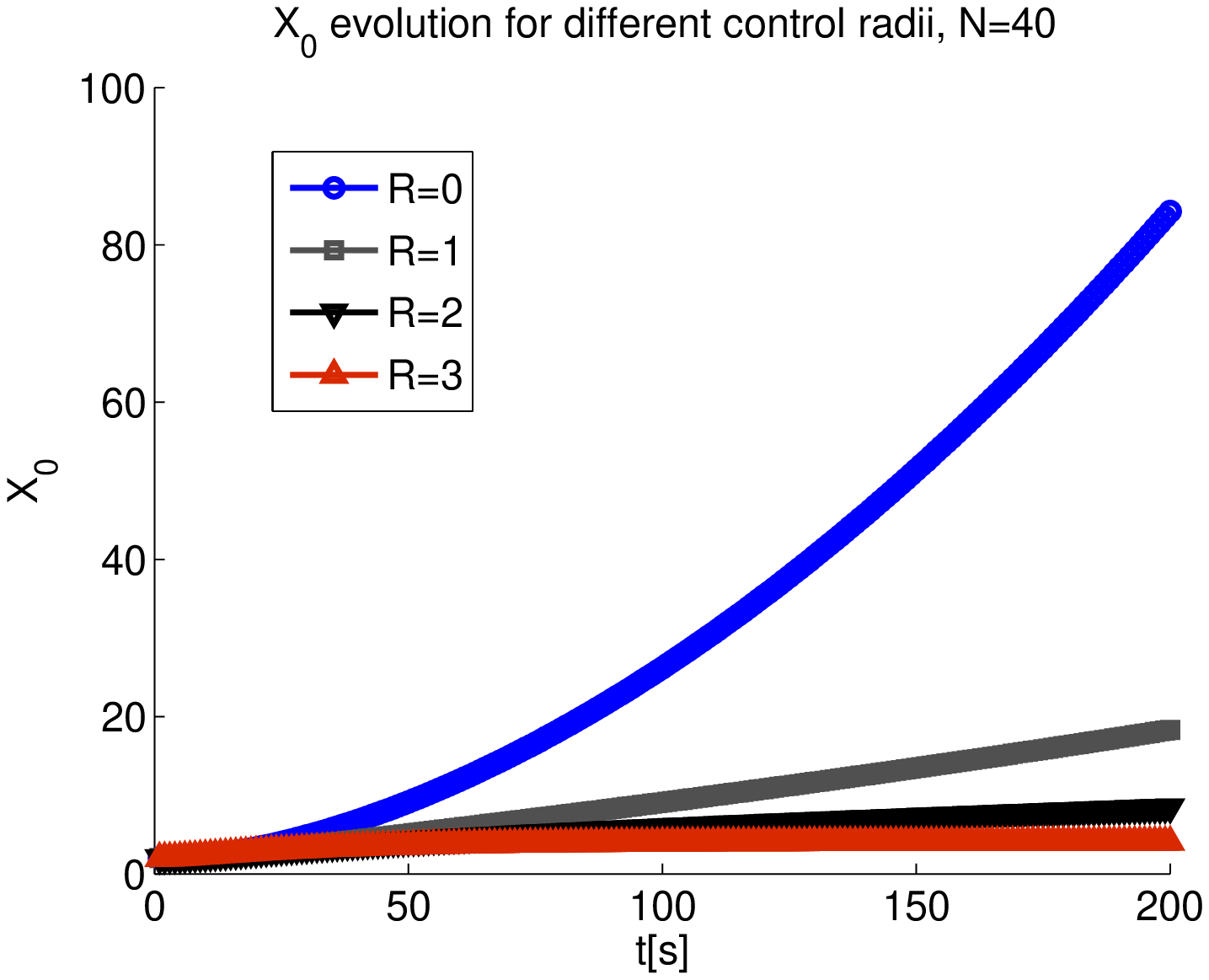,width=0.5\linewidth,clip=} &
\epsfig{file=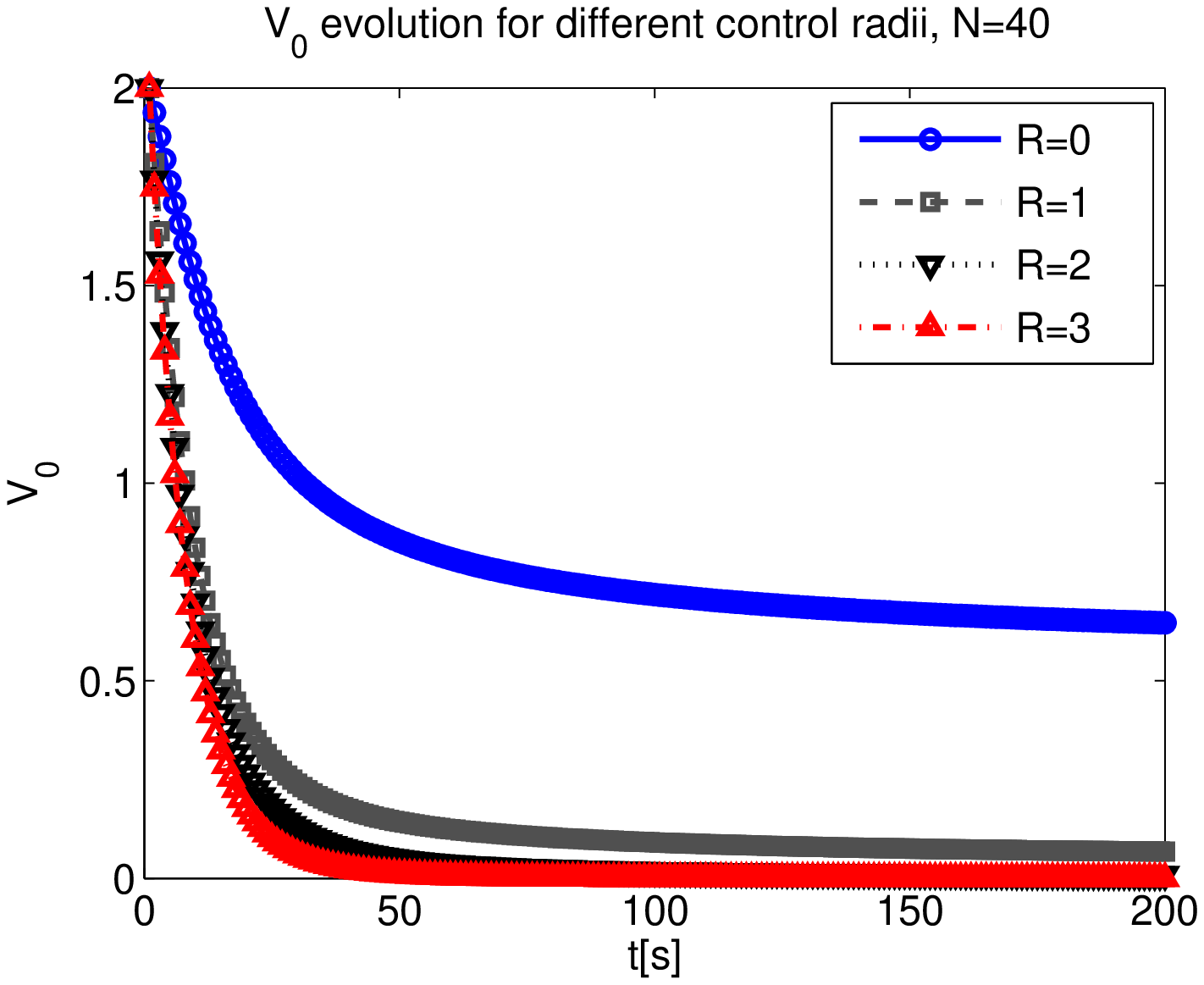,width=0.5\linewidth,clip=}
\end{tabular}}
\caption{Local feedback control. Simulations with $N=40$ agents, and  different control radii $R$. By increasing the value of $R$ the systems transits from uncontrolled behavior, to partial flocking, up to total, fast flocking.}
\label{fig:3}
\end{figure}

\begin{figure}
\centering
\resizebox{\textwidth}{!}{
\begin{tabular}{cc}
\epsfig{file=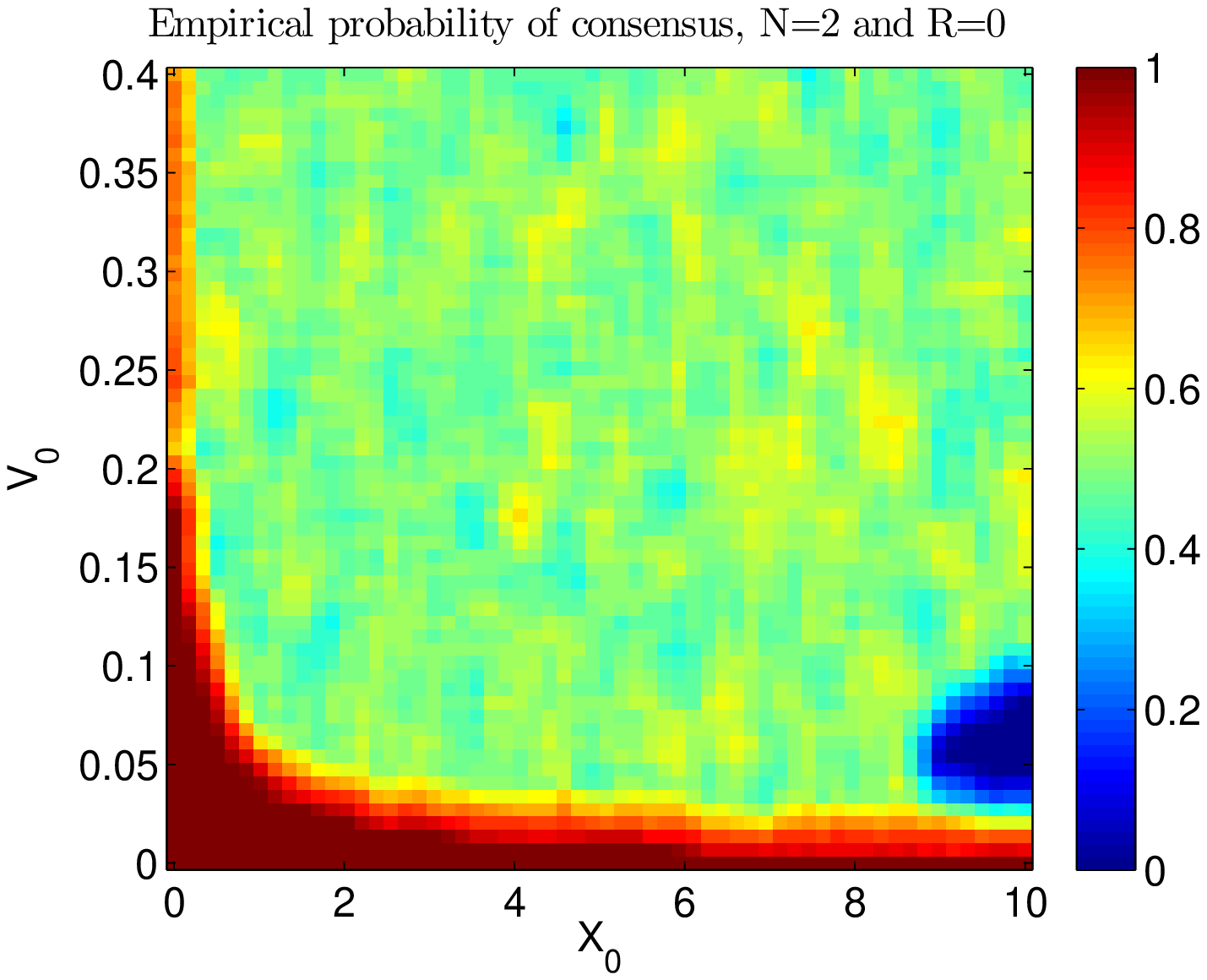,width=0.5\linewidth,clip=} &
\epsfig{file=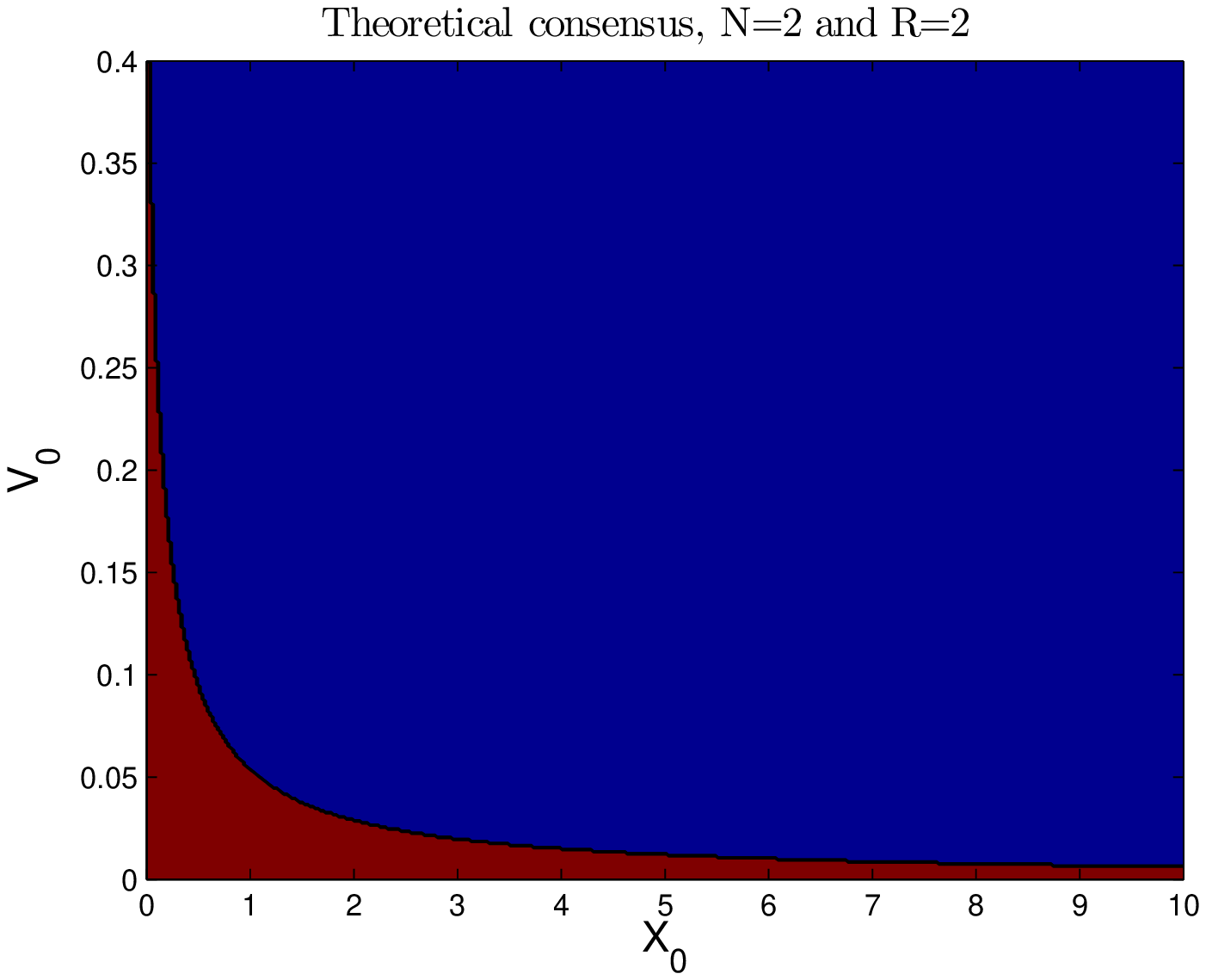,width=0.5\linewidth,clip=}\\
\epsfig{file=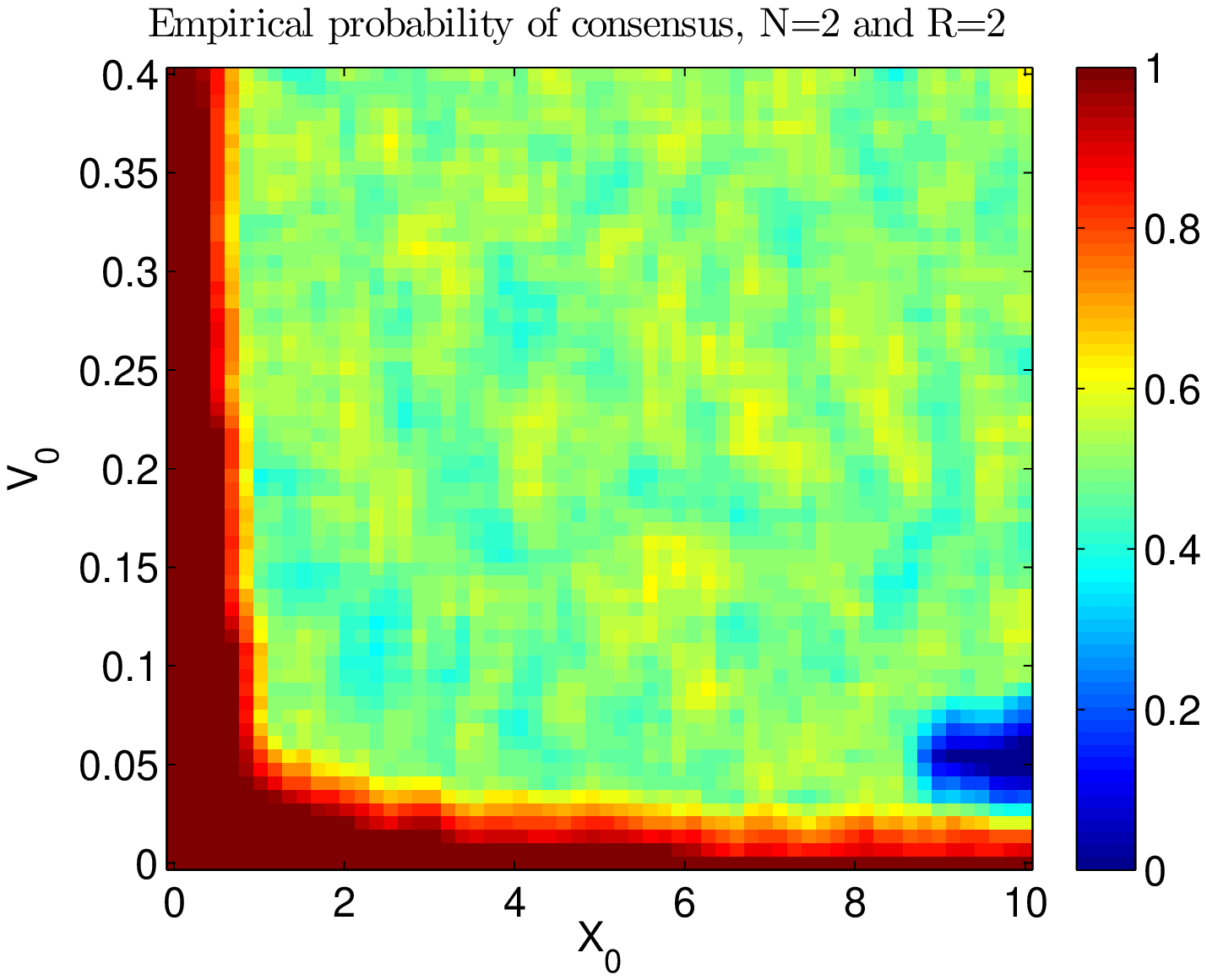,width=0.5\linewidth,clip=} &
\epsfig{file=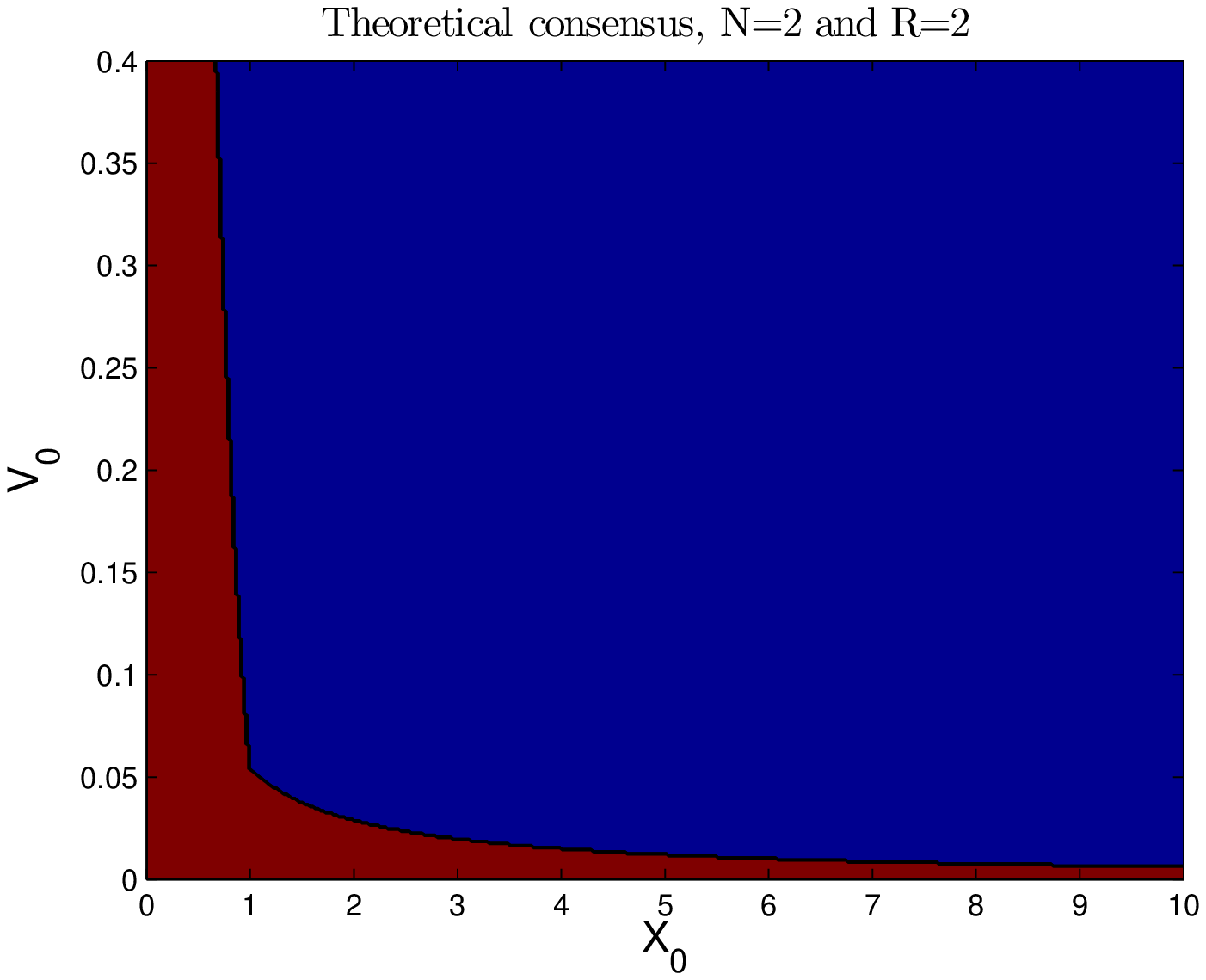,width=0.5\linewidth,clip=}
\end{tabular}}
\caption{Local feedback control. Empirical consensus regions and theoretical estimates for two-agent systems.}
\label{fig:4}
\end{figure}
Figure \ref{fig:5} illustrates the case when a larger number of agents is considered. In a similar way as for Theorem \ref{thm:hhk}, the consensus region estimate is conservative if compared with the region where numerical experiments exhibit convergent behavior. Nevertheless, Theorem \ref{th:HaHaKimExtended} is consistent in the sense that the theoretical consensus region increases gradually as $R$ grows, eventually covering any initial configuration, which is the case of the total information feedback control, as presented in \cite[Proposition 2]{CFPT}. The numerical experiments also confirm this phenomena, as shown in Figure \ref{fig:6} , where contour lines showing the $80\%$ probability of consensus for different radii locate farther from the origin as $R$ increases.
\begin{figure}
\centering
\resizebox{\textwidth}{!}{
\begin{tabular}{cc}
\epsfig{file=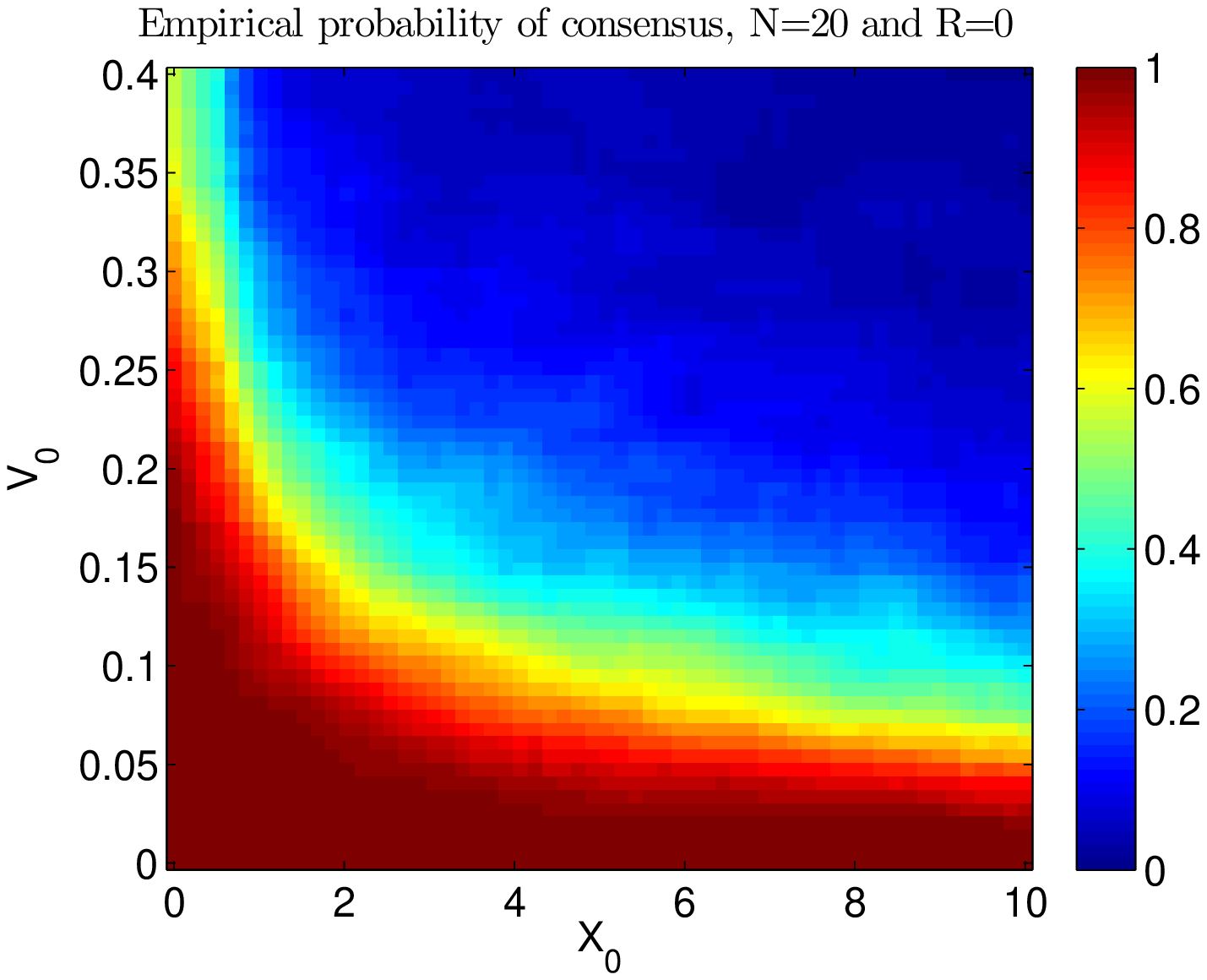,width=0.5\linewidth,clip=} &
\epsfig{file=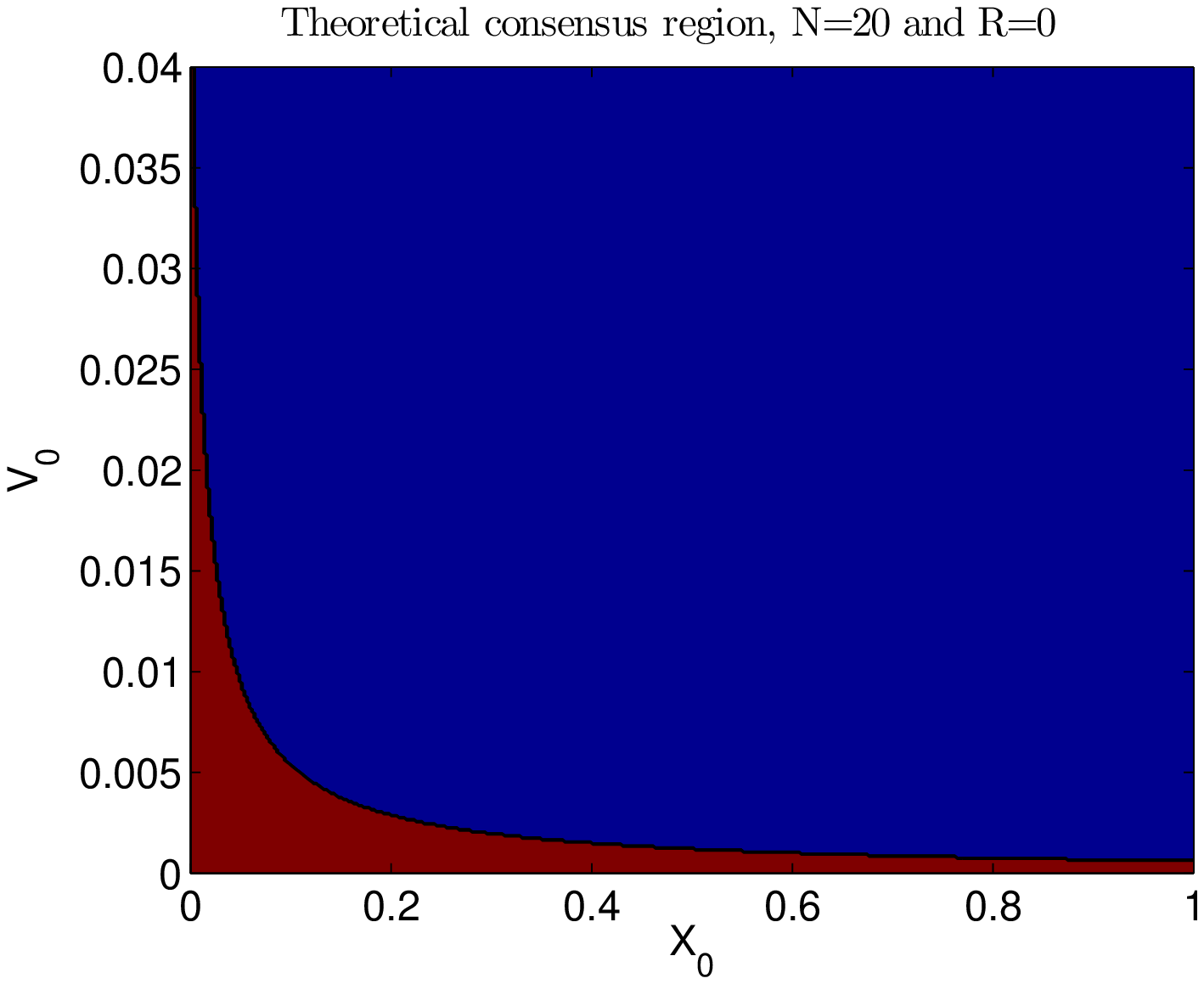,width=0.5\linewidth,clip=} \\
\epsfig{file=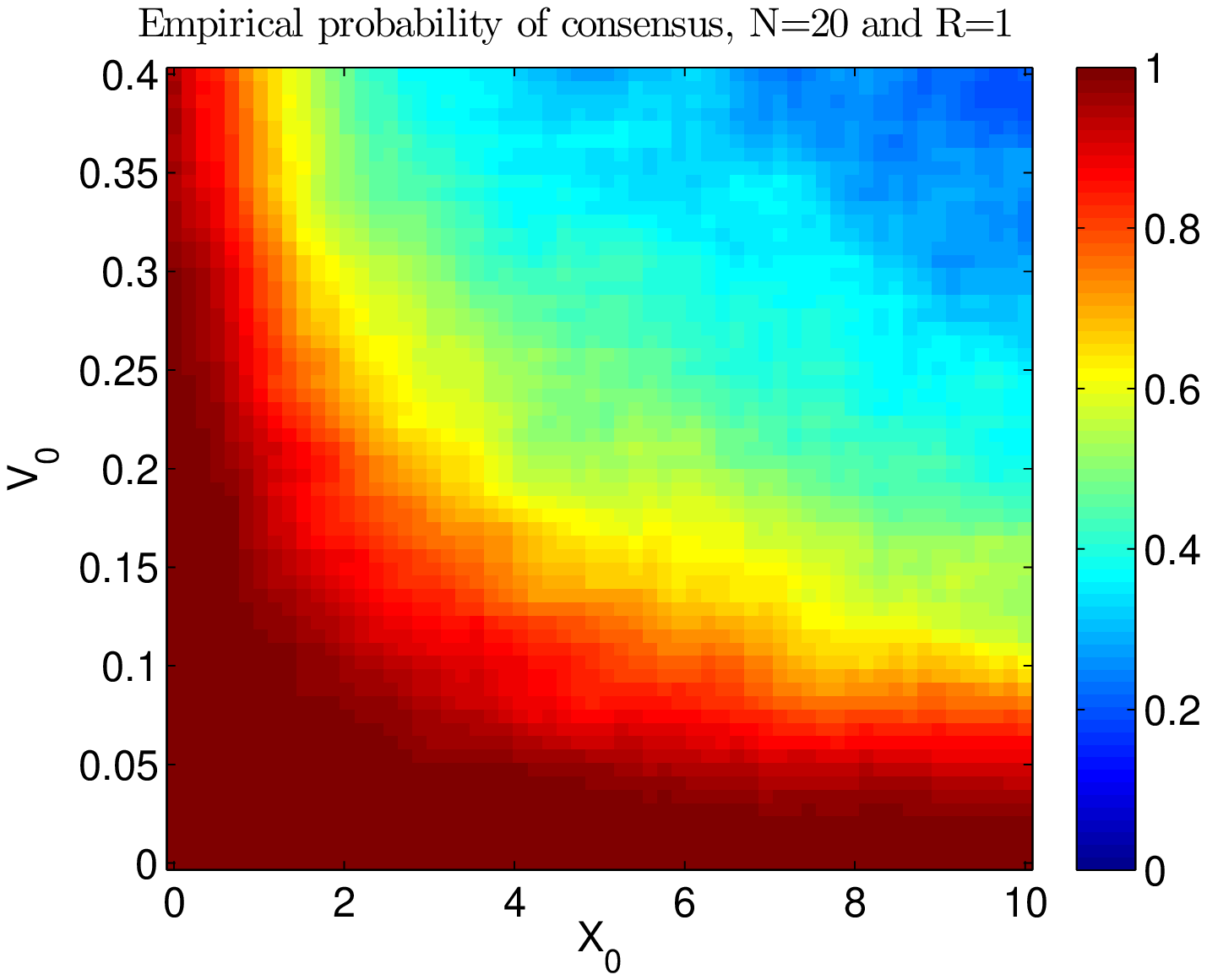,width=0.5\linewidth,clip=}&
\epsfig{file=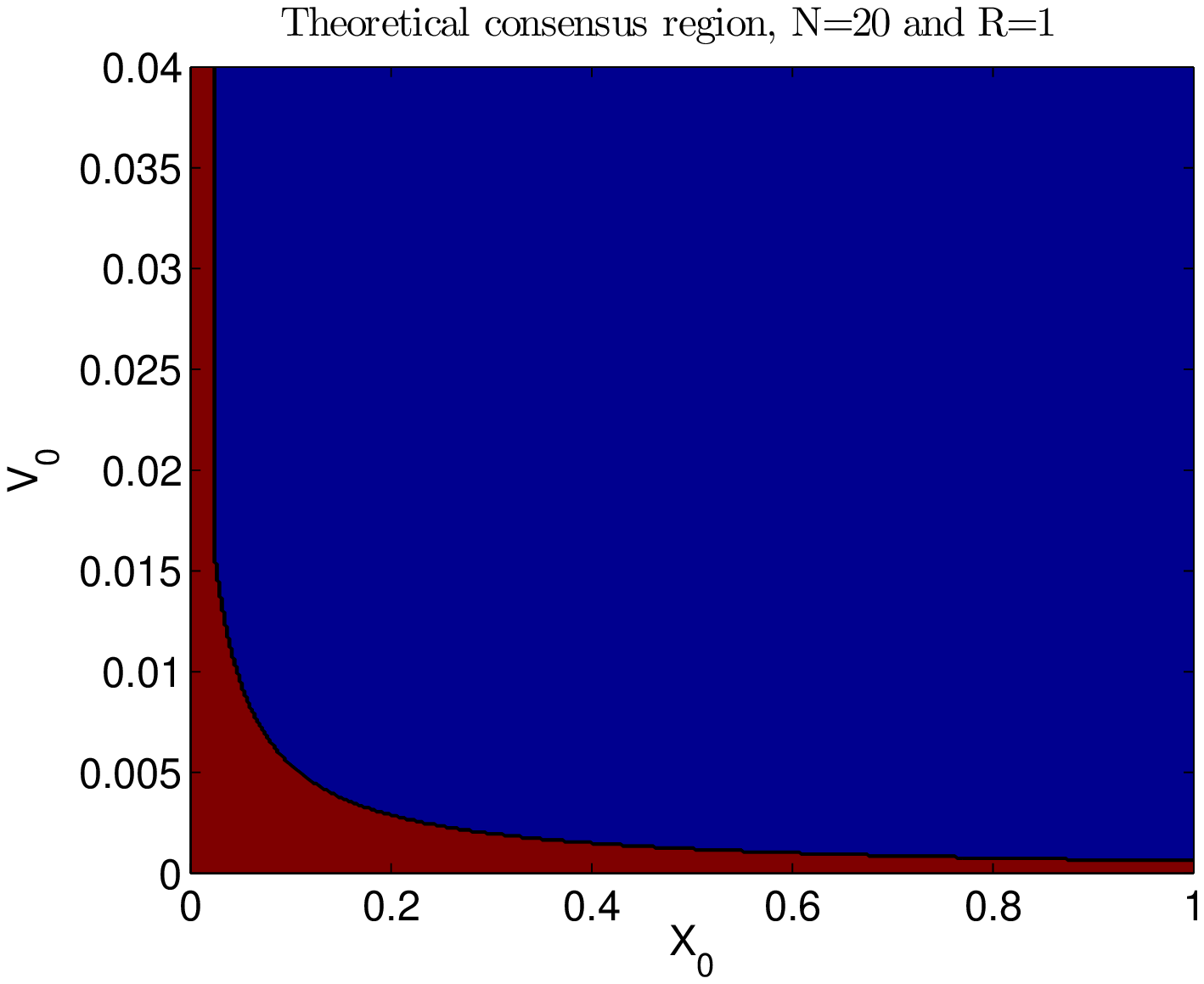,width=0.5\linewidth,clip=} \\
\epsfig{file=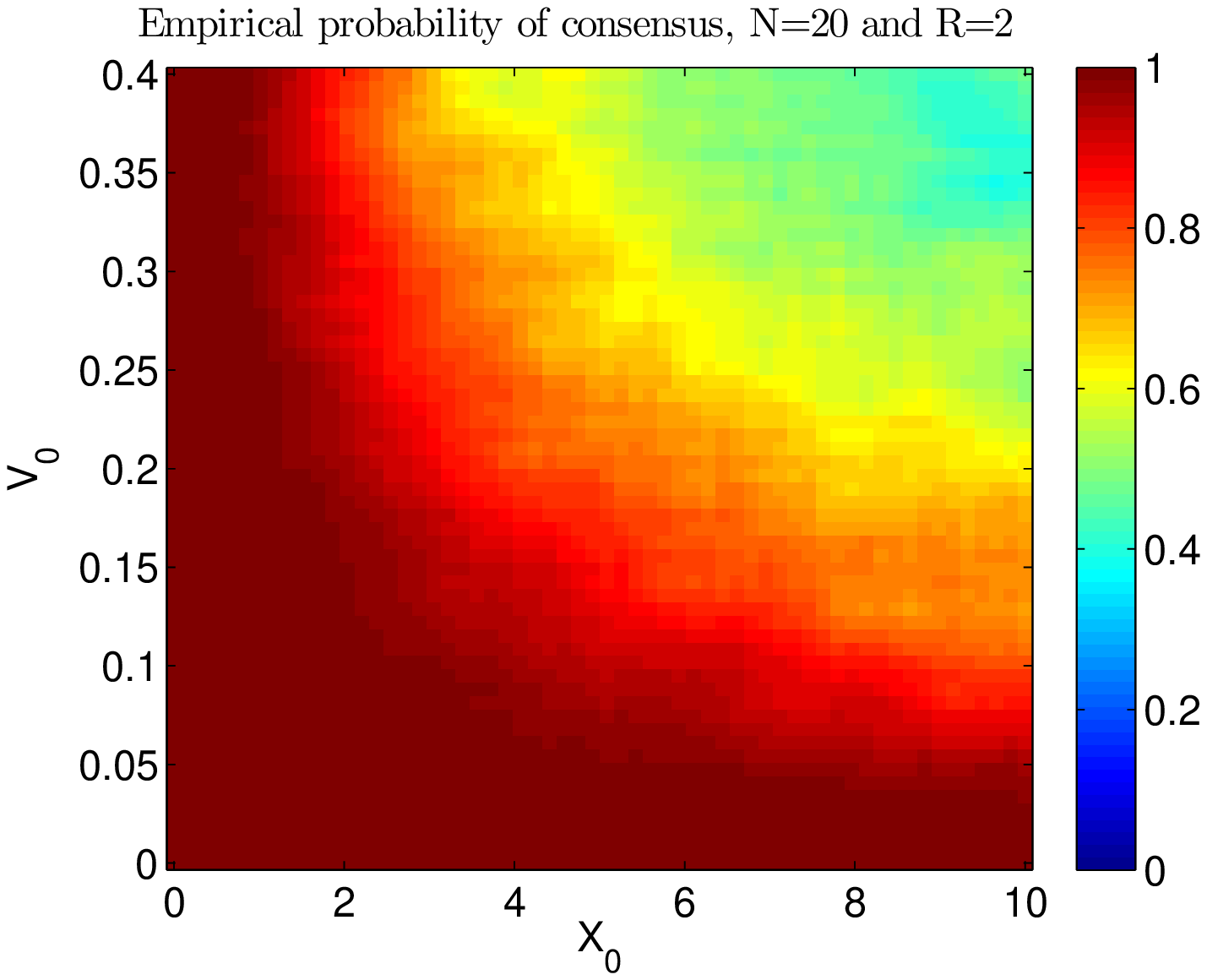,width=0.5\linewidth,clip=} &
\epsfig{file=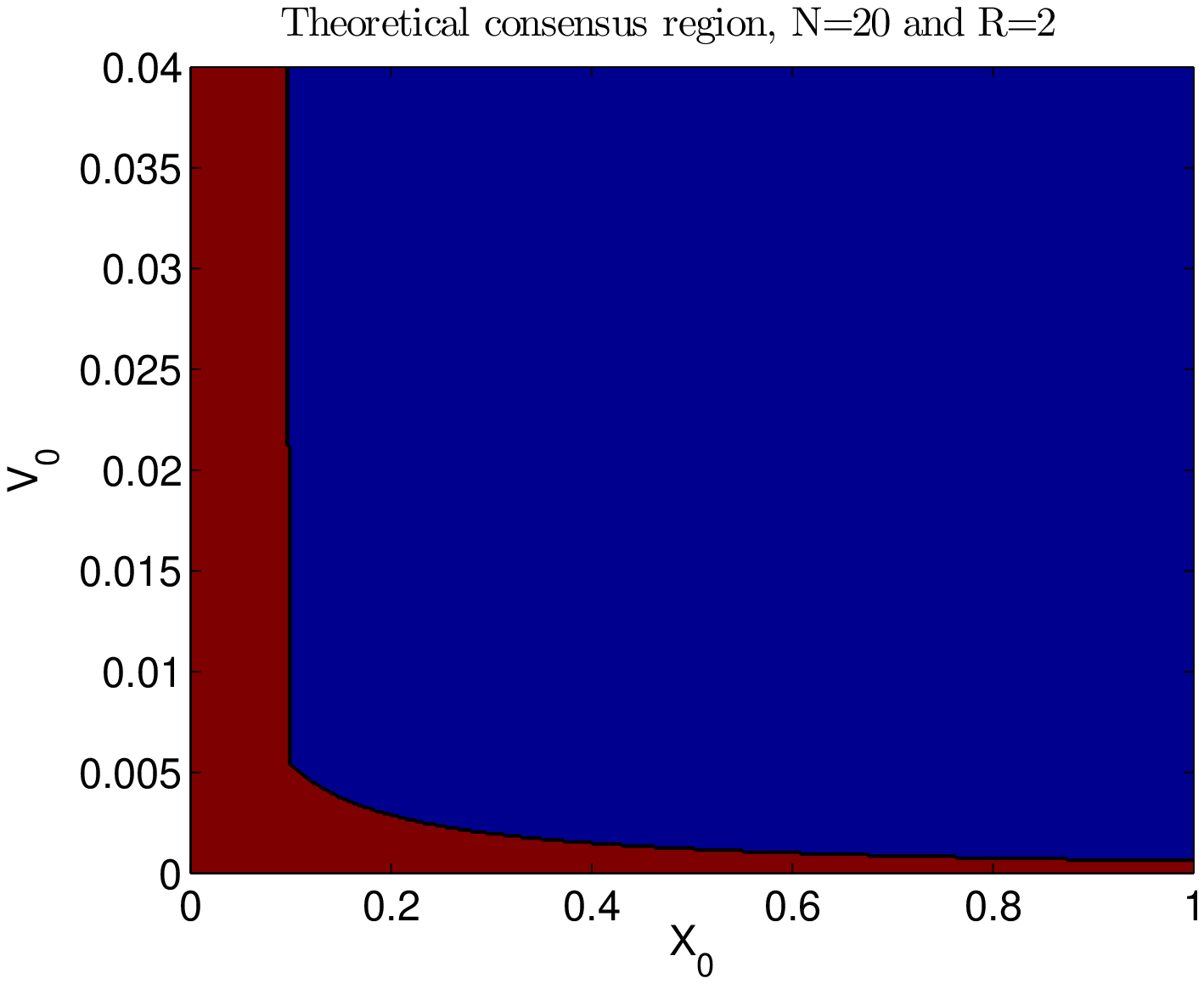,width=0.5\linewidth,clip=}
\end{tabular}}
\caption{Local feedback control. Empirical consensus regions and theoretical estimates for $N=20$ agents and different control radii $R$.}
\label{fig:5}
\end{figure}

\begin{figure}
\centering
\epsfig{file=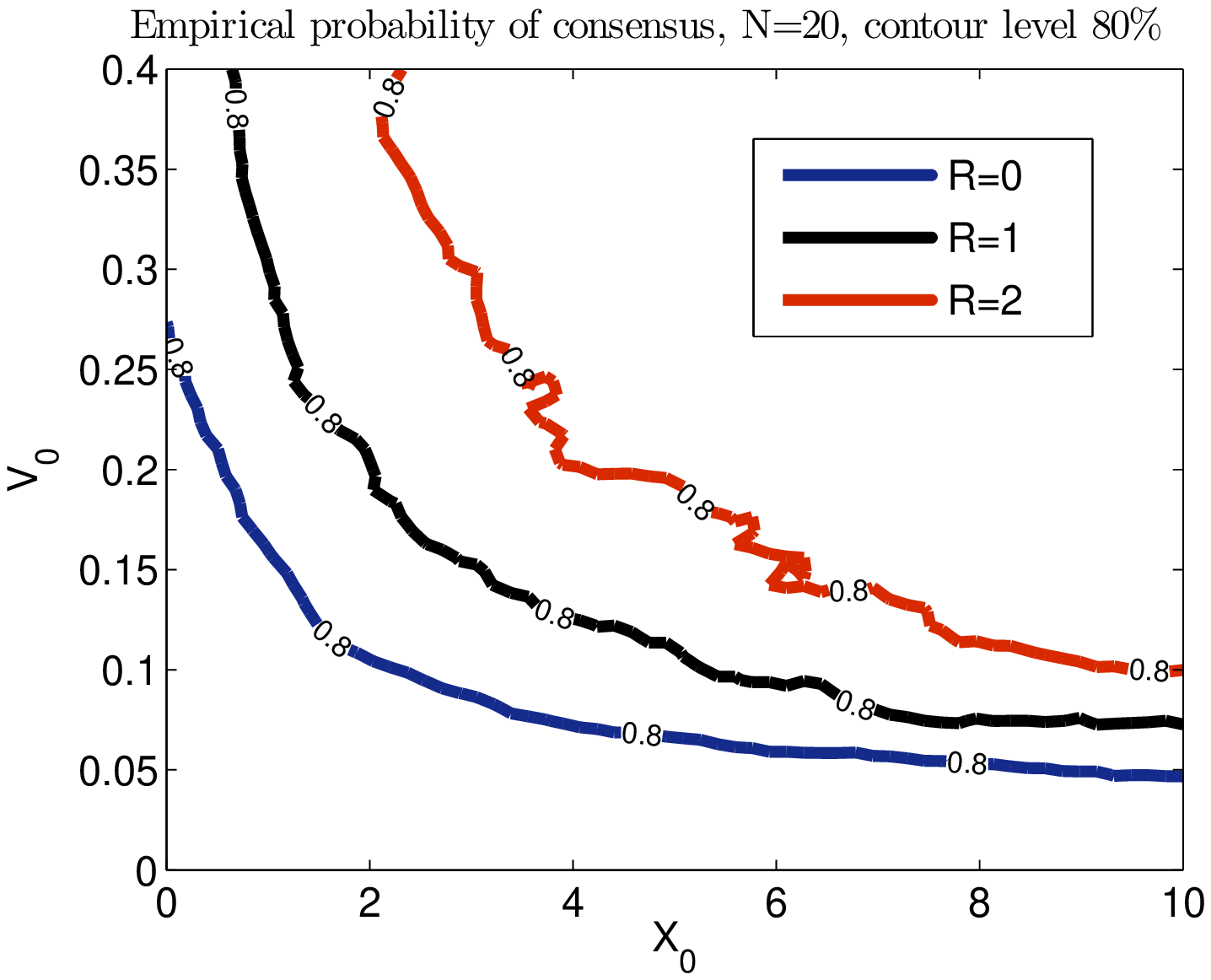,scale=0.5,clip=}
\caption{Local feedback control. Empirical contour lines for the 80$\%$ probability of consensus with different control radii.}
\label{fig:6}
\end{figure}

\section*{Concluding remarks and perspectives}
We have presented a set of feedback controllers for consensus emergence in nonlinear multi-agent systems of Cucker-Smale type. The proposed control designs address different situations concerning leader-following configurations, stabilization under perturbed information, and decentralized, local feedback control. In general, we characterize consensus emergence in every case, providing a coherent extension of the available results in the literature. Furthermore, numerical experiments assess the performance of the controllers in a consistent way. \\
Among possible future directions of research, let us mention that numerical evidence suggest that sharper consensus estimates should be possible to be derived, if the structure of the internal dynamics is more intensively used in the computations. Another natural extension of our work would be to consider the consensus emergence problem, under a decentralized control computed via an optimality-based approach.

\bibliographystyle{abbrv}
\bibliography{biblioflock}

\begin{thebibliography}{10}

\bibitem{Bakule}
L.~Bakule.
\newblock Decentralized control: An overview.
\newblock {\em Annual Reviews in Control}, 32(1):87 -- 98, 2008.

\bibitem{Ballerini}
M.~Ballerini, N.~Cabibbo, R.~Candelier, A.~Cavagna, E.~Cisbani, I.~Giardina,
  V.~Lecomte, A.~Orlandi, G.~Parisi, A.~Procaccini, M.~Viale, and
  V.~Zdravkovic.
\newblock Interaction ruling animal collective behavior depends on topological
  rather than metric distance: Evidence from a field study.
\newblock {\em Proceedings of the National Academy of Sciences},
  105(4):1232--1237, 2008.

\bibitem{CFPT}
M.~Caponigro, M.~Fornasier, B.~Piccoli, and E.~Tr\'elat.
\newblock {Sparse Stabilization and Control of Alignment Models}.
\newblock {\em to appear in Math. Mod. Meth. Appl. Sci. (M3AS), preprint:
  arXiv:1210.5739}, 2012.

\bibitem{cafotove10}
J.~A. Carrillo, M.~Fornasier, G.~Toscani, and F.~Vecil.
\newblock Particle, kinetic, and hydrodynamic models of swarming.
\newblock In G.~Naldi, L.~Pareschi, G.~Toscani, and N.~Bellomo, editors, {\em
  Mathematical Modeling of Collective Behavior in Socio-Economic and Life
  Sciences}, Modeling and Simulation in Science, Engineering and Technology,
  pages 297--336. Birkh{\"a}user Boston, 2010.

\bibitem{CuGu}
F.~{Cucker} and C.~{Huepe}.
\newblock {Flocking with informed agents.}
\newblock {\em {MathS In Action}}, 1(1):1--25, 2008.

\bibitem{CS}
F.~Cucker and S.~Smale.
\newblock Emergent behavior in flocks.
\newblock {\em IEEE Trans. Automat. Control}, 52(5):852--862, 2007.

\bibitem{cusm07}
F.~Cucker and S.~Smale.
\newblock {On the mathematics of emergence.}
\newblock {\em Jpn. J. Math.}, 2(1):197--227, 2007.

\bibitem{filipov}
A.~Filipov.
\newblock {\em {Differential equations with Discontinuous Righthand Sides}}.
\newblock Volume 18 of Mathematics and its Applications (Soviet Series). Kluwer
  Academic Publishers Group, Dordrecht, 1988.

\bibitem{FS13}
M.~Fornasier and F.~Solombrino.
\newblock {Mean-field optimal control}.
\newblock {\em to appear in ESAIM, Control Optim. Calc. Var., preprint:
  arXiv:1306.5913}, 2013.

\bibitem{hahakim10rayleigh}
S.-Y. Ha, T.~Ha, and J.-H. Kim.
\newblock Asymptotic dynamics for the cucker--smale-type model with the
  rayleigh friction.
\newblock {\em Journal of Physics A: Mathematical and Theoretical},
  43(31):315201, 2010.

\bibitem{HaHaKim}
S.-Y. Ha, T.~Ha, and J.-H. Kim.
\newblock Emergent behavior of a {C}ucker-{S}male type particle model with
  nonlinear velocity couplings.
\newblock {\em IEEE Trans. Automat. Control}, 55(7):1679--1683, 2010.

\bibitem{krause02}
R.~Hegselmann and U.~Krause.
\newblock Opinion dynamics and bounded confidence models, analysis, and
  simulation.
\newblock {\em Journal of Artificial Societies and Social Simulation}, 5(3),
  2002.

\bibitem{ignaciuk2013congestion}
P.~Ignaciuk and A.~Bartoszewicz.
\newblock {\em Congestion Control in Data Transmission Networks}.
\newblock Springer, 2013.

\bibitem{murray07}
R.~M. Murray.
\newblock Recent research in cooperative control of multivehicle systems.
\newblock {\em Journal of Dynamic Systems, Measurement, and Control},
  129(5):571--583, 2007.

\bibitem{olfati}
R.~Olfati-Saber and R.~Murray.
\newblock Consensus problems in networks of agents with switching topology and
  time-delays.
\newblock {\em IEEE Trans. Autom. Control}, 49(9):1520--1533, Sept 2004.

\bibitem{Peters}
A.~A. Peters, R.~H. Middleton, and O.~Mason.
\newblock Leader tracking in homogeneous vehicle platoons with broadcast
  delays.
\newblock {\em Automatica}, 50(1):64 -- 74, 2014.

\bibitem{Reynolds87}
C.~W. Reynolds.
\newblock Flocks, herds and schools: A distributed behavioral model.
\newblock {\em SIGGRAPH Comput. Graph.}, 21(4):25--34, Aug. 1987.

\bibitem{vics}
T.~Vicsek, A.~Czir\'ok, E.~Ben-Jacob, I.~Cohen, and O.~Shochet.
\newblock Novel type of phase transition in a system of self-driven particles.
\newblock {\em Phys. Rev. Lett.}, 75:1226--1229, Aug 1995.

\end{thebibliography}

\end{document}